\font\tencyr=wncyr10
\def\cyr{\cyracc\tencyr}
\newtheorem{theorem}{Theorem}[section]
\newtheorem{proposition}[theorem]{Proposition}
\newtheorem{lemma}[theorem]{Lemma}
\newtheorem{corollary}[theorem]{Corollary}
\theoremstyle{definition}
\newtheorem{definition}[theorem]{Definition}
\newtheorem{example}[theorem]{Example}
\newtheorem{remark}[theorem]{Remark}
\numberwithin{equation}{section}
\begin{document}

\title{Braided cofree Hopf algebras and quantum multi-brace algebras}

\author{Run-Qiang Jian}
\address{School of Computer Science, Dongguan University
of Technology, 1, Daxue Road, Songshan Lake, 523808, Dongguan, P.
R. China}
\email{jian@math.jussieu.fr}

\author{Marc Rosso}
\address{D\'{e}partement de Math\'{e}matiques, Universit\'{e} Paris
Diderot (Paris 7), 175, rue du Chevaleret, 75013, Paris, France}
\email{rosso@math.jussieu.fr}

\subjclass[2010]{Primary 17B37; Secondary 16T25 }

\date{}


\keywords{braided algebra, braided coalgebra, quantum multi-brace
algebra, quantum quasi-shuffle, 2-braided algebra.}

\begin{abstract}
We give a systematic construction of Hopf algebra structures on
braided cofree coalgebras. The relevant underlying structures are braided algebras and braided coalgebras. We provide some
interesting examples of these algebras and coalgebras related to
quantum groups. We introduce quantum multi-brace algebras which
are generalizations of both braided algebras and
$\textbf{B}_\infty$-algebras, as the natural framework. This new
subject enables one to quantize some important algebra structures
in a uniform way. Particular interesting examples are quantum
quasi-shuffle algebras.
\end{abstract}

\maketitle

\tableofcontents

\section{Introduction}
In \cite{LoR}, Loday and Ronco proved a classification theorem for
connected cofree bialgebras with analogues of the
Poincar\'{e}-Birkhoff-Witt theorem and of the Cartier-Milnor-Moore
theorem for non-cocommutative Hopf algebras. The main tool used is
the notion of $\textbf{B}_\infty$-algebra. This enables one to
investigate all associative algebra structures on $T(V)$
compatible with the deconcatenation coproduct. By using the
universal property of $T(V)$ with respect to the connected
coalgebra structure, the product can be rebuilt from the data of
some linear maps $M_{pq}: V^{\otimes p}\otimes V^{\otimes
q}\rightarrow V$ for $p,q\geq 0$. Conversely, one can construct an
associative algebra structure for such given maps under some
associativity conditions. Furthermore, with this algebra structure
and the deconcatenation coproduct, $T(V)$ becomes a bialgebra.

On the other hand, after the works on quantum groups which were
introduced by Drinfel'd \cite{D} and Jimbo \cite{J},
mathematicians began to be interested in subjects related to
braided categories. Besides the natural interest for mathematics
(see, e.g., \cite{Ka}, \cite{Tu} and the references therein), this
also brings many significant applications in mathematical physics,
for instance, in quantum field theory (see, e.g., \cite{BK} and
the references therein). For this purpose and the importance of
cofree Hopf algebras, we would like to study the braided version
of cofree Hopf algebra structures on $T(V)$. In order to do this,
we need to extend the notion of $\textbf{B}_\infty$-algebra to the
braided framework, where we use the braided coproduct instead of
the tensor deconcatenation coproduct of $T(V)\otimes T(V)$. In
contrast to the classical case, the structure map coming from
$\textbf{B}_\infty$-algebras with braided coproduct is not
associative in general. To overcome the problem, it requires some
compatibility conditions between the maps $M_{pq}$ and the
braiding. It leads to the definition of quantum multi-brace
algebras. With the product from quantum multi-brace algebra
structure, $T(V)$ becomes a "twisted" Hopf algebra in the sense of
\cite{Ro}. Quantum multi-brace algebras provide a systematic
construction of Hopf algebra structures on cofree braided
coalgebras.

Another motivation comes from works on multiple zeta values. They
led naturally to so-called quasi-shuffle algebras. Mainly, the
underlying vector space used to construct the shuffle algebra has
also an algebra structure. These algebras were first discovered by
Newman and Radford in \cite{NR}, and later studied by many
mathematicians in different aspects (see, e.g., \cite{EG},
\cite{Hof1}, \cite{Hof2}, \cite{IKZ}, \cite{Lod}, and the
references therein). For the reason mentioned in the preceding
paragraph, there were some attempts to quantize the quasi-shuffle
algebra, for instance, \cite{B} and \cite{Hof1}. We want to deform
quasi-shuffle algebras in the spirit of quantum shuffle algebras,
where the usual flip is replaced by a braiding. This way seems
more natural. But we have to impose compatibility between the
braiding and the algebra structure on the underlying vector space.
The quantum multi-brace algebras provide a good framework. At this
level, we obtain a natural framework for quantum quasi-shuffle
algebras, where the quantum multi-brace algebra structure has only
the $M_{11}$ term. It is valuable to mention that Hoffman's
q-deformation of quasi-shuffle product (\cite{Hof2}) is a special
case of quantum quasi-shuffle algebras.

Therefore, quantum multi-brace algebras allow one to quantize many
important algebra structures, such as shuffle algebras and
quasi-shuffle algebras, in a uniform way. The new object is not
just the generalization of $\textbf{B}_\infty$-algebras, but also
of braided algebras. As we know, braided algebras were introduced
in an explicit form by Baez in \cite{Ba1}, and Hashimoto and
Hayashi in \cite{HH} independently, where they were called
r-algebras and Yang-Baxter algebras respectively. These algebras
play an important role in braided categories. For instance, they
were used to construct braided Hochschild homologies (\cite{Ba2})
and they are the relevant structure between the braiding and the
multiplication in our construction of quantum quasi-shuffle
algebras. They also proved to be of interest in their own right
(see, e.g., \cite{AS}, \cite{AMS} and \cite{Ta}). But up to now,
there were few examples of these. Here we use quantum multi-brace
algebras to provide some. In particular, we show that the "upper
triangular part" of quantum groups are braided algebras.

This paper is organized as follows. In Section 2, we recall the
definitions of braided algebras and braided coalgebras. We also
study some of their properties. After recalling the construction
of braided algebras from Yetter-Drinfel'd modules with extra
natural conditions, we show that module-algebras (resp.
module-coalgebras) over a quasi-triangular Hopf algebra are
braided algebras (resp. coalgebras). Section 3 contains
interesting examples of braided algebras from quantum groups,
which are the so-called quantum shuffle algebras (introduced in
\cite{Ro}). We prove that the cotensor algebra $T_H^c(M)$ over a
Hopf algebra $H$ and an $H$-Hopf bimodule $M$ is both a braided
algebra and a braided coalgebra. As a consequence, the "upper
triangular part" $U_q^+$ of the quantized enveloping algebra with
a symmetrizable Cartan matrix is a braided algebra. In Section 4,
we define quantum multi-brace algebras and prove that their tensor
spaces have braided algebra structures. Quantum shuffle algebras
and quantum quasi-shuffle algebras are special quantum multi-brace
algebras. Finally, in Section 5, we introduce the notion of
2-braided algebras and use them to
construct quantum multi-brace algebras.\\

\noindent \textbf{Notation.} In this paper, we denote by $K$ a
ground field of characteristic 0. All the objects we discuss are
defined over $K$.

Let $(H,\bigtriangleup,\varepsilon, S)$ be a Hopf algebra. As
usual, we denote $\bigtriangleup^{(1)}=\bigtriangleup$ and
$\bigtriangleup^{(n)}=(\bigtriangleup^{(n-1)}\otimes
\mathrm{id}_H)\bigtriangleup$ for $n\geq 2$. We adopt Sweedler's
notation for coalgebras and comodules: for any $h\in H$,
$$\bigtriangleup(h)=\sum_{(h)}h_{(1)}\otimes h_{(2)},$$
and for a left $H$-comodule $(M,\rho)$ and any $ m\in M$,
$$\rho(m)=\sum_{(m)}m_{(-1)}\otimes m_{(0)},$$ where the part
$m_{(-1)}$ lies in $H$ and the part $m_{(0)}$ lies in $M$.

The symmetric group of n letters $\{1,2,\ldots,n\}$ is written by
$\mathfrak{S}_{n}$.

A braiding $\sigma$ on a vector space $V$ is an invertible linear
map in $\mathrm{End}(V\otimes V)$ satisfying the braid relation on
$V^{\otimes 3}$:
$$(\sigma \otimes
\mathrm{id}_V)(\mathrm{id}_V\otimes\sigma)(\sigma\otimes
\mathrm{id}_V)=(\mathrm{id}_V\otimes\sigma)(\sigma\otimes
\mathrm{id}_V)(\mathrm{id}_V\otimes\sigma).$$ A braided vector
space $(V,\sigma)$ is a vector space $V$ equipped with a braiding
$\sigma$. For any $n\in \mathbb{N}$ and $1\leq i\leq n-1$, we
denote by $\sigma_i$ the operator $\mathrm{id}_V^{\otimes
i-1}\otimes \sigma\otimes \mathrm{id}_V^{\otimes n-i-1}\in
\mathrm{End}(V^{\otimes n})$. For any $w\in \mathfrak{S}_{n}$, we
denote by $T_w^\sigma$ the corresponding lift of $w$ in the braid
group $B_n$, defined as follows: if $w=s_{i_1}\cdots s_{i_l}$ is
any reduced expression of $w$, where $s_{i}=(i,i+1)$, then
$T_w^\sigma=\sigma_{i_1}\cdots \sigma_{i_l}$. Sometimes we use
$T_w$ instead of $T_w^\sigma$ if there is no ambiguity.

For a vector space $V$, we denote by $\otimes$ the tensor product
within $T(V)$, and by $\underline{\otimes}$ the one between $T(V)$
and $T(V)$.

\section{Braided algebras and braided coalgebras}
We start by recalling the definitions of braided algebras and
braided coalgebras. In the following, algebras are always assumed
to be associative and unital, and coalgebras are always assumed to
be coassociative and counital.

\begin{definition}[\cite{Ba1}, \cite{HH}]1. Let $A=(A,m,\eta)$ be an algebra with product $m$ and unit $\eta$. Let $\sigma$ be a braiding on $A$. We call $(A,m,\sigma)$ a \emph{braided algebra} if the following diagram is commutative:
\[\begin{CD}
A^{\otimes 3} @>\sigma_1\sigma_2 >>A^{\otimes 3}@>\sigma_2\sigma_1 >> A^{\otimes 3}\\
@VV m\otimes \mathrm{id}_A V @VV\mathrm{id}_A\otimes m V @VV m\otimes \mathrm{id}_A V \\
A^{\otimes 2} @>\sigma >>A^{\otimes
2}@>\sigma >>A^{\otimes 2}\\
@AA\eta\otimes \mathrm{id}_{A} A @AA\mathrm{id}_{A}\otimes \eta A @AA \eta\otimes \mathrm{id}_{A} A \\
K\otimes A @>\simeq>>A\otimes K@>\simeq>> K\otimes A.
\end{CD}\]

2. Let $C=(C,\bigtriangleup ,\varepsilon)$ be a coalgebra with
coproduct $\bigtriangleup$ and counit $\varepsilon$. Let $\sigma$
be a braiding on $C$. We call $(C,\bigtriangleup,\sigma)$ a
\emph{braided coalgebra} if the following diagram is commutative:
\[\begin{CD}
C^{\otimes 3} @>\sigma_1\sigma_2>>C^{\otimes 3}@>\sigma_2\sigma_1 >> C^{\otimes 3}\\
@AA \bigtriangleup\otimes \mathrm{id}_C A @AA\mathrm{id}_C\otimes \bigtriangleup A @AA \bigtriangleup\otimes \mathrm{id}_C A \\
C^{\otimes 2} @>\sigma >>C^{\otimes
2}@>\sigma >>C^{\otimes 2}\\
@VV\varepsilon\otimes \mathrm{id}_C V @VV\mathrm{id}_C\otimes
\varepsilon V @VV \varepsilon \otimes \mathrm{id}_C V
\\K\otimes C@>\simeq>>C\otimes
K@>\simeq>> K\otimes C.
\end{CD}\]\end{definition}

These definitions give an appropriate way to extend the usual
algebra (resp. coalgebra) structure on the tensor products of
algebras (resp. coalgebras) in braided categories.

\begin{proposition}[\cite{HH}, Proposition 4.2]1. For a braided algebra $(A,m,\sigma)$ and any $i\in \mathbb{N}$, the braided vector space $(A^{\otimes i}, T^\sigma_{\chi_{ii}})$ becomes a braided algebra with product $m_{\sigma,i}=m^{\otimes i}\circ T^\sigma_{w_i}$ and unit $\eta^{\otimes i}:K\simeq K^{\otimes i}\rightarrow A^{\otimes i}$, where $\chi_{ii},w_i\in \mathfrak{S}_{2i}$ are given by \[\chi_{ii}=\left(\begin{array}{cccccccc}
1&2&\cdots&i&i+1&i+2&\cdots & 2i\\
i+1&i+2&\cdots&2i&1& 2 &\cdots & i
\end{array}\right),\] and \[w_{i}=\left(\begin{array}{ccccccccc}
1&2&3&\cdots&i&i+1&i+2&\cdots & 2i\\
1&3&5&\cdots&2i-1&2& 4 &\cdots & 2i
\end{array}\right).\]

2. For a braided coalgebra $(C,\bigtriangleup,\sigma)$, the
braided vector space $(C^{\otimes i}, T^\sigma_{\chi_{ii}})$
becomes a braided coalgebra with coproduct
$\bigtriangleup_{\sigma,i}=
T^\sigma_{w_i^{-1}}\circ\bigtriangleup^{\otimes i}$ and counit
$\varepsilon^{\otimes i}:C^{\otimes i}\rightarrow K^{\otimes
i}\simeq K$.\end{proposition}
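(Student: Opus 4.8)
The plan is to unpack the statement into its three components and verify each: that $T^\sigma_{\chi_{ii}}$ is a braiding on $A^{\otimes i}$ (i.e.\ satisfies the quantum Yang--Baxter equation on $A^{\otimes 3i}$), that $(A^{\otimes i},m_{\sigma,i},\eta^{\otimes i})$ is an associative unital algebra, and that the diagram of Definition~2.1 commutes for this data. Conceptually, the first point of the statement already pins down the organizing principle: all operators in sight are of the form $m^{\otimes r}\circ T^\sigma_w$ for a suitable $r$ and a suitable $w$ in a symmetric group, and two such operators coincide once (a) the underlying braid-group elements $T_w$ agree, which one checks on reduced words using that $T_w$ is well defined, and (b) one is permitted to slide an $m$ past a $\sigma$, which is exactly what commutativity of the left- and right-hand squares of the YB algebra diagram for $(A,\sigma)$ provides. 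In fact these observations amount to saying that ``$A$ is a YB algebra'' is precisely the assertion that $m,\eta,\sigma$ satisfy the finitely many relations that hold for an algebra object in a braided monoidal category (naturality of the braiding with respect to $m$ and $\eta$, together with the hexagons $c_{A\otimes A,A}=\sigma_1\sigma_2$ and $c_{A,A\otimes A}=\sigma_2\sigma_1$), so that the whole proof may be carried out as the standard braided-category computation, just performed without assuming an ambient category.

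First I would record the braid-theoretic input. The $i$-fold cabling homomorphism $B_3\to B_{3i}$, replacing each of the three strands by a bundle of $i$ parallel strands, sends the two generators of $B_3$ to $T^\sigma_{\chi_{ii}}\otimes\mathrm{id}_{A^{\otimes i}}$ and $\mathrm{id}_{A^{\otimes i}}\otimes T^\sigma_{\chi_{ii}}$ in $\mathrm{End}(A^{\otimes 3i})$ (here $\chi_{ii}$ is exactly the block transposition, and because its length $i^2$ in $\mathfrak{S}_{2i}$ equals the crossing number of the cabled generator, the cable really is the canonical positive lift $T^\sigma_{\chi_{ii}}$). Since $\sigma$ satisfies the quantum Yang--Baxter equation, so does its cabling, which is precisely the assertion that $T^\sigma_{\chi_{ii}}$ is a braiding on $A^{\otimes i}$; invertibility is inherited from $\sigma$. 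For the product, unwinding the standard definition of the multiplication on a tensor product of algebras in the braided setting (iterated $i-1$ times) shows that the candidate multiplication is $m^{\otimes i}$ precomposed with the braid that interleaves $a_1\otimes\cdots\otimes a_i$ with $b_1\otimes\cdots\otimes b_i$ into $a_1\otimes b_1\otimes\cdots\otimes a_i\otimes b_i$, and that interleaving braid is exactly $T^\sigma_{w_i}$, where $w_i$ is the interleaving permutation of the statement.

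Associativity is the heart of the matter. Both $m_{\sigma,i}\circ(m_{\sigma,i}\otimes\mathrm{id})$ and $m_{\sigma,i}\circ(\mathrm{id}\otimes m_{\sigma,i})$ are maps $A^{\otimes 3i}\to A^{\otimes i}$; expanding each and using associativity of $m$ one writes both as $m^{\otimes i}$ composed with a priori distinct elements of $B_{3i}$, and one then uses the two squares of the YB algebra diagram for $(A,\sigma)$ repeatedly to commute the inner $m$'s past the braidings until both expressions reach a common normal form $m^{\otimes i}\circ T^\sigma_w$. The unit axioms reduce, via the two $\eta$-squares of that diagram (which say $\sigma(1_A\otimes a)=a\otimes 1_A$ and $\sigma(a\otimes 1_A)=1_A\otimes a$), to the short check that inserting a block of $i$ copies of $1_A$ on either side and then applying $m^{\otimes i}\circ T^\sigma_{w_i}$ returns the identity. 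Finally the YB algebra compatibility diagram for $(A^{\otimes i},T^\sigma_{\chi_{ii}},m_{\sigma,i})$ is verified the same way: each square becomes an equality of two words in the $\sigma$'s and $m$'s, and the quantum Yang--Baxter equation for $\sigma$ together with the two compatibility squares for $(A,\sigma)$ transform one word into the other.

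The coalgebra part is strictly dual: reverse every arrow, replace $(m,\eta)$ by $(\bigtriangleup,\varepsilon)$ and $w_i$ by $w_i^{-1}$ (so $T^\sigma_{w_i^{-1}}$ dis-interleaves), and coassociativity, the counit axioms, and the YB coalgebra diagram follow from coassociativity of $\bigtriangleup$, the counit property, and the quantum Yang--Baxter equation by the arrow-reversed version of the argument above, with no new ideas. The main obstacle throughout is purely combinatorial bookkeeping: organizing the many applications of the braid relations and of the two ``$\sigma$-through-$m$'' moves so that the two sides of each identity are visibly brought to the same reduced form. It is probably cleanest to perform the computation in full only for $i=2$ and then observe that the construction is compatible with iteration --- applying it with parameter $k$ to $(A^{\otimes j},T^\sigma_{\chi_{jj}})$ recovers $A^{\otimes jk}$ with exactly the braiding $T^\sigma_{\chi_{jk,jk}}$ and product $m_{\sigma,jk}$, the relevant lengths adding up to $\binom{jk}{2}$ --- but since the case $i=2$ already exhibits every braid-word manipulation that is needed, the general case is no harder in principle, only longer.
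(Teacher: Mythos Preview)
The paper does not prove this proposition at all: it is quoted verbatim from \cite{HH} (their Proposition~4.2) and no argument is given. So there is nothing to compare your proposal against in the paper itself.

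Your outline is essentially the standard argument and is correct in spirit. The key points---that $T^\sigma_{\chi_{ii}}$ arises as the $i$-fold cabling of $\sigma$ and hence satisfies the Yang--Baxter equation, that $m_{\sigma,i}$ is the iterated braided tensor product multiplication, and that all required identities reduce via the two ``slide $m$ past $\sigma$'' moves of Definition~2.1 to equalities of braid words---are exactly right. Your remark that the YB algebra axioms are precisely the naturality and hexagon relations for an algebra object in a braided category is the conceptual core: once said, the proposition is just the statement that tensor powers of algebra objects are again algebra objects, which is automatic in any braided monoidal category. The one place to be a little careful is your proposed iteration strategy: to pass from $i=2$ to general $i$ by composing, you must check that applying the $i=2$ construction to $(A^{\otimes j},T^\sigma_{\chi_{jj}})$ really reproduces $m_{\sigma,2j}$ and $T^\sigma_{\chi_{2j,2j}}$ on the nose (this is a length-additivity check on the permutations $w_i$ and $\chi_{ii}$), but this is straightforward and your length counts are correct.
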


\begin{remark}1. Any algebra (resp. coalgebra) is a braided algebra (resp. coalgebra) with the usual flip.

2. If $(A,m, \sigma)$ is a braided algebra, then so is
$(A,m,\sigma^{-1})$. Similarly, if $(C,\bigtriangleup,\sigma)$ is
a braided coalgebra, then so is $(C,\bigtriangleup,\sigma^{-1})$.

3. Let $<,>: V\times W\rightarrow K$ and
$<,>^\prime:V^\prime\times W^\prime\rightarrow K$ be two bilinear
non-degenerate forms on vector spaces. For any $f\in
\mathrm{Hom}(V, V^\prime)$, the adjoint operator
$\mathrm{adj}(f)\in \mathrm{Hom}(W^\prime, W)$ of $f$ is defined
to be the one such that $<x,\mathrm{adj}(f)(y)>=<f(x),y>^\prime$
for any $x\in V$ and $y\in W^\prime$. If $(A,m,\eta, \sigma)$ is a
braided algebra, then its adjoint
$(B,\mathrm{adj}(m),\mathrm{adj}(\eta), \mathrm{adj}(\sigma))$ is
a braided coalgebra. A similar statement for braided coalgebras
holds. This indicates some sort of duality between braided
algebras and braided coalgebras.
\end{remark}

The braided algebra and braided coalgebra structures given by
Remark 2.3.1 are trivial. We give nontrivial examples by using
braided vector spaces as follows.

Let $(V,\sigma)$ be a braided vector space. For any $i,j\geq 1$,
we denote\[\chi_{ij}=\left(\begin{array}{cccccccc}
1&2&\cdots&i&i+1&i+2&\cdots & i+j\\
j+1&j+2&\cdots&j+i&1& 2 &\cdots & j
\end{array}\right),\] and define $\beta:T(V)\underline{\otimes} T(V)\rightarrow T(V)\underline{\otimes} T(V)$ by requiring that $\beta_{ij}=T^\sigma_{\chi_{ij}}$ on $V^{\otimes i}\underline{\otimes} V^{\otimes
j}$. For convenience, we denote by $\beta_{0i}$ and $\beta_{i0}$
the usual flip map.

It is easy to see that $\beta$ is a braiding on $T(V)$ and $(T(V),
m, \beta)$ is a braided algebra, where $m$ is the concatenation
product. The algebra $(T(V), m, \beta)$ has a sort of universal
property in the category of braided algebras (see \cite{AMS},
Theorem 1.17 ).

We define $\delta$ to be the deconcatenation on $T(V)$, i.e.,
$$\delta(v_1\otimes\cdots\otimes v_n)=\sum_{i=0}^n(v_1\otimes\cdots\otimes v_i)\underline{\otimes }(v_{i+1}\otimes\cdots\otimes
v_n).$$ We denoted by $T^c(V)$ the coalgebra $(T(V),\delta)$. This
coalgebra is cofree among connected coalgebras. For more
information, one can see \cite{LoR}.

The coalgebra $T^c(V)$ is the dual construction of $(T(V), m)$. So
$(T^c(V),\beta)$ is a braided
 coalgebra.

Now we recall the construction of braided algebras and braided
coalgebras in the category of Yetter-Drinfel'd modules.

Recall that a triple $(V,\cdot, \rho)$ is a \emph{(left)
Yetter-Drinfel'd module} over a Hopf algebra $H$ if $(V,\cdot)$ is
a left $H$-module, $(V,\rho)$ is a left $H$-comodule, and for any
$h\in H$ and $v\in V$, $$\sum h_{(1)}v_{(-1)}\otimes h_{(2)}\cdot
v_{(0)}=\sum (h_{(1)}\cdot v)_{(-1)}h_{(2)}\otimes (h_{(1)}\cdot
v)_{(0)}.
$$
The category of Yetter-Drinfel'd modules over $H$, denoted
$^H_H\mathcal{YD}$, is a braided tensor category (for the
definition, see, e.g., \cite{Ka}). Given two objects $V,W$ in
$^H_H\mathcal{YD}$, the commutativity constraint $c_{V,W}$
associated to $V$ and $W$ is given by $c_{V,W}(v\otimes w)=\sum
v_{(-1)}\cdot w\otimes v_{(0)}$ , for any $ v\in V, w\in W$.

An algebra $(A,m,1)$ is said to be in $^H_H\mathcal{YD}$ if $A$ is
an object in $^H_H\mathcal{YD}$, and the multiplication $m$ and
the unit map are morphisms in $^H_H\mathcal{YD}$. That means
$(A,m,1)$ is both a comodule-algebra and a module-algebra. There
is a dual description of coalgebras. A coalgebra
$(C,\bigtriangleup,\varepsilon)$ is said to be in
$^H_H\mathcal{YD}$ if $C$ is an object in $^H_H\mathcal{YD}$, and
the coproduct $\bigtriangleup$ and the counit $\varepsilon$ are
morphisms in $^H_H\mathcal{YD}$. That means
$(C,\bigtriangleup,\varepsilon)$ is both a comodule-coalgebra and
module-coalgebra. One has the following proposition immediately
(see, e.g., \cite{Ta}).

\begin{proposition}1. If $(A,m,1)$ is an algebra in $^H_H\mathcal{YD}$, then $(A, m,c_{A,A})$ is a braided algebra.

2. If $(C,\bigtriangleup,\varepsilon)$ is a coalgebra in
$^H_H\mathcal{YD}$, then $(C,\bigtriangleup, c_{C,C})$ is a
braided coalgebra.\end{proposition}

Moreover, we have that

\begin{proposition}Let $V$ and $W$ be Yetter-Drinfel'd modules over $H$.

1 If both $V$ and $W$ are module-algebras and comodule-algebras.
Then $(V\otimes W, c_{V\otimes W,V\otimes W})$ is a braided
algebra with the following product: for any $v,v^\prime\in V$ and
$w,w^\prime\in W$,
$$(v\otimes w)\star(v^\prime \otimes w^\prime)=\sum
v(w_{(-1)}\cdot v^\prime)\otimes w_{(0)}w^\prime.$$

2 If both $V$ and $W$ are module-coalgebras and
comodule-coalgebras. Then $(V\otimes W, c_{V\otimes W,V\otimes
W})$ is a braided coalgebra with the following coproduct: for any
$v,v^\prime\in V$,
$$\bigtriangleup(v\otimes
w)=\sum_{(v),(w)}v^{(1)}\otimes(v^{(2)})_{(-1)}\cdot
w^{(1)}\otimes (v^{(2)})_{(0)}\otimes w^{(2)}.$$
Here, for avoiding the ambiguity, we denote $\bigtriangleup(v)=\sum_{(v)}v^{(1)}\otimes v^{(2)}$ and $\bigtriangleup(w)=\sum_{(w)}w^{(1)}\otimes w^{(2)}$.\end{proposition}

The product and coproduct introduced in the above proposition are
the generalizations of smash products and smash coproducts
respectively. This is related to some work of Lambe and Radford
(\cite{LR}, pp. 115-119), but without considering the notion of
braided algebras.

\begin{example} [Woronowicz's braiding]For any Hopf algebra $(H, m, \eta, \bigtriangleup,
\varepsilon,S)$, Woronowicz \cite{Wo2} constructed two braidings
on $H$: for any $a,b\in H$,
\begin{eqnarray*}
T_H(a\otimes b)&=&\sum_{(b)}b_{(2)}\otimes
aS(b_{(1)})b_{(3)},\\
T_H^\prime(a\otimes b)&=&\sum_{(b)}b_{(1)}\otimes
S(b_{(2)})ab_{(3)}.\end{eqnarray*}

We consider $H^{op}=(H, m\circ \tau, \eta, \bigtriangleup,
\varepsilon,S^{-1})$ and $H^{cop}=(H, m, \eta, \tau\circ
\bigtriangleup, \varepsilon,S^{-1})$. Denote
$F_{H}=T^{-1}_{H^{op}}$ and
$F_{H}^\prime=(T^\prime_{H^{cop}})^{-1}$, then
\begin{eqnarray*}F_H(a\otimes b)&=&\sum_{(a)}a_{(1)}S(a_{(3)})b\otimes a_{(2)}
,\\
F_H^\prime(a\otimes b)&=&\sum_{(a)}a_{(1)}b S(a_{(2)})\otimes
a_{(3)}.\end{eqnarray*}

It is well-known that $H$ is a Yetter-Drinfel'd module over itself
with the following structures: for any $x,h\in H$,
\[\left\{\begin{array}{lll}
x\cdot h&=&\sum_{(x)}x_{(1)}h S(x_{(2)}),\\[5pt]
\rho(h)&=&\sum_{(h)}h_{(1)}\otimes h_{(2)}.\\
\end{array} \right.
\]
It is easy to check that $H$ is a module-algebra and a
comodule-algebra with these structures. The braiding from
Yetter-Drinfel'd module structure is just $F^\prime$. So $(H,m,
F^\prime)$ is a braided algebra.

Dually, $H$ has also the following Yetter-Drinfel'd module
structure: for any $x,h\in H$,
\[\left\{\begin{array}{lll}
x\cdot h&=&xh,\\[5pt]
\rho(h)&=&\sum_{(h)}h_{(1)}S(h_{(3)})\otimes h_{(2)}.\\
\end{array} \right.
\]
It is easy to check that $H$ is a module-coalgebra and a
comodule-coalgebra with these structures. The braiding from
Yetter-Drinfel'd module structure is just $F$. So
$(H,\bigtriangleup,F)$ is a braided coalgebra.
\end{example}

In the rest of this section, we focus on the category of
Yetter-Drinfel'd modules over a special kind of Hopf algebras--the
quasi-triangular Hopf algebra (for definition, see \cite{D} or
\cite{Ka}).

Let $(H, \mathcal{R})$ be a quasi-triangular Hopf algebra with
R-matrix $\mathcal{R}=\sum_i s_i\otimes t_i\in H\otimes H$.

For any $H$-module $M$, we define $\rho: M\rightarrow H\otimes M$
by $\rho(m)=\sum_i t_i\otimes s_i\cdot m$. Then $(M,\cdot,\rho)$
is a Yetter-Drinfel'd module over $H$ and the braiding $\sigma_M$
is just the action of the $R$-matrix of $H$ (see, e.g.,
\cite{CMZ}).
\begin{theorem}Under the assumptions above, if $(A, m)$ is a module-algebra over $(H, \mathcal{R})$, then $(A,m,\sigma_A)$ is a braided algebra.\end{theorem}
\begin{proof}We only need to check that $A$ is also a comodule-algebra. Notice that the R-matrix $\mathcal{R}$ satisfies $(\bigtriangleup\otimes
\mathrm{id})(\mathcal{R})=\mathcal{R}_{13}\mathcal{R}_{23}$, i.e.,
$$\sum_i \bigtriangleup (s_i)\otimes t_i=\sum_{k,l}s_k\otimes
s_l\otimes t_k t_l.$$Hence $$\sum_i\sum_{(s_i)} t_i\otimes
(s_i)_{(1)}\otimes (s_i)_{(2)}=\sum_{k,l}t_k t_l\otimes s_k\otimes
s_l.$$For any $a,b\in A$, we have
\begin{eqnarray*}
\sum_{(ab)} (ab)_{(-1)}\otimes (ab)_{(0)}&=&\sum_i t_i\otimes
s_i\cdot (ab)\\
&=&\sum_{i,(s_i)}t_i\otimes \big((s_i)_{(1)}\cdot
a\big)\big((s_i)_{(2)}\cdot
b\big)\\
&=&\sum_{k,l}t_k t_l\otimes (s_k\cdot a)( s_l\cdot b)\\
&=&\sum_{(a),(b)}a_{(-1)}b_{(-1)}\otimes a_{(0)}b_{(0)}.
\end{eqnarray*}
Finally,
\begin{eqnarray*}
\rho(1_A)&=&\sum_i t_i\otimes s_i\cdot 1_A\\
&=&\sum_i\varepsilon(s_i)t_i\otimes 1_A\\
&=&1_H\otimes 1_A,\end{eqnarray*} where the last equality follows
from the fact $(\varepsilon\otimes
\mathrm{id})(\mathcal{R})=1$.\end{proof}

\begin{theorem}With the assumptions above, if $(C,\bigtriangleup)$ is a module-coalgebra over $(H, \mathcal{R})$, then $(C,\bigtriangleup,\sigma_C)$ is a braided coalgebra.\end{theorem}
\begin{proof}It follows from a direct computation in some spirit as the preceding one.\end{proof}

\section{Examples related to quantum groups}
For the relation between quantum groups and braidings, one would
expect there are some examples of braided algebras coming from
quantum groups. In this section, we prove that the upper
triangular part of a quantum group makes sense by using the result
about quantum shuffles in \cite{Ro}.

 For a Yetter-Drinfel'd module $V$ which is
both a module-algebra and a comodule-algebra, $V^{\otimes i}$ is a
braided algebra for each $i$ by Proposition 2.2. One can have
another interesting example of braided algebras as follows, which
will be generalized for any braided vector space later.

We first recall some terminologies. An $(i,j)$-shuffle is an
element $w\in \mathfrak{S}_{i+j}$ such that $w (1) < \cdots <w
(i)$ and $w (i+1) < \cdots <w (i+j)$. We denote by
$\mathfrak{S}_{i,j}$ the set of all $(i,j)$-shuffles.

Let $V$ be a Yetter-Drinfel'd module over a Hopf algebra $H$ with
the natural braiding $\sigma$. In \cite{Ro}, the following
associative product on $T(V)$ was constructed (in fact, the
construction works for any braided vector space, one can see
\cite{FG}): for any $x_1,\ldots, x_{i+j}\in V$,
\begin{equation*}(x_1\otimes\cdots\otimes
x_i)\textrm{{\cyr sh}}_{\sigma}(x_{i+1}\otimes\cdots\otimes
x_{i+j})=\sum_{w\in \mathfrak{S}_{i,j}}T_w(x_1\otimes\cdots\otimes
x_{i+j}).\end{equation*}

The space $T(V)$ equipped with the product $\textrm{{\cyr
sh}}_{\sigma}$ is called the \emph{quantum shuffle algebra} and
denoted by $T_\sigma(V)$. Moreover, the Yetter-Drinfel'd module
$T_\sigma(V)$ is a module-algebra and a comodule-algebra with the
diagonal action and coaction respectively (see \cite{Ro},
Proposition 9). So $T_\sigma(V)$ is a braided algebra. In fact,
the result holds for any braided vector space.

\begin{theorem}Let $(V,\sigma)$ be a braided vector space. Then $(T_\sigma(V),\beta)$ is a braided algebra. The subalgebra
$S_\sigma(V)$ of $T_\sigma(V)$ generated by $V$ is also a braided
algebra with the braiding $\beta$.
\end{theorem}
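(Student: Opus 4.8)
The plan is to verify, bidegree by bidegree, the commutative diagram of Definition 2.1 for the triple $(T_\sigma(V),\text{{\cyr sh}}_\sigma,\beta)$, and then to obtain the statement for $S_\sigma(V)$ by restricting this diagram.

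Since $\beta$ is already known to be a braiding on $T(V)$, and since $\beta_{0i}$ and $\beta_{i0}$ are the flip by convention, the unit part of the diagram --- the identities $\beta(v\,\underline{\otimes}\,1)=1\,\underline{\otimes}\,v$ and $\beta(1\,\underline{\otimes}\,v)=v\,\underline{\otimes}\,1$ --- is automatic, so all the content lies in the two ``square'' identities
\[ \beta\circ(\text{{\cyr sh}}_\sigma\,\underline{\otimes}\,\mathrm{id})=(\mathrm{id}\,\underline{\otimes}\,\text{{\cyr sh}}_\sigma)\circ(\beta\,\underline{\otimes}\,\mathrm{id})\circ(\mathrm{id}\,\underline{\otimes}\,\beta),\qquad \beta\circ(\mathrm{id}\,\underline{\otimes}\,\text{{\cyr sh}}_\sigma)=(\text{{\cyr sh}}_\sigma\,\underline{\otimes}\,\mathrm{id})\circ(\mathrm{id}\,\underline{\otimes}\,\beta)\circ(\beta\,\underline{\otimes}\,\mathrm{id}). \]
I would evaluate the first identity on $V^{\otimes a}\,\underline{\otimes}\,V^{\otimes b}\,\underline{\otimes}\,V^{\otimes c}$. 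Expanding $\text{{\cyr sh}}_\sigma$ as $\sum_{\omega\in\mathfrak{S}_{a,b}}T^\sigma_\omega$ and using $\beta_{ij}=T^\sigma_{\chi_{ij}}$, the left-hand side reads $\sum_{\omega\in\mathfrak{S}_{a,b}}T^\sigma_{\chi_{a+b,c}}T^\sigma_{\omega\times 1_c}$, while the right-hand side, after inserting the recorded reduced factorization $\chi_{a+b,c}=(\chi_{a,c}\times 1_b)(1_a\times\chi_{b,c})$, reads $\sum_{\omega\in\mathfrak{S}_{a,b}}T^\sigma_{1_c\times\omega}T^\sigma_{\chi_{a+b,c}}$. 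Every product of lifts appearing is along a reduced word (i.e.\ the lengths add) --- an immediate inversion count, using that $\ell(\chi_{a+b,c})=(a+b)c$ and that composing $\chi_{a+b,c}$ with a permutation acting inside a single one of its two blocks adds exactly the length of that permutation --- so each collapses to a single lift; and the two resulting sums coincide term by term, because $\chi_{a+b,c}$ merely interchanges the two blocks and hence conjugates $\omega\times 1_c$ into $1_c\times\omega$. The second square is entirely parallel, now using the reduced factorization of $\chi_{a,b+c}$ obtained from the recorded ones via $\chi_{pq}=\chi_{qp}^{-1}$, and summing over $\mathfrak{S}_{b,c}$. This shows $(T_\sigma(V),\beta)$ is a YB algebra.

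For $S_\sigma(V)$ I would use that its degree-$n$ homogeneous component is the image of the iterated quantum shuffle $S^\sigma_n=\sum_{w\in\mathfrak{S}_n}T^\sigma_w$ acting on $V^{\otimes n}$ (alternatively, one can run a short induction on bidegree, peeling off one tensor factor at a time from $S_\sigma(V)$ and invoking the two squares just established). Writing $S^\sigma_n\otimes S^\sigma_m=\sum_{w\in\mathfrak{S}_n\times\mathfrak{S}_m}T^\sigma_w$ and repeating the block-interchange computation above with $\chi_{nm}$ gives $\beta_{nm}\circ(S^\sigma_n\otimes S^\sigma_m)=(S^\sigma_m\otimes S^\sigma_n)\circ\beta_{nm}$ on $V^{\otimes n}\otimes V^{\otimes m}$; hence $\beta$ maps $S_\sigma(V)\,\underline{\otimes}\,S_\sigma(V)$ into itself, and applying $\beta^{-1}$ to this identity shows the restriction is again invertible. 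Being a restriction of $\beta$, it still satisfies the quantum Yang-Baxter equation, so it is a braiding on $S_\sigma(V)$; and the diagram of Definition 2.1 for $S_\sigma(V)$ is literally the restriction of the one already verified for $T_\sigma(V)$, since the product of $S_\sigma(V)$ is the restriction of $\text{{\cyr sh}}_\sigma$. Hence $(S_\sigma(V),\beta)$ is a YB algebra.

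The part I expect to be the real work is the braid-group bookkeeping: one has to make sure that every product $T^\sigma_wT^\sigma_{w'}$ occurring is taken along a reduced word, so that it collapses to $T^\sigma_{ww'}$. Concretely this reduces to elementary inversion counts for the $\chi_{ij}$ composed with shuffles and with block permutations $u\times v$, all of which come out length-additive precisely because $\chi_{ij}$ never reverses the internal order of either of the two blocks it interchanges. Apart from that, the argument is, up to stripping out the Hopf-algebraic language, the computation already carried out in the excerpt for $(T^c(V),\beta)$ and in \cite{Ro} for Yetter-Drinfel'd modules.
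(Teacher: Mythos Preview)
Your proposal is correct and follows essentially the same approach as the paper: both arguments reduce the YB-algebra squares to the symmetric-group identity $(1_c\times\omega)\chi_{a+b,c}=\chi_{a+b,c}(\omega\times 1_c)$ for $\omega\in\mathfrak{S}_{a,b}$, check length-additivity so that the lifts $T^\sigma$ multiply correctly, and then handle $S_\sigma(V)$ via the block-interchange $\chi_{nm}(w\times w')=(w'\times w)\chi_{nm}$. Your write-up is somewhat more explicit about why the words are reduced and about invertibility of the restricted braiding, but the substance is identical.
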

\begin{proof}For any triple $(i,j,k)$ of positive integers and any $w\in\mathfrak{S}_{i,j}$, we have that $$(1_{\mathfrak{S}_k}\times w)(\chi_{ik}\times 1_{\mathfrak{S}_j} )(1_{\mathfrak{S}_i}\times \chi_{jk})=\chi_{i+j,k}(w\times 1_{\mathfrak{S}_k}).$$ And all the expressions are reduced. It follows that $$(\mathrm{id}_V^{\otimes k}\otimes \textrm{{\cyr sh}}_\sigma)(\beta_{ik}\otimes \mathrm{id}_V^{\otimes j})(\mathrm{id}_V^{\otimes i}\otimes \beta_{jk})=\beta_{i+j,k}(\textrm{{\cyr sh}}_\sigma \otimes \mathrm{id}_V^{\otimes k}).$$The other conditions can be proved
similarly. Hence $(T_\sigma(V),\beta)$ is a braided algebra.

From the definition, $S_\sigma(V)=\oplus_{i\geq
0}\mathrm{Im}(\sum_{w\in \mathfrak{S}_i}T^\sigma_w)$. By observing
that $\chi_{ij}(w\times w^\prime)=(w^\prime\times w)\chi_{ij}$ for
any $w\in \mathfrak{S}_i$ and $w^\prime\in \mathfrak{S}_j$ and all
the expressions are reduced, we have that $\beta$ is a braiding on
$S_\sigma(V)$. It is certainly a braided algebra since it is a
subalgebra of $T_\sigma(V)$.
\end{proof}

\begin{remark}By using the dual construction, we know $(T(V), \beta)$ is a braided coalgebra with the following coproduct $\bigtriangleup$: for any $x_1,\ldots, x_{n}\in V$, the component of $\bigtriangleup(x_1\otimes\cdots\otimes
x_n)$ in $V^{\otimes p}\underline{\otimes} V^{\otimes n-p}$ is
\begin{eqnarray*}\bigtriangleup(x_1\otimes\cdots\otimes
x_n)=\sum_{w\in
\mathfrak{S}_{p,n-p}}T_{w^{-1}}(x_1\otimes\cdots\otimes
x_{n}).\end{eqnarray*}
\end{remark}

\begin{example}[Quantum exterior algebras] Let $V$ be a vector
space over $\mathbb{C}$ with basis $\{e_1,\ldots, e_N\}$. Take a
nonzero scalar $q\in \mathbb{C}$. We define a braiding $\sigma$ on
$V$ by
\[\sigma(e_{i}\otimes e_{j})=\left\{
\begin{array}{lll}
e_{i}\otimes e_{i},&& i=j,\\
q^{-1}e_{j}\otimes e_{i},&& i<j,\\
q^{-1}e_{j}\otimes e_{i}+(1-q^{-2})e_{i}\otimes e_{j},&&i>j.
\end{array} \right.
\]
Then $\sigma$ satisfies the Iwahori's quadratic equation
$(\sigma-\mathrm{id}_{V\otimes
V})(\sigma+q^{-2}\mathrm{id}_{V\otimes V})=0.$ In fact, this
$\sigma$ is given by the action of the $R$-matrix on the
fundamental representation of $U_q\mathfrak{sl}_N$. By a result of
Gurevich (see \cite{Gu}, Proposition 2.13), we know that
$T(V)/I\cong\oplus_{i\geq 0}\mathrm{Im}(\sum_{w\in
\mathfrak{S}_i}(-1)^{l(w)}T_w)$ as algebras, where $l(w)$ is the
length of $w$ and $I$ is the ideal of $T(V)$ generated by
$\mathrm{Ker}(\mathrm{id}_{V\otimes V}-\sigma)$. By easy
computation, we get that $\mathrm{Ker}(\mathrm{id}_{V\otimes
V}-\sigma)=\mathrm{Span}_{\mathbb{C}}\{ e_{i}\otimes
e_{i},q^{-1}e_{i}\otimes e_{j}+e_{j}\otimes e_{i}(i<j)\}$. We
denote by $e_{i_{1}}\wedge \cdots \wedge e_{i_{s}}$ the image of
$e_{i_{1}}\otimes \cdots \otimes e_{i_{s}}$ in $S_\sigma(V)$. So
$S_\sigma(V)$ is an algebra generated by $(e_i)$ with the
relations $e_i^2=0$ and $e_j\wedge e_i=-q^{-1}e_i\wedge e_j$ if
$i<j$. This $S_\sigma(V)$ is called the \emph{quantum exterior
algebra} over $V$. It is a finite dimensional braided algebra with
the braiding $\beta$.

The quantum exterior algebra has another braided algebra structure
as follows. We denote the increasing set $(i_{1}, \ldots ,i_{s})$
by $\underline{i}$ and so on. For $1\leq i_{1}<\cdots <i_{s}\leq
N$ and $1\leq j_{1}<\cdots <j_{t}\leq N$, we denote
\[(i_{1},\cdots ,i_{s}|j_{1},\cdots ,j_{t})=\left \{
\begin{array}{lll}
0,&& \mathrm{if}\ \underline{i}\cap \underline{j}\neq \emptyset,\\
2\sharp \{(i_{k},j_{l})|i_{k}>j_{l}\}-st,&&\mathrm{otherwise}.
\end{array} \right.
\]

Using the above notation, it is easy to see that $$e_{i_{1}}\wedge
\cdots \wedge e_{i_{s}}\wedge e_{j_{1}}\wedge \cdots \wedge
e_{j_{t}}=(-q)^{-(i_{1},\cdots ,i_{s}|j_{1},\cdots ,j_{t})}
e_{j_{1}}\wedge \cdots \wedge e_{j_{t}}\wedge e_{i_{1}}\wedge
\cdots \wedge e_{i_{s}}.$$

We define the \emph{q-flip} $\mathscr{T}=\bigoplus_{s,t}
\mathscr{T}_{s,t}$: $S_\sigma(V)\otimes S_\sigma(V)\rightarrow
S_\sigma(V)\otimes S_\sigma(V)$ as follows: for $1\leq
i_{1}<\cdots <i_{s}\leq N$ and $1\leq j_{1}<\cdots <j_{t}\leq N$,
$$\mathscr{T}_{s,t}(e_{i_{1}}\wedge \cdots \wedge e_{i_{s}}\otimes e_{j_{1}}\wedge \cdots \wedge e_{j_{t}})=(-q)^{(i_{1},\cdots ,i_{s}|j_{1},\cdots ,j_{t})} e_{j_{1}}\wedge \cdots \wedge e_{j_{t}}\otimes e_{i_{1}}\wedge \cdots \wedge e_{i_{s}}.
$$

Obviously, $\mathscr{T}$ is a braiding and it induces a
representation of the symmetric group since
$\mathscr{T}^{2}=\mathrm{id}$. Furthermore, it is easy to show
that $( S_\sigma(V), \wedge, \mathscr{T})$ is a braided algebra
and $( S_\sigma(V), \delta, \mathscr{T})$ is a braided coalgebra.
\end{example}

Originally, quantum shuffle algebras were discovered from the
cotensor algebras (see \cite{Ro}). Cotensor algebras are the dual
construction of tensor algebras. They are constructed over Hopf
bimodules.

\begin{definition}[ \cite{N}, \cite{Wo1}]Let $H$ be a Hopf algebra. A \emph{Hopf bimodule} over $H$ is a vector space $M$ given with an $H$-bimodule structure, an $H$-bicomodule structure with left and right coactions $\delta_L: M\rightarrow H\otimes M$, $\delta_R: M\rightarrow M\otimes H$ which commute in the following sense: $(\delta_L\otimes \mathrm{id}_M)\delta_R=(\mathrm{id}_M\otimes \delta_R)\delta_L$, and such that $\delta_L$ and $\delta_R$ are morphisms of $H$-bimodules.
\end{definition}

We denote by $M^R$ the subspace of right coinvariants, i.e.,
$M^R=\{m\in M|\delta_R(m)=m\otimes 1\}$. Then $M^R$ is a left
Yetter-Drinfel'd module with coaction $\delta$ and the left
adjoint action given by: for any $h\in H$ and $m\in M^R$,
$$h\cdot m=\sum h_{(1)}mS(h_{(2)}).$$

Combining the discussions in the preceding section, it is not hard
to see that the cotensor algebra is both a braided algebra and a
braided coalgebra. Here, we give a more general description of
this phenomenon in the framework due to Radford \cite{Ra} of
bialgebras with a projection onto a Hopf algebra. We first recall
some results in \cite{Ra} which we will use in our discussion.

Let $H$ be a Hopf algebra with antipode $S$ and $A$ be a
bialgebra. Suppose there are two bialgebra maps $i: H\rightarrow
A$ and $\pi: A\rightarrow H$ such that $\pi\circ i=\mathrm{id}_H$.
Set $\Pi=\mathrm{id}_A\star(i\circ S\circ \pi)$, where $\star$ is
the convolution product on $\mathrm{End}(A)$, and $B=\Pi(A)$.

1. The bialgebra $A$ is a Hopf bimodule over $H$ with actions
$h\cdot a=i(h)a$ and $a\cdot h=ai(h)$, coactions $\delta_L(a)=\sum
\pi(a_{(1)})\otimes a_{(2)}$ and $\delta_R(a)=\sum a_{(1)}\otimes
\pi(a_{(2)})$ for any $h\in H$ and $a\in A$. Obviously, by the
projection formula from a Hopf bimodule to its right coinvariant
subspace, $A^R=B$. So $B$ is a left Yetter-Drinfel'd module over
$H$ with the left adjoint action.

2. The set $B$ is a subalgebra of A. Furthermore it is both a
module-algebra and a comodule-algebra. Moreover, $B$ has a
coalgebra structure such that $\Pi$ is a coalgebra map. With this
coalgebra structure, $B$ is both a module-coalgebra and a
comodule-coalgebra.

3. The map $B\otimes H\rightarrow A$ given by $b\otimes h\mapsto
bi(h)$ is a bialgebra isomorphism, where $B\otimes H$ is with the
smash product and smash coproduct.

So by combining Woronowicz's examples on $H$ and Proposition 2.5
for tensor products, the bialgebra $A$ is both a braided algebra
and a braided coalgebra. If $A$ is moreover a Hopf algebra, then
it is again a braided algebra and braided coalgebra using directly
Woronowicz's braidings. Obviously, these two braided algebra
(resp. coalgebra) structures are different.

Now we restrict our attention on cotensor algebras, which will
give us braided algebras related to quantum groups. For a Hopf
bimodule $M$ over $H$, one can construct the cotensor algebra
$T^c_H(M)$ over $H$ and $M$. More precisely, we define $M\square
M=\mathrm{Ker}(\delta_R\otimes \mathrm{id}_M-\mathrm{id}_M\otimes
\delta_L)$ and $M^{\square k}= M^{\square k-1}\square M$ for
$k\geq 3$. And the cotensor algebra built over $H$ and $M$ is
$T^c_H(M)=H\oplus M\oplus\oplus_{k\geq 2}M^{\square k}$. It is
again a Hopf bimodule over $H$. From the universal property of
cotensor algebras, one can construct a Hopf algebra structure with
a complicated multiplication on $T^c_H(M)$. We denote by $S_H(M)$
the subalgebra of $T^c_H(M)$ generated by $H$ and $M$. Then
$S_H(M)$ is a sub-Hopf algebra. For more details, one can see
\cite{N1}. Apparently, the cofree Hopf algebra $T^c(V)$ defined in
Section 2 is the cotensor algebra over the trivial Hopf algebra
$K$ and the trivial Hopf bimodule $V$. Here $V$ is a Hopf bimodule
with scalar multiplication and the coactions defined by
$\delta_L(v)=1\otimes v$ and $\delta_R(v)=v\otimes 1$ for any
$v\in V$.

Since the inclusion $H\rightarrow T^c_H(M)$ and the projection
$T^c_H(M)\rightarrow H$ are bialgebra maps, we get:

\begin{theorem}Let $M$ be a Hopf bimodule over $H$. Then $T^c_H(M)$ is both a braided algebra and a braided
coalgebra. So is $S_H(M)$.
\end{theorem}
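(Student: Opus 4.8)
The plan is to exhibit $T^c_H(M)$ --- and then $S_H(M)$ --- as an instance of the bialgebra-with-projection situation of Radford \cite{Ra} discussed just before the theorem, and then to quote directly the conclusion obtained there for a general such bialgebra: that it is both a YB algebra and a YB coalgebra. So the first step is to record that $T^c_H(M)$ is a bialgebra --- indeed a Hopf algebra --- by the universal property of the cotensor construction (see \cite{N}), and that, by assumption, the inclusion $i\colon H\hookrightarrow T^c_H(M)$ and the projection $\pi\colon T^c_H(M)\twoheadrightarrow H$ onto the degree-zero part are bialgebra maps with $\pi\circ i=\mathrm{id}_H$. Putting $A=T^c_H(M)$, $\Pi=\mathrm{id}_A\star(i\circ S\circ\pi)$ and $B=\Pi(A)$, the three numbered statements preceding the theorem apply: $A$ is a Hopf bimodule over $H$, $B=A^R$ is a Yetter-Drinfel'd module which is simultaneously a module-algebra, a comodule-algebra, a module-coalgebra and a comodule-coalgebra, and $b\otimes h\mapsto b\,i(h)$ is a bialgebra isomorphism $B\otimes H\cong A$ intertwining the smash product and smash coproduct on the left with the product and coproduct of $A$.

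The second step is to feed this into Proposition 2.17. For the algebra part, take $V=B$ and $W=H$, where $H$ is regarded as a Yetter-Drinfel'd module over itself through the first Woronowicz structure (adjoint action, regular coaction), for which $H$ is a module-algebra and a comodule-algebra; then Proposition 2.17 makes $(B\otimes H,\Sigma)$ a YB algebra, and its product $(v\otimes w)\star(v'\otimes w')=\sum v(w_{(-1)}\cdot v')\otimes w_{(0)}w'$ is precisely the smash product, so transporting along $B\otimes H\cong A$ shows $T^c_H(M)$ is a YB algebra. For the coalgebra part, take instead $W=H$ with the second Woronowicz structure (regular action, coaction $h\mapsto\sum h_{(1)}S(h_{(3)})\otimes h_{(2)}$), for which $H$ is a module-coalgebra and a comodule-coalgebra; Proposition 2.17 then makes $(B\otimes H,\Sigma)$ a YB coalgebra with the smash coproduct of Corollary 2.16, and transporting again gives that $T^c_H(M)$ is a YB coalgebra. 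Note that, exactly as for $H$ itself in Woronowicz's examples, the two braidings witnessing the two structures are in general different, which the definitions allow; alternatively, since $A$ is a Hopf algebra one may instead invoke Woronowicz's braidings $F_H$ and $F_H'$ directly.

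For $S_H(M)$ I would observe that it is a sub-Hopf algebra of $T^c_H(M)$ containing $H$, so that $i$ factors through it and $\pi$ restricts to it, leaving the same projection data; the argument above then applies verbatim with $A$ replaced by $S_H(M)$. Equivalently, it is enough to check that $S_H(M)$ is stable under the braiding $\Sigma$ and under the smash product and smash coproduct, so that it inherits the structure of a YB subalgebra and a YB subcoalgebra of $T^c_H(M)$. I do not foresee a genuine obstacle here, the substance residing entirely in the general discussion already carried out; the only points demanding care are to keep the two distinct Yetter-Drinfel'd structures on $H$ apart (one for the algebra half, one for the coalgebra half) and to verify the stability of $S_H(M)$ under $\Sigma$, which holds because $\Sigma=(\mathrm{id}\otimes\theta'\otimes\mathrm{id})(\sigma_B\otimes\sigma_H)(\mathrm{id}\otimes\theta\otimes\mathrm{id})$ is assembled from $\sigma_B$, $\sigma_H$, $\theta$ and $\theta'$, each of which preserves the subspaces generated by $H$ and $M^R$.
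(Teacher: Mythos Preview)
Your proposal is correct and follows exactly the paper's approach: the paper's entire proof is the one-line observation that the inclusion $H\to T^c_H(M)$ and projection $T^c_H(M)\to H$ are bialgebra maps, so the Radford projection framework and the conclusion ``combining Woronowicz's examples and Proposition 2.17'' established just before the theorem apply verbatim. Your write-up simply unpacks this in more detail (including the two distinct Yetter--Drinfel'd structures on $H$ needed for the algebra and coalgebra halves, and the sub-Hopf-algebra argument for $S_H(M)$), but the strategy is identical.
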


As an application of the above theorem, we consider the following
special case. Let $G=\mathbb{Z}^r\times \mathbb{Z}/l_1
\times\mathbb{Z}/l_2 \times\cdots \mathbb{Z}/l_p$ and $H=K[G]$ be
the group algebra of $G$. We fix generators $K_1,\ldots, K_N$ of
$G$ ($N=r+p$). Let $V$ be a vector space over $\mathbb{C}$ with
basis $\{e_1,\ldots,e_N\}$. It is known that $V$ is a
Yetter-Drinfel'd module over $H$ with action and coaction given by
$K_i\cdot e_j=q_{ij}e_j$ and $\delta_L(e_i)=K_i\otimes e_i $ with
some nonzero scalar $q_{ij}\in \mathbb{C}$ respectively. The
braiding coming from the Yetter-Drinfel'd module structure is
given by $\sigma(e_i\otimes e_j)=q_{ij}e_j\otimes e_i$. Now we
choose special $q_{ij}$ to construct meaningful examples. Let
$A=(a_{ij})_{1\leq i,j\leq N}$ be a symmetrizable generalized
Cartan matrix, $(d_1,\ldots, d_N)$ be positive relatively prime
integers such that $(d_ia_{ij})$ is symmetric. Let $q\in
\mathbb{C}$ and define $q_{ij}=q^{d_i a_{ij}}$. By Theorem 15 in
\cite{Ro}, $S_H(M)$ is isomorphic, as a Hopf algebra, to the sub
Hopf algebra $U_q^+$ of the quantized universal enveloping algebra
associated with $A$ when $G=\mathbb{Z}^N$ and $q$ is not a root of
unity; $S_H(M)$ is isomorphic, as a Hopf algebra, to the quotient
of the restricted quantized enveloping algebra $u_q^+$ by the
two-sided Hopf ideal generated by the elements $(K_i^l-1)$,
$i=1,\ldots, N$ when $G=(\mathbb{Z}/l)^N$ and $q$ is a primitive
$l$-th root of unity. Then we have:
\begin{corollary}Both $U_q^+$ and $u_q^+$ are braided algebras and braided coalgebras.
\end{corollary}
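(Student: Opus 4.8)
The plan is to deduce Corollary 3.8 directly from Theorem 3.7 together with the identification of $U_q^+$ and $u_q^+$ as (quotients of) $S_H(M)$ recalled just above the statement. By Theorem 3.7, for any Hopf bimodule $M$ over a Hopf algebra $H$ the subalgebra $S_H(M)$ generated by $H$ and $M$ is both a YB algebra and a YB coalgebra, with the braiding coming from Woronowicz's braidings via the bialgebra-with-projection machinery of Radford (equivalently, via Proposition 2.17 applied to the Yetter-Drinfel'd module structure on the right coinvariants). So the first step is simply to apply Theorem 3.7 to the specific Hopf bimodule $M$ built from the group algebra $H=K[G]$ and the Yetter-Drinfel'd module $V$ with $K_i\cdot e_j=q_{ij}e_j$, $\delta_L(e_i)=K_i\otimes e_i$, and $q_{ij}=q^{d_i a_{ij}}$.

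Next I would invoke Theorem 15 of \cite{Ro}, quoted in the paragraph preceding the corollary: when $G=\mathbb{Z}^N$ and $q$ is not a root of unity, $S_H(M)\cong U_q^+$ as Hopf algebras; when $G=(\mathbb{Z}/l)^N$ and $q$ is a primitive $l$-th root of unity, $S_H(M)$ is isomorphic as a Hopf algebra to the quotient of $u_q^+$ by the Hopf ideal generated by the $K_i^l-1$. In the first case the isomorphism is immediate and transports the YB algebra and YB coalgebra structures of $S_H(M)$ onto $U_q^+$; a YB algebra (resp.\ coalgebra) structure is preserved under any isomorphism of algebras (resp.\ coalgebras) that intertwines the braidings, since the defining diagrams in Definition 2.1 are functorial. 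In the second case one first obtains that the quotient $u_q^+/(K_i^l-1)$ is a YB algebra and YB coalgebra; but this quotient is (isomorphic to) $u_q^+$ itself once one observes that in $u_q^+$ the relations $K_i^l=1$ already hold, so the Hopf ideal is zero and the quotient map is an isomorphism. Hence $u_q^+$ inherits both structures.

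The main subtlety — really the only thing that needs care rather than being routine — is matching conventions: one must check that the Yetter-Drinfel'd / Hopf-bimodule data producing $S_H(M)$ in Theorem 3.7 is exactly the data for which \cite{Ro}, Theorem 15 gives the identification with $U_q^+$ (resp.\ $u_q^+$), in particular that the braiding $\sigma(e_i\otimes e_j)=q_{ij}e_j\otimes e_i$ with $q_{ij}=q^{d_i a_{ij}}$ is the one corresponding to the Serre-type relations, and that the generators $K_1,\dots,K_N$ of $G$ are identified with the group-like elements of $U_q^+$ (resp.\ $u_q^+$). Once the dictionary is fixed, no further computation is needed: the statement is a formal consequence of Theorem 3.7 and \cite{Ro}. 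So I would present the proof as: apply Theorem 3.7 to this $M$, then transport along the isomorphisms of \cite{Ro}, Theorem 15, noting in the root-of-unity case that the Hopf ideal in question vanishes in $u_q^+$.

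\begin{proof}
Apply Theorem 3.7 to the Hopf bimodule $M$ over $H=K[G]$ associated with the Yetter-Drinfel'd module $V$ described above, with $q_{ij}=q^{d_ia_{ij}}$. Then $S_H(M)$ is both a YB algebra and a YB coalgebra. By Theorem 15 of \cite{Ro}, when $G=\mathbb{Z}^N$ and $q$ is not a root of unity there is a Hopf algebra isomorphism $S_H(M)\cong U_q^+$; since an isomorphism of algebras (resp.\ coalgebras) intertwining the braidings carries the YB algebra (resp.\ YB coalgebra) axioms of Definition 2.1 to the target, $U_q^+$ is a YB algebra and a YB coalgebra. When $G=(\mathbb{Z}/l)^N$ and $q$ is a primitive $l$-th root of unity, the same theorem gives a Hopf algebra isomorphism of $S_H(M)$ with the quotient of $u_q^+$ by the Hopf ideal generated by the elements $K_i^l-1$; but these elements vanish in $u_q^+$, so this quotient map is an isomorphism and $u_q^+$ itself inherits both structures.
\end{proof}
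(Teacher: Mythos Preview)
Your proposal is correct and follows precisely the approach the paper intends: the corollary is stated without proof and is meant to be an immediate consequence of the preceding theorem on $S_H(M)$ combined with Theorem~15 of \cite{Ro}. Two minor remarks: your internal references are off by two (the relevant theorem is 3.5, not 3.7, and the corollary is 3.6), and your resolution of the $u_q^+$ case---arguing that $K_i^l=1$ already holds in the restricted quantum group so the quotient is trivial---is reasonable but goes slightly beyond what the paper makes explicit, since the paper's own phrasing (quotienting $u_q^+$ by the ideal generated by $K_i^l-1$) leaves this point ambiguous.
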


We use the above special $S_\sigma(V)\otimes H$ to illustrate the
difference between the braiding coming from Woronowicz's
construction and the one from the tensor product of two
Yetter-Drinfel'd modules.

We use the following notation: for any $g=K_1^{i_1}\cdots
K_N^{i_N}\in G$, $q_{gj}=q_{1j}^{i_1}\cdots q_{Nj}^{i_N}$, i.e.,
$g\cdot e_j=q_{gj}e_j$. For any $g,h\in G$, Woronowicz's braiding
$F^\prime$ has the following action on $S_\sigma(V)\otimes
 H$:
\begin{eqnarray*}
\lefteqn{F^\prime\Big((e_i\otimes g)\otimes(e_j\otimes
h)\Big)}\\[3pt]
&=&q_{ij}q_{gj}(e_j\otimes h)\otimes (e_i\otimes
g)-q_{ij}q_{gj}q_{hi}(e_je_i\otimes h)\otimes
g+q_{gj}(e_ie_j\otimes h)\otimes g.
\end{eqnarray*}
But the braiding in the category of Yetter-Drinfel'd modules is :
\begin{eqnarray*} \Sigma\Big((e_i\otimes
g)\otimes(e_j\otimes h)\Big)&=&q_{ij}(e_j\otimes
h)\otimes(e_i\otimes g).
\end{eqnarray*}

\section{Quantum multi-brace algebras}
In this section, we introduce and study the main objects of this
paper: quantum multi-brace algebras. The fact that they lead
naturally to braided algebras relies on compatibilities between
the braiding and the maps $M_{ij}$ involved. Part of our task is
to deduce from our assumptions in the definition all the
identities satisfied by braiding, coproducts and maps $M_{ij}$,
which is done in a series of lemmas.

Let $(C,\bigtriangleup, \varepsilon)$ be a coalgebra with a
preferred group-like element $1_C\in C$ and denote
$\overline{\bigtriangleup}(x)=\bigtriangleup(x)-x\otimes
1_C-1_C\otimes x$ for any $x\in C$. The map
$\overline{\bigtriangleup}$ is called the \emph{reduced
coproduct}. It is coassociative. The following definition and
universal property play an essential role in the theory of quantum
multi-brace algebras.

\begin{definition}[\cite{Q}]A coalgebra $(C,\bigtriangleup)$ with a
preferred group-like element $1_C\in C$ is said to be
\emph{connected} if $C=\cup_{r\geq 0}F_r C$,
where\begin{eqnarray*}F_0C&=&K1_C,\\
F_rC&=&\{x\in C|\overline{\bigtriangleup}(x)\in F_{r-1}C\otimes
F_{r-1}C \},\ \ \mathrm{for\ r\geq
1}.\end{eqnarray*}\end{definition}

There is a well-known universal property for the cofree Hopf
algebra $T^c(V)$ in the category of connected coalgebras (see,
e.g., \cite{LoR}):

\begin{proposition}Given a connected coalgebra $(C,\bigtriangleup,\varepsilon)$ and a linear map $\phi:C\rightarrow V$ such that $\phi(1_C)=0$, there is a unique coalgebra morphism $\overline{\phi}:C\rightarrow T^c(V)$ which extends $\phi$, i.e., $P_V\circ \overline{\phi}=\phi$, where $P_V:T^c(V)\rightarrow V $ is the projection onto $V$. Explicitly, $\overline{\phi}=\varepsilon+\sum_{n\geq 1}\phi^{\otimes n}\circ\overline{\bigtriangleup}^{(n-1)}$.\end{proposition}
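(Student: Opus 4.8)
The plan is to prove existence and uniqueness separately, with existence by an explicit formula and uniqueness by an induction along the connectedness filtration.

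\medskip

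For existence, I would set $\overline{\phi}=\varepsilon+\sum_{n\geq 1}\phi^{\otimes n}\circ\overline{\bigtriangleup}^{(n-1)}$ exactly as stated, after first checking this is well-defined: for $x\in F_rC$ the reduced coproduct iterates $\overline{\bigtriangleup}^{(n-1)}(x)$ vanish for $n-1\geq r$ by Proposition 4.3.3 (applied after splitting off the $K1_C$-component, since $\phi(1_C)=0$ kills that part), so the sum is finite on each $x$. Then I must verify $\delta\circ\overline{\phi}=(\overline{\phi}\otimes\overline{\phi})\circ\bigtriangleup$, where $\delta$ is the deconcatenation coproduct on $T^c(V)$. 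The computation reduces, on the graded pieces, to the coassociativity identity $\overline{\bigtriangleup}^{(p+q-1)}=(\overline{\bigtriangleup}^{(p-1)}\otimes\overline{\bigtriangleup}^{(q-1)})\circ\bigtriangleup$ up to the correction terms coming from $1_C$; here one uses that $\delta$ on $T^c(V)$ sends $v_1\otimes\cdots\otimes v_n$ to $\sum(v_1\otimes\cdots\otimes v_p)\underline{\otimes}(v_{p+1}\otimes\cdots\otimes v_n)$, and matches term by term with $\phi^{\otimes p}\overline{\bigtriangleup}^{(p-1)}\otimes\phi^{\otimes q}\overline{\bigtriangleup}^{(q-1)}$ applied after $\bigtriangleup$. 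Counit compatibility $\varepsilon_{T^c(V)}\circ\overline{\phi}=\varepsilon_C$ is immediate since the degree-$0$ part of $\overline{\phi}$ is $\varepsilon$. Finally $P_V\circ\overline{\phi}=\phi$ since only the $n=1$ term $\phi\circ\overline{\bigtriangleup}^{(0)}=\phi$ lands in $V=V^{\otimes 1}$.

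\medskip

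For uniqueness, suppose $\psi:C\rightarrow T^c(V)$ is another coalgebra morphism with $P_V\circ\psi=\phi$. Writing $\psi=\sum_n P_n\circ\psi$ with $P_n:T^c(V)\rightarrow V^{\otimes n}$ the graded projections, the coalgebra morphism property forces $P_n\circ\psi=(P_1\circ\psi)^{\otimes n}\circ\overline{\bigtriangleup}^{(n-1)}=\phi^{\otimes n}\circ\overline{\bigtriangleup}^{(n-1)}$ on $\overline{C}$ (and the degree-$0$ part is pinned down by the counit), because $P_n=(P_1^{\otimes n})\circ\delta^{(n-1)}$ on $T^c(V)$ and $\psi$ intertwines $\delta$ with $\bigtriangleup$; one also needs $\psi(1_C)=1_{T^c(V)}$, which follows from $1_C$ being group-like and $\psi$ a coalgebra map together with $P_V\psi(1_C)=\phi(1_C)=0$. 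So $\psi=\overline{\phi}$. I would phrase this cleanly by induction on the filtration: on $F_0C$ the two agree, and if they agree on $F_{r-1}C$ then for $x\in F_rC$ the relation $\delta\psi(x)=(\psi\otimes\psi)\bigtriangleup(x)$ expresses the higher components of $\psi(x)$ through values of $\psi$ on $F_{r-1}C$, forcing agreement with $\overline{\phi}$.

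\medskip

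The main obstacle is the existence half: carefully tracking the $1_C$-correction terms so that the deconcatenation coproduct on $T^c(V)$ really matches $\bigtriangleup$ under $\overline{\phi}$. The reduced coproduct is coassociative (Proposition 4.3.1) but $\bigtriangleup$ itself is not, and the bookkeeping of where the $x\otimes 1_C$ and $1_C\otimes x$ summands go — and the fact that $\phi$ annihilates $1_C$ so those summands contribute only to lower-degree pieces of $\overline{\phi}$ — is exactly the delicate point. Once that bookkeeping is set up correctly, both the morphism property and uniqueness are formal consequences of Proposition 4.3.
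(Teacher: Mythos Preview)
The paper does not actually prove this proposition: it is stated as ``a well-known universal property for $T^c(V)$'' and left without proof. So there is no argument in the paper to compare your proposal against. Your outline is the standard one and is essentially correct; both the explicit formula for existence and the filtration/graded-component argument for uniqueness are the expected moves, and you have correctly identified that the hypothesis $\phi(1_C)=0$ is what makes the $1_C$-correction terms harmless when passing between $\bigtriangleup$ and $\overline{\bigtriangleup}$.

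One slip to fix: near the end you write that ``$\bigtriangleup$ itself is not'' coassociative. This is false---$\bigtriangleup$ is the coproduct of a coalgebra and is coassociative by definition. What you presumably mean is that the identity $\overline{\bigtriangleup}^{(p+q-1)}=(\overline{\bigtriangleup}^{(p-1)}\otimes\overline{\bigtriangleup}^{(q-1)})\circ\bigtriangleup$ does not hold on the nose (it holds with $\overline{\bigtriangleup}$ on the right, by coassociativity of the reduced coproduct), and that the discrepancy $\bigtriangleup-\overline{\bigtriangleup}=(-)\otimes 1_C+1_C\otimes(-)$ is annihilated after applying $\phi^{\otimes p}\otimes\phi^{\otimes q}$ because $\phi(1_C)=0$. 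State it that way and the bookkeeping you flag as the ``main obstacle'' becomes a one-line observation rather than something delicate.
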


Indeed, the sum $\sum_{n\geq 1}\phi^{\otimes
n}\circ\overline{\bigtriangleup}^{(n-1)}$ in the above formula for
the map $\overline{\phi}$ is finite since $C$ is connected and
$\phi(F_0 C)=0$ imply that $\phi^{\otimes
n}\circ\overline{\bigtriangleup}^{(n-1)}$ vanishes on $F_{n-1} C$.
There is a useful consequence of this universal property.

\begin{corollary}Let $C$ be a connected coalgebra. If $\Phi,\Psi:C\rightarrow T^c(V)$ are coalgebra maps such that $P_V\circ \Phi=P_V\circ \Psi$ and $P_V\circ\Phi(1_C)=0=P_V\circ\Psi(1_C)$, then $\Phi=\Psi$. \end{corollary}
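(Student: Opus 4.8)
The plan is to obtain this as a formal consequence of the uniqueness clause in Proposition 4.5. Put $\phi := P_V\circ\Phi = P_V\circ\Psi : C\to V$. By hypothesis $\phi(1_C) = P_V(\Phi(1_C)) = 0$, so Proposition 4.5 applies to $\phi$ and yields a coalgebra morphism $\overline{\phi}:C\to T^c(V)$ with $P_V\circ\overline{\phi}=\phi$, which is the \emph{unique} such coalgebra morphism. Now $\Phi$ and $\Psi$ are themselves coalgebra morphisms $C\to T^c(V)$, and by construction $P_V\circ\Phi=\phi=P_V\circ\Psi$; that is, each of them "extends $\phi$" in precisely the sense of Proposition 4.5. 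Uniqueness then forces $\Phi=\overline{\phi}=\Psi$, which is the claim.

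It may be worth recording why this uniqueness holds, since it is the whole point. For $n\geq 1$ the iterated deconcatenation on $T^c(V)$ satisfies $P_V^{\otimes n}\circ\delta^{(n-1)}=P_{V^{\otimes n}}$, the projection onto the homogeneous component $V^{\otimes n}$ (on $V^{\otimes m}$ the map $\delta^{(n-1)}$ is the sum over all splittings into $n$ ordered blocks, and $P_V^{\otimes n}$ retains only the term with every block of length one, which requires $m=n$). Since a coalgebra morphism $\Phi$ intertwines the coproducts, $\delta^{(n-1)}\circ\Phi=\Phi^{\otimes n}\circ\bigtriangleup^{(n-1)}$, hence $P_{V^{\otimes n}}\circ\Phi=\phi^{\otimes n}\circ\bigtriangleup^{(n-1)}$; this depends only on $\phi$ and on the coalgebra structure of $C$. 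The $V^{\otimes 0}$-component of $\Phi$ is $\varepsilon_C$. As $\Phi(c)$ lies in $T^c(V)=\bigoplus_n V^{\otimes n}$, it has only finitely many nonzero homogeneous components — connectedness of $C$ being what guarantees, via Proposition 4.5, that such a morphism realizing a prescribed $\phi$ exists at all — so $\Phi$ is completely determined by $\phi$. The same applies verbatim to $\Psi$, giving $\Phi=\Psi$.

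The one thing to verify, and it is immediate, is that $\Phi$ and $\Psi$ genuinely belong to the class of maps quantified over in Proposition 4.5: they are coalgebra morphisms into $T^c(V)$, and the common linear map $\phi$ they induce on $V$ annihilates $1_C$ — this is exactly what the two standing hypotheses $P_V\circ\Phi=P_V\circ\Psi$ and $P_V\circ\Phi(1_C)=0=P_V\circ\Psi(1_C)$ provide. There is no substantive obstacle; the content is entirely carried by Proposition 4.5, and the corollary is its bare uniqueness statement reread for two competing morphisms.
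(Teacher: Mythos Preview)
Your argument is correct and is essentially identical to the paper's own proof: both set $\phi=P_V\circ\Phi=P_V\circ\Psi$, note $\phi(1_C)=0$, and then invoke the uniqueness clause of the universal property of $T^c(V)$ to conclude $\Phi=\overline{\phi}=\Psi$. The only adjustment needed is the internal reference: the universal property you cite is Proposition~4.3 in the paper's numbering, not Proposition~4.5.
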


Using Proposition 2.2 and the fact $(T^c(V),\beta)$ is a braided
coalgebra, we know there is a coalgebra structure on
$T^c(V)^{\otimes i}$ by combining $\beta$ and $\delta$:
$$\bigtriangleup_{\beta,i}=T^\beta_{w_i^{-1}}\circ \delta^{\otimes i},$$
and the counit is $\varepsilon^{\otimes i}$.

\begin{proposition}Let $(V,\sigma)$ be a braided vector space. Then for any $n\geq 1$, the coalgebra $(T^c(V)^{\otimes n}, \bigtriangleup_{\beta,n})$ is connected.\end{proposition}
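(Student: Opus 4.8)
The plan is to produce an explicit exhaustive, increasing filtration $(G_r)_{r\geq 0}$ of $T^c(V)^{\otimes n}$ with $G_0=K1_C$ and $\overline{\bigtriangleup_{\beta,n}}(G_r)\subseteq G_{r-1}\otimes G_{r-1}$ for all $r\geq 1$; by a straightforward induction against the filtration of Definition 4.1 (the case $r=0$ is $G_0=K1_C=F_0$, and $\overline{\bigtriangleup_{\beta,n}}(G_r)\subseteq G_{r-1}\otimes G_{r-1}\subseteq F_{r-1}\otimes F_{r-1}$ gives $G_r\subseteq F_r$) this forces $G_r\subseteq F_r\big(T^c(V)^{\otimes n}\big)$ for all $r$, hence $T^c(V)^{\otimes n}=\bigcup_r G_r\subseteq\bigcup_r F_r$, which is connectedness.

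The filtration I would use is the obvious "total length" one,
$$G_r=\bigoplus_{i_1+\cdots+i_n\leq r}V^{\otimes i_1}\otimes\cdots\otimes V^{\otimes i_n}\subseteq T^c(V)^{\otimes n},$$
for which $G_0=K(1\otimes\cdots\otimes 1)=K1_C$ and $\bigcup_r G_r=T(V)^{\otimes n}$ are immediate. To verify compatibility with $\overline{\bigtriangleup_{\beta,n}}$ I would work on a single homogeneous block $W=V^{\otimes i_1}\otimes\cdots\otimes V^{\otimes i_n}$, writing $s=i_1+\cdots+i_n$. Applying $\delta^{\otimes n}$ first decomposes $W$ into summands $V^{\otimes a_1}\otimes V^{\otimes(i_1-a_1)}\otimes\cdots\otimes V^{\otimes a_n}\otimes V^{\otimes(i_n-a_n)}$ over $0\leq a_j\leq i_j$. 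Then I would observe that $T^\beta_{w_n^{-1}}$ merely permutes these $2n$ blocks—scrambling the vectors inside a block via $\sigma$ but preserving the degree of each block—since every elementary factor $\beta_{pq}=T^\sigma_{\chi_{pq}}$ of this braid-group element carries $V^{\otimes p}\underline{\otimes}V^{\otimes q}$ to $V^{\otimes q}\underline{\otimes}V^{\otimes p}$. Consequently
$$\bigtriangleup_{\beta,n}(W)\subseteq\bigoplus_{0\leq a_j\leq i_j}\big(V^{\otimes a_1}\otimes\cdots\otimes V^{\otimes a_n}\big)\otimes\big(V^{\otimes(i_1-a_1)}\otimes\cdots\otimes V^{\otimes(i_n-a_n)}\big),$$
with the extreme summands $(a_j)=(i_j)$ and $(a_j)=(0)$ being exactly $x\mapsto x\otimes 1_C$ and $x\mapsto 1_C\otimes x$ respectively—here one uses that $\beta$ degenerates to the flip when one block is empty, so no genuine braiding of $V$-vectors intervenes in these two summands, together with the group-likeness of $1$ for $\delta$. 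Removing them, every remaining summand has left leg of total degree $t=\sum_j a_j$ with $1\leq t\leq s-1$ and right leg of total degree $s-t$, so both legs lie in $G_{s-1}\subseteq G_{r-1}$ whenever $W\subseteq G_r$; summing over all such blocks gives $\overline{\bigtriangleup_{\beta,n}}(G_r)\subseteq G_{r-1}\otimes G_{r-1}$.

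The main thing to be careful about—the only step that is not pure degree bookkeeping—is the claim that $T^\beta_{w_n^{-1}}$ respects the block decomposition coming out of $\delta^{\otimes n}$ and only permutes blocks of prescribed degrees. This rests on the definition of $\beta$ as the operator equal to $T^\sigma_{\chi_{pq}}$ on each $V^{\otimes p}\underline{\otimes}V^{\otimes q}$ together with the fact, already used in the excerpt, that $\beta$ is a genuine braiding on $T(V)$, so that $T^\beta_{w_n^{-1}}$ is a well-defined element of the braid group independent of the chosen reduced expression. Once this structural observation is granted, and once the two extreme summands are correctly identified with $x\otimes 1_C$ and $1_C\otimes x$, the length count above closes the argument.
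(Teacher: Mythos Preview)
Your proposal is correct and follows essentially the same approach as the paper: both use the total-degree filtration $G_r=\bigoplus_{i_1+\cdots+i_n\leq r}V^{\otimes i_1}\underline{\otimes}\cdots\underline{\otimes}V^{\otimes i_n}$. The paper simply asserts (without argument) that this coincides with the coradical filtration $F_r$, whereas you carefully establish the inclusion $G_r\subseteq F_r$ that suffices for connectedness; your argument thus supplies exactly the details the paper omits.
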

\begin{proof}Obviously, $1^{\underline{\otimes} n}$ is a group-like element of $T^c(V)^{\otimes n} $. For any $r\geq 0$, we have that $$F_r=F_r(T^c(V)^{\otimes n} )=\bigoplus_{0\leq i_1+\cdots+i_n\leq r}V^{\otimes i_1}\underline{\otimes}\cdots\underline{\otimes}V^{\otimes i_n}.$$
\end{proof}
From now on, we use $\bigtriangleup_{\beta}$ to denote
$\bigtriangleup_{\beta,2}$ when $n=2$. Since $w_2^{-1}=s_2\in
\mathfrak{S}_4$,
$\bigtriangleup_\beta=(\mathrm{id}_{T^c(V)}\otimes \beta\otimes
\mathrm{id}_{T^c(V)})\circ (\delta\otimes\delta)$.

Let $M=\oplus M_{pq}: T^c(V)\underline{\otimes} T^c(V)\rightarrow
V$ be a linear map such that $M_{pq}:V^{\otimes
p}\underline{\otimes }V^{\otimes q}\rightarrow V$, and \[\left\{
\begin{array}{lllll}
M_{00}&=&0,&& \\
M_{10}&=&\mathrm{id}_V&=&M_{01},\\
M_{n0}&=&0&=&M_{0n},\ \mathrm{for}\ n\geq 2.
\end{array} \right.
\]

Since $M(1\underline{\otimes} 1)=0$, there is a unique coalgebra
map $\ast: T^c(V)\underline{\otimes} T^c(V)\rightarrow T^c(V)$ by
the universal property of $T^c(V)$. Explicitly,
$$\ast=(\varepsilon\otimes \varepsilon)+\sum_{n\geq 1}M^{\otimes
n}\circ \overline{\bigtriangleup_\beta}^{(n-1)}.$$ We shall
investigate conditions under which $\ast$ is an associative
product. Here we start by giving another form of $\ast$ by using
the map $M$ and the deconcatenation $\delta$.

\begin{proposition}For $n\geq 0$, we have that \begin{eqnarray*}\bigtriangleup_\beta^{(n)}=T_{w_{n+1}}^\beta\circ (\delta^{(n)})^{\otimes 2}.\end{eqnarray*} \end{proposition}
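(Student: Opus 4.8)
The plan is an induction on $n$. For $n=0$ both sides equal $\mathrm{id}_{T^c(V)^{\otimes 2}}$, since $w_1=\mathrm{id}\in\mathfrak{S}_2$ and $\delta^{(0)}=\mathrm{id}$; for $n=1$ the statement is exactly the definition $\bigtriangleup_\beta=T^\beta_{w_2^{-1}}\circ\delta^{\otimes 2}$, because $w_2=s_2$ is an involution so $w_2^{-1}=w_2$. For the inductive step I would use that $(T^c(V)^{\otimes 2},\bigtriangleup_\beta)$ is a coalgebra (Proposition 2.2), so $\bigtriangleup_\beta$ is coassociative and $\bigtriangleup_\beta^{(n)}$ may be computed as $(\mathrm{id}_{T^c(V)}^{\otimes 2(n-1)}\otimes\bigtriangleup_\beta)\circ\bigtriangleup_\beta^{(n-1)}$, i.e.\ by applying $\bigtriangleup_\beta$ to the last $T^c(V)^{\otimes 2}$-factor of $\bigtriangleup_\beta^{(n-1)}(x)\in(T^c(V)^{\otimes 2})^{\otimes n}$. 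Substituting the inductive hypothesis $\bigtriangleup_\beta^{(n-1)}=T^\beta_{w_n}\circ(\delta^{(n-1)})^{\otimes 2}$ and $\bigtriangleup_\beta=T^\beta_{s_2}\circ(\delta\otimes\delta)$ turns the claim into an identity between two composites of copies of $\delta$ and braid operators $T^\beta_w$. Using coassociativity of $\delta$ in the form $\delta^{(n)}=(\mathrm{id}^{\otimes(n-1)}\otimes\delta)\circ\delta^{(n-1)}$ and precomposing with $(\delta^{(n-1)})^{\otimes 2}$, one reduces to proving the following equality of maps $T^c(V)^{\otimes 2n}\to T^c(V)^{\otimes(2n+2)}$:
\[
T^\beta_{w_{n+1}}\circ\bigl((\mathrm{id}^{\otimes(n-1)}\otimes\delta)\otimes(\mathrm{id}^{\otimes(n-1)}\otimes\delta)\bigr)=(\mathrm{id}^{\otimes 2(n-1)}\otimes T^\beta_{s_2})\circ(\mathrm{id}^{\otimes 2(n-1)}\otimes\delta\otimes\delta)\circ T^\beta_{w_n}.
\]

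On the right side the two extra deconcatenations are applied, after $T^\beta_{w_n}$, to the last tensor slot and to the one just before it, i.e.\ to the slots carrying the images under $w_n$ of input positions $2n$ and $n$ (the last deconcatenation factors of the two legs). Since $w_n$ fixes $1$ and $2n$ it lies in the parabolic subgroup $\langle s_2,\dots,s_{2n-2}\rangle$, so its reduced words avoid $s_1$ and $s_{2n-1}$; hence $T^\beta_{w_n}$ is the identity on the first and last $T^c(V)$-factors, the $\delta$ on the last slot passes across $T^\beta_{w_n}$ for free, and the $\delta$ on the penultimate slot passes across at the cost of cabling that strand, using the compatibility identity $\beta_{i+j,k}=(\beta_{ik}\otimes\mathrm{id}_V^{\otimes j})(\mathrm{id}_V^{\otimes i}\otimes\beta_{jk})$ with all expressions reduced, established just before Theorem 3.2. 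After these moves both sides have the form of a single braid operator composed with $(\delta^{(n)})^{\otimes 2}$, so what remains is an identity of braid lifts; as $T^\beta$ is multiplicative on length-additive products, this follows from the combinatorial fact in $\mathfrak{S}_{2n+2}$
\[
w_{n+1}=s_{2n}\cdot u_n,\qquad u_n=[\,1,3,5,\dots,2n-1,\;2n,\;2,4,\dots,2n-2,\;2n+1,2n+2\,],
\]
where $u_n$ is $w_n$ with the two strands at positions $n$ and $2n$ cabled, $s_{2n}$ is the $s_2$ coming from $\bigtriangleup_\beta$, and $T^\beta_{u_n}$ is the cabled $T^\beta_{w_n}$; reducedness amounts to the length count $l(w_{n+1})=\binom{n+1}{2}=\binom{n}{2}+(n-1)+1=l(u_n)+1$. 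Hence $T^\beta_{w_{n+1}}=T^\beta_{s_{2n}}\circ T^\beta_{u_n}$, which closes the induction.

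I expect the main obstacle to be not any single ingredient but the bookkeeping that glues them: tracking how many tensor factors are present at each stage, checking carefully that the two extra deconcatenations really commute past $T^\beta_{w_n}$ in the stated way (where the $\beta$--$\delta$ compatibility and the stabilizer property of $w_n$ enter), and confirming the length additivity $l(w_{n+1})=l(u_n)+1$ so that braid lifts compose without correction. One must also watch conventions, in particular that it is $w_{n+1}$ rather than $w_{n+1}^{-1}$ that appears; this is consistent precisely because $w_2$ is an involution, so the base case $n=1$ does not detect the distinction and the inductive step must reproduce the non-inverted form.
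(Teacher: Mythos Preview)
Your argument is correct and rests on the same ingredients as the paper's proof: induction on $n$, the YB-coalgebra compatibility between $\delta$ and $\beta$ to pass deconcatenations through braid operators, and a length-additive factorization in the symmetric group to conclude. The difference is only in the bookkeeping. The paper inducts by applying $\bigtriangleup_\beta$ to the \emph{first} tensor factor, $\bigtriangleup_\beta^{(n+1)}=(\bigtriangleup_\beta\otimes\mathrm{id}^{\otimes 2n})\bigtriangleup_\beta^{(n)}$, and uses the two factorizations $w_{n+1}=(1_{\mathfrak{S}_2}\times w_n)\,s_2\cdots s_{n+1}$ and $w_{n+2}=s_2(1_{\mathfrak{S}_4}\times w_n)\,s_4s_3s_5s_4\cdots s_{n+3}s_{n+2}$, which is why it treats $n=2$ separately before the general step. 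Your choice to apply $\bigtriangleup_\beta$ to the \emph{last} factor exploits that $w_n$ fixes both $1$ and $2n$, so one of the two extra $\delta$'s commutes past $T^\beta_{w_n}$ for free and only the strand through input $n$ needs to be cabled; the resulting single factorization $w_{n+1}=s_{2n}\cdot u_n$ with $l(u_n)=\binom{n}{2}+(n-1)$ handles all $n\geq 1$ uniformly. One small remark: the identity you invoke for the cabling step is really the YB-coalgebra compatibility $(\mathrm{id}\otimes\delta)\beta=\beta_1\beta_2(\delta\otimes\mathrm{id})$ (and its mirror), which in the paper is \emph{derived from} the cited relation $\beta_{i+j,k}=(\beta_{ik}\otimes\mathrm{id}^{\otimes j})(\mathrm{id}^{\otimes i}\otimes\beta_{jk})$; citing the former directly would make the cabling step more transparent.
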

\begin{proof}We use induction on $n$.

When $n=0$, it is trivial since $w_1=1_{\mathfrak{S}_2}$.

When $n=1$,
$\bigtriangleup_\beta^{(1)}=\bigtriangleup_\beta=\beta_2(\delta\otimes\delta)=T_{w_{2}}^\beta\circ
(\delta^{(1)})^{\otimes 2}$ since $w_2=s_2$.

When $n=2$, \begin{eqnarray*}
\bigtriangleup_\beta^{(2)}&=&(\bigtriangleup_\beta\otimes
\mathrm{id}_{T^c(V)}\otimes
\mathrm{id}_{T^c(V)})\bigtriangleup_\beta\\[3pt]
&=&\beta_2(\delta\otimes\delta\otimes \mathrm{id}_{T^c(V)}\otimes \mathrm{id}_{T^c(V)})\beta_2(\delta\otimes\delta)\\[3pt]
&=&\beta_2\big(\delta\otimes(\delta\otimes \mathrm{id}_{T^c(V)})\beta\otimes \mathrm{id}_{T^c(V)}\big)\circ(\delta\otimes\delta)\\[3pt]
&=&\beta_2\big(\delta\otimes\beta_2\beta_1(\mathrm{id}_{T^c(V)}\otimes \delta)\otimes \mathrm{id}_{T^c(V)}\big)\circ(\delta\otimes\delta)\\[3pt]
&=&\beta_2\beta_4\beta_3(\delta\otimes \mathrm{id}_{T^c(V)}\otimes \delta\otimes \mathrm{id}_{T^c(V)})\circ(\delta\otimes\delta)\\[3pt]
&=&T_{w_{3}}^\beta\circ (\delta^{(2)})^{\otimes 2}.
\end{eqnarray*}
For $n\geq 3$, \begin{eqnarray*}
\bigtriangleup_\beta^{(n+1)}&=&(\bigtriangleup_\beta\otimes \mathrm{id}_{T^c(V)}^{\otimes 2n})\bigtriangleup_\beta^{(n)}\\[3pt]
&=&\beta_2(\delta\otimes\delta\otimes \mathrm{id}_{T^c(V)}^{\otimes 2n})T_{w_{n+1}}^\beta\circ (\delta^{(n)})^{\otimes 2}\\[3pt]
&=&\beta_2(\delta\otimes\delta\otimes \mathrm{id}_{T^c(V)}^{\otimes 2n})(\mathrm{id}_{T^c(V)}^{\otimes 2}\otimes T_{w_{n}}^\beta)\beta_1\cdots\beta_{n+1}\circ (\delta^{(n)})^{\otimes 2}\\[3pt]
&=&\beta_2(\mathrm{id}_{T^c(V)}^{\otimes 2}\otimes T_{w_{n}}^\beta)(\delta\otimes\delta\otimes \mathrm{id}_{T^c(V)}^{\otimes 2n})\beta_1\cdots\beta_{n+1}\circ (\delta^{(n)})^{\otimes 2}\\[3pt]
&=&\beta_2(\mathrm{id}_{T^c(V)}^{\otimes 2}\otimes T_{w_{n}}^\beta)\beta_4\beta_3\beta_5\beta_4\cdots\beta_{n+3}\beta_{n+2}\\[3pt]
&&\circ(\delta\otimes\otimes \mathrm{id}_{T^c(V)}^{\otimes n}\otimes\delta\otimes \mathrm{id}_{T^c(V)}^{\otimes n})\circ (\delta^{(n)})^{\otimes 2}\\[3pt]
&=&T_{w_{n+2}}^\beta\circ (\delta^{(n+1)})^{\otimes 2}.
\end{eqnarray*}The third and last equalities follow from the fact that $w_{n+1}=(1_{\mathfrak{S}_2}\times w_n)s_2\cdots s_{n+1}$ for $n\geq 1$, $w_{n+2}=s_2(1_{\mathfrak{S}_4}\times w_n)s_4 s_3 s_5 s_4\cdots s_{n+3}s_{n+2}$ for $n\geq 3$ and both expressions are reduced.\end{proof}

\begin{lemma}For $n\geq 1$, we have $M^{\otimes n}\bigtriangleup_\beta^{(n-1)}(1\underline{\otimes }1)=0$. \end{lemma}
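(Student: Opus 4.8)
The plan is to use the fact that $1\underline{\otimes}1$ is a group-like element of the coalgebra $(T^c(V)\underline{\otimes}T^c(V),\bigtriangleup_\beta)$ and that $M$ annihilates it because $M_{00}=0$. First I would record that $1$ is group-like in $T^c(V)$, i.e.\ $\delta(1)=1\underline{\otimes}1$ and $\varepsilon(1)=1$. Since $\bigtriangleup_\beta=(\mathrm{id}_{T^c(V)}\otimes\beta\otimes\mathrm{id}_{T^c(V)})\circ(\delta\otimes\delta)$ and, by the convention fixed in Section~2, $\beta_{00}$ is the flip of $V^{\otimes 0}\underline{\otimes}V^{\otimes 0}$ — hence the identity on that one-dimensional piece — one gets immediately $\bigtriangleup_\beta(1\underline{\otimes}1)=(1\underline{\otimes}1)\otimes(1\underline{\otimes}1)$ and $(\varepsilon\otimes\varepsilon)(1\underline{\otimes}1)=1$ (this is also noted in the proof of Proposition~4.5).

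Next I would iterate. Using $\bigtriangleup_\beta^{(n)}=(\bigtriangleup_\beta\otimes\mathrm{id}_{T^c(V)}^{\otimes 2(n-1)})\bigtriangleup_\beta^{(n-1)}$, a trivial induction on $n$ shows
$$\bigtriangleup_\beta^{(n-1)}(1\underline{\otimes}1)=(1\underline{\otimes}1)^{\otimes n}\qquad\text{for all }n\geq1.$$
(Alternatively one may invoke Proposition~4.6: $\bigtriangleup_\beta^{(n-1)}(1\underline{\otimes}1)=T^\beta_{w_n}\big((\delta^{(n-1)})^{\otimes 2}(1\underline{\otimes}1)\big)$, and since $\delta^{(n-1)}(1)=1^{\otimes n}$ the braiding $T^\beta_{w_n}$ merely permutes the $2n$ copies of $1$, so the result lies in $(V^{\otimes 0}\underline{\otimes}V^{\otimes 0})^{\otimes n}$.)

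Finally, since $1\underline{\otimes}1\in V^{\otimes 0}\underline{\otimes}V^{\otimes 0}$ and $M_{00}=0$, we have $M(1\underline{\otimes}1)=0$, and therefore
$$M^{\otimes n}\bigtriangleup_\beta^{(n-1)}(1\underline{\otimes}1)=M^{\otimes n}\big((1\underline{\otimes}1)^{\otimes n}\big)=\big(M(1\underline{\otimes}1)\big)^{\otimes n}=0;$$
for $n=1$ this is just $M(1\underline{\otimes}1)=0$, which is also the base of the induction. I do not expect any real difficulty here: the only thing to watch is the bookkeeping of which tensor slots are involved and the identification of $\beta_{00}$ with the trivial flip, both of which are already settled by the conventions of the paper.
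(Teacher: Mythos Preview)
Your proof is correct and follows exactly the same approach as the paper: the paper's own proof simply states that it follows from $\bigtriangleup_\beta^{(n-1)}(1\underline{\otimes}1)=(1\underline{\otimes}1)^{\otimes n}$ and $M_{00}=0$, which is precisely what you establish (with added detail on why $1\underline{\otimes}1$ is group-like).
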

\begin{proof}It follows from the fact that $\bigtriangleup_\beta^{(n-1)}(1\underline{\otimes }1)=(1\underline{\otimes }1)^{\otimes n}$ and $M_{00}=0$.\end{proof}

\begin{proposition}For $n\geq 1$, we have $M^{\otimes n}\overline{\bigtriangleup_\beta}^{(n-1)}=M^{\otimes n}\bigtriangleup_\beta^{(n-1)}$\end{proposition}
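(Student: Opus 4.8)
The statement says $M^{\otimes n}\overline{\bigtriangleup_\beta}^{(n-1)}=M^{\otimes n}\bigtriangleup_\beta^{(n-1)}$, i.e. that replacing the iterated reduced coproduct by the iterated full coproduct makes no difference once we post-compose with $M^{\otimes n}$. The point is that the two iterated coproducts differ by terms containing a tensor factor equal to the group-like element $1$ (in the appropriate slot), and every such term is killed by $M$ in that slot because $M_{00}=0$ and, crucially, $M_{n0}=0=M_{0n}$ for $n\ge 2$ while $M_{10}=M_{01}=\mathrm{id}_V$ still must be reconciled. So the heart of the argument is bookkeeping about which ``$1$ appears'' terms survive.

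First I would set up the comparison for small $n$ to see the mechanism: for $n=1$ the claim is the tautology $M=M$. For $n=2$ we have $\overline{\bigtriangleup_\beta}(x)=\bigtriangleup_\beta(x)-x\underline{\otimes}1^{\underline{\otimes}2}-1^{\underline{\otimes}2}\underline{\otimes}x$ (using that $1^{\underline{\otimes}2}$ is the group-like element of $T^c(V)^{\otimes 2}$ by Proposition 4.6), so $M^{\otimes 2}\overline{\bigtriangleup_\beta}=M^{\otimes 2}\bigtriangleup_\beta-M^{\otimes 2}(\mathrm{id}\otimes(1^{\underline{\otimes}2}))-M^{\otimes 2}((1^{\underline{\otimes}2})\otimes\mathrm{id})$. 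On $V^{\otimes p}\underline{\otimes}V^{\otimes q}$, the element $1^{\underline{\otimes}2}=1\underline{\otimes}1$ lies in $V^{\otimes 0}\underline{\otimes}V^{\otimes 0}$, so $M$ applied to it is $M_{00}=0$; hence both correction terms vanish and the $n=2$ case follows. The general case will be an induction using $\overline{\bigtriangleup_\beta}^{(n)}=(\mathrm{id}\otimes\overline{\bigtriangleup_\beta}^{(n-1)})\overline{\bigtriangleup_\beta}$ (coassociativity, Proposition 4.2.1) against $\bigtriangleup_\beta^{(n)}=(\mathrm{id}^{\otimes 2}\otimes\bigtriangleup_\beta^{(n-1)})\bigtriangleup_\beta$ — note the slight index mismatch here: $\bigtriangleup_\beta$ lands in $T^c(V)^{\otimes 2}\underline{\otimes}T^c(V)^{\otimes 2}$, so I would be careful to phrase everything on $T^c(V)\underline{\otimes}T^c(V)$ and track the $2$-fold tensor bookkeeping that already appears in Proposition 4.7.

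The cleanest route is: write $\overline{\bigtriangleup_\beta}=\bigtriangleup_\beta-\iota_L-\iota_R$ where $\iota_L(x)=1^{\underline{\otimes}2}\underline{\otimes}x$ and $\iota_R(x)=x\underline{\otimes}1^{\underline{\otimes}2}$, expand $\overline{\bigtriangleup_\beta}^{(n-1)}$ by iterated substitution, and observe that every resulting summand other than $\bigtriangleup_\beta^{(n-1)}$ itself is of the form $(\text{something})$ with at least one of the $n$ output tensor slots equal to $1^{\underline{\otimes}2}$ — more precisely, equal to an element of $V^{\otimes 0}\underline{\otimes}V^{\otimes 0}$. Applying $M^{\otimes n}$ then applies $M_{00}=0$ in that slot, killing the summand. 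The one subtlety to check is that when we peel off, say, $\iota_R$ in an inner position, coassociativity together with counitality of $\delta$ (the fact that $1$ is group-like and $\varepsilon(1)=1$) guarantees the ``$1$'' really does sit in a single slot of the $n$-fold output rather than getting mixed by $\beta$; here I would invoke that $\beta_{0i}$ and $\beta_{i0}$ are the flip (as stated when $\beta$ was defined), so $\beta$ moves the group-like factor $1$ through without creating lower-degree debris, and Proposition 4.7's formula $\bigtriangleup_\beta^{(n)}=T_{w_{n+1}}^\beta\circ(\delta^{(n)})^{\otimes 2}$ makes the degree in each slot transparent.

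**Main obstacle.** The genuine work is not the vanishing-by-$M_{00}$ idea, which is immediate, but the combinatorial verification that the iterated reduced coproduct expands into exactly $\bigtriangleup_\beta^{(n-1)}$ plus terms each carrying a degree-$(0,0)$ slot — the $\beta$'s permute the tensor factors, so one must be sure that no cancellation or recombination produces a surviving term with all slots of positive total degree other than the main one. I expect this is handled exactly as in the proof of Proposition 4.7, by using the reducedness of the relevant permutation words and the fact that $\beta$ restricts to the flip on factors of degree $0$; so I would either cite that proof's method or run the same induction, reducing the claim to: ``in $\overline{\bigtriangleup_\beta}^{(n-1)}-\bigtriangleup_\beta^{(n-1)}$ every summand factors through $\mathrm{id}^{\otimes a}\otimes(V^{\otimes 0}\underline{\otimes}V^{\otimes 0})\otimes\mathrm{id}^{\otimes(n-1-a)}$ for some $a$,'' and then kill each such summand with $M_{00}=0$.
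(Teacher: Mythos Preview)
Your approach is correct in principle and would go through, but the paper takes a shorter route that sidesteps the combinatorial obstacle you flag. Rather than fully expanding $\overline{\bigtriangleup_\beta}^{(n-1)}$ into $\bigtriangleup_\beta^{(n-1)}$ plus many correction terms and then tracking a degree-$(0,0)$ slot through the $\beta$'s, the paper runs a straight induction on $n$, peeling off a single $\overline{\bigtriangleup_\beta}$ at the outer layer:
\[
M^{\otimes n}\overline{\bigtriangleup_\beta}^{(n-1)}
=\big((M^{\otimes n-1}\overline{\bigtriangleup_\beta}^{(n-2)})\otimes M\big)\circ\overline{\bigtriangleup_\beta}.
\]
The induction hypothesis replaces $M^{\otimes n-1}\overline{\bigtriangleup_\beta}^{(n-2)}$ by $M^{\otimes n-1}\bigtriangleup_\beta^{(n-2)}$, and then expanding the one remaining $\overline{\bigtriangleup_\beta}=\bigtriangleup_\beta-\iota_L-\iota_R$ produces exactly two correction terms: one has $1\underline{\otimes}1$ in the \emph{last} slot and is killed on the spot by $M_{00}=0$; the other feeds $1\underline{\otimes}1$ into the \emph{first} $n-1$ slots and is killed by the preceding lemma, which records that $M^{\otimes n-1}\bigtriangleup_\beta^{(n-2)}(1\underline{\otimes}1)=0$ (since $1\underline{\otimes}1$ is group-like and $M_{00}=0$). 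Your full-expansion argument would reach the same conclusion, but the bookkeeping you worry about---verifying that the group-like factor survives unmixed through iterated applications of $\beta$---is precisely what the paper avoids by packaging it into that one-line lemma, so that at each inductive step only two correction terms appear and no $\beta$-tracking is ever needed.
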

\begin{proof}We use induction on $n$.

When $n=1$, it is trivial.

For $n\geq 2$ any $u,v\in T^c(V)$,\begin{eqnarray*}
\lefteqn{M^{\otimes n}\overline{\bigtriangleup_\beta}^{(n-1)}}\\
&=&\big((M^{\otimes n-1}\overline{\bigtriangleup_\beta}^{(n-2)})\otimes M\big)\overline{\bigtriangleup_\beta}(u\underline{\otimes }v)\\
&=&\big((M^{\otimes n-1}\bigtriangleup_\beta^{(n-2)})\otimes M\big)\Big(\bigtriangleup_\beta(u\underline{\otimes }v)-(1\underline{\otimes }1)\underline{\otimes}(u\underline{\otimes }v)-(u\underline{\otimes }v)\underline{\otimes}(1\underline{\otimes }1)\Big)\\
&=&M^{\otimes n}\bigtriangleup_\beta^{(n-1)}(u\underline{\otimes }v)-\big(M^{\otimes n-1}\bigtriangleup_\beta^{(n-2)}(1\underline{\otimes }1)\big)\underline{\otimes}M_{11}(u\underline{\otimes }v)\\
&&-\big(M^{\otimes n-1}\bigtriangleup_\beta^{(n-2)}(u\underline{\otimes }v)\big)\underline{\otimes}M_{00}(1\underline{\otimes }1)\\
&=&M^{\otimes n}\bigtriangleup_\beta^{(n-1)}(u\underline{\otimes
}v).
\end{eqnarray*}\end{proof}

From this lemma, the map $\ast$ defined by $M_{pq}$'s can be
rewritten as $\ast=\varepsilon\otimes \varepsilon+\sum_{r\geq
1}M^{\otimes r}\circ \bigtriangleup_\beta^{(r-1)}$. And we have
the following formula immediately.

\begin{corollary}We can rewrite $\ast$ as  \begin{eqnarray*}\ast=\varepsilon\otimes\varepsilon+\sum_{n\geq 1}M^{\otimes n}\circ T_{w_{n}}^\beta\circ (\delta^{(n-1)})^{\otimes 2}.\end{eqnarray*}\end{corollary}

But this $\ast$ is not an associative product on $T^c(V)$ in
general. Now we will generalize the notion of braided algebras by
giving some compatibility conditions between $M_{pq}$'s and the
braiding, and prove that under these conditions the new object
makes $\ast$ to be associative automatically and $T^c(V)$ becomes
a braided algebra with $\ast$.

\begin{definition}A \emph{quantum multi-brace algebra} $(V,M,\sigma)$ is a braided vector space $(V,\sigma)$ equipped with a operation $M=\oplus M_{pq}$, where $$M_{pq}:V^{\otimes p}\otimes V^{\otimes q}\rightarrow
V,\ \ p\geq 0,\ q\geq 0,$$satisfying

1.
\[\left\{
\begin{array}{lllll}
M_{00}&=&0,&& \\
M_{10}&=&\mathrm{id}_V&=&M_{01},\\
M_{n0}&=&0&=&M_{0n},\ \mathrm{for}\ n\geq 2,
\end{array} \right.
\]

2. braid condition: for any $i,j,k\geq 1$,
\[\left\{
\begin{array}{lll}
\beta_{1k}(M_{ij}\otimes \mathrm{id}_V^{\otimes k})&=&(
\mathrm{id}_V^{\otimes k}\otimes M_{ij})\beta_{i+j,k} , \\[3pt]
\beta_{i1}(\mathrm{id}_V^{\otimes i}\otimes
M_{jk})&=&(M_{jk}\otimes \mathrm{id}_V^{\otimes i} )\beta_{i,j+k},
\end{array} \right.
\]

3. associativity condition: for any triple $(i,j,k)$ of positive
integers,
\begin{eqnarray*}
\lefteqn{\sum_{r=1}^{i+j}M_{rk}\circ \big((M^{\otimes r}\circ
\bigtriangleup_\beta^{(r-1)})\otimes
\mathrm{id}_V^{\otimes k}\big)}\\
&=&\sum_{l=1}^{j+k}M_{il}\circ \big(\mathrm{id}_V^{\otimes
i}\otimes( M^{\otimes l}\circ \bigtriangleup_\beta^{(l-1)})\big).
\end{eqnarray*}
\end{definition}
\begin{remark}For any vector space $V$, $(V,\tau)$ is always a braided vector space with the usual flip $\tau$. In this case, the braid condition in the above definition holds automatically, and the quantum multi-brace algebra returns to the classical $\textbf{B}_\infty$-algebra (for the definition of $\textbf{B}_\infty$-algebras, one can see \cite{LoR}).\end{remark}

\begin{example} 1. A braided vector space $(V,\sigma)$ is a
quantum multi-brace algebra with $M_{ij}=0$ except for the pairs
$(1,0)$ and $(0,1)$.

2. A braided algebra $(A,m,\sigma)$ is a quantum multi-brace
algebra with $M_{11}=m$ and $M_{ij}=0$ except for the pairs
$(1,0)$, $(0,1)$ and $(1,1)$.
\end{example}
In the following, we adopt the notation $M_{(i_1,j_1,\ldots,
i_k,j_k)}=M_{i_1 j_1}\otimes\cdots\otimes M_{i_k j_k}$.

\begin{lemma}Let $(V,M,\sigma)$ be a quantum multi-brace algebra. Then for any $k,l\geq 1$, we have\[\left\{
\begin{array}{lll}
\beta_{kl}(M_{(i_1,j_1,\ldots, i_k,j_k)}\otimes
\mathrm{id}_V^{\otimes l})&=&(\mathrm{id}_V^{\otimes l}\otimes
M_{(i_1,j_1,\ldots,
i_k,j_k)})\beta_{i_1+j_1+\cdots+i_k+j_k,l} ,\\[5pt]
\beta_{lk}(\mathrm{id}_V^{\otimes l}\otimes M_{(i_1,j_1,\ldots,
i_k,j_k)} )&=&( M_{(i_1,j_1,\ldots,
i_k,j_k)}\otimes\mathrm{id}_V^{\otimes
l})\beta_{l,i_1+j_1+\cdots+i_k+j_k}.
\end{array} \right.
\] \end{lemma}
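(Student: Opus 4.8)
The plan is to prove the first identity by induction on $k$; the second is entirely analogous. For $k=1$ we have $M_{(i_1,j_1)}=M_{i_1j_1}$, and the asserted equality $\beta_{1l}(M_{i_1j_1}\otimes\mathrm{id}_V^{\otimes l})=(\mathrm{id}_V^{\otimes l}\otimes M_{i_1j_1})\beta_{i_1+j_1,l}$ is exactly the first Yang-Baxter condition in the definition of a quantum $\textbf{B}_\infty$-algebra (with the dummy index $k$ there renamed $l$); in the degenerate cases $i_1=0$ or $j_1=0$ it is immediate from condition~1 of that definition, which pins down what these $M$'s are.

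For the inductive step, put $N=i_1+j_1+\cdots+i_k+j_k$, $P=i_1+j_1+\cdots+i_{k-1}+j_{k-1}$ and $q=i_k+j_k$, so that $M_{(i_1,j_1,\ldots,i_k,j_k)}=M_{(i_1,j_1,\ldots,i_{k-1},j_{k-1})}\otimes M_{i_kj_k}$ and $N=P+q$. First I would split the braiding using the compatibility relations for $\beta$ established in Section~2: $\beta_{kl}=(\beta_{k-1,l}\otimes\mathrm{id}_V)(\mathrm{id}_V^{\otimes k-1}\otimes\beta_{1l})$. Composing with $M_{(i_1,\ldots,j_{k-1})}\otimes M_{i_kj_k}\otimes\mathrm{id}_V^{\otimes l}$ and using functoriality of $\otimes$, the inner composite becomes $M_{(i_1,\ldots,j_{k-1})}\otimes\bigl(\beta_{1l}(M_{i_kj_k}\otimes\mathrm{id}_V^{\otimes l})\bigr)$, so the base case applies to the second tensor factor and turns it into $M_{(i_1,\ldots,j_{k-1})}\otimes\bigl((\mathrm{id}_V^{\otimes l}\otimes M_{i_kj_k})\beta_{ql}\bigr)$. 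Then I would slide $M_{i_kj_k}$ past $\beta_{k-1,l}$, which act on disjoint blocks of tensor factors, and invoke the induction hypothesis in the form $\beta_{k-1,l}(M_{(i_1,\ldots,j_{k-1})}\otimes\mathrm{id}_V^{\otimes l})=(\mathrm{id}_V^{\otimes l}\otimes M_{(i_1,\ldots,j_{k-1})})\beta_{Pl}$. Finally the leftover braidings recombine via the same compatibility relation, $(\beta_{Pl}\otimes\mathrm{id}_V^{\otimes q})(\mathrm{id}_V^{\otimes P}\otimes\beta_{ql})=\beta_{P+q,l}=\beta_{Nl}$, and since $M_{(i_1,\ldots,j_{k-1})}\otimes M_{i_kj_k}=M_{(i_1,\ldots,i_k,j_k)}$ one reads off the right-hand side $(\mathrm{id}_V^{\otimes l}\otimes M_{(i_1,\ldots,i_k,j_k)})\beta_{Nl}$.

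The second identity is proved the same way, inducting on $k$ but peeling off the first factor $M_{i_1j_1}$, using the second Yang-Baxter condition as the base case and the mirror-image decompositions of $\beta$ (which are again reduced words in the $\sigma_i$'s). I do not expect any conceptual difficulty here: the proof is essentially bookkeeping of which block of tensor factors each operator occupies, and the one point needing care at each stage is that the operator being moved — either $M_{i_kj_k}$ or $\beta_{k-1,l}$ — genuinely acts on a block disjoint from the other, so that functoriality of the tensor product licenses the rearrangement.
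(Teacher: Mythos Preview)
Your proposal is correct and matches the paper's own proof essentially line for line: induction on $k$, the decomposition $\beta_{kl}=(\beta_{k-1,l}\otimes\mathrm{id}_V)(\mathrm{id}_V^{\otimes k-1}\otimes\beta_{1l})$, the base case on the last factor $M_{i_kj_k}$, then the induction hypothesis on the first $k-1$ factors, and recombination of the $\beta$'s. The paper likewise disposes of the second identity with ``proved similarly''.
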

\begin{proof}We use induction on $k$.

The case $k=1$ is trivial.
\begin{eqnarray*}
\lefteqn{\beta_{k+1,l}(M_{(i_1,j_1,\ldots,
i_{k+1},j_{k+1})}\otimes \mathrm{id}_V^{\otimes
l})}\\[3pt]
&=&(\beta_{kl}\otimes \mathrm{id}_V)(\mathrm{id}_V^{\otimes
k}\otimes \beta_{1l})(M_{(i_1,j_1,\ldots, i_{k+1},j_{k+1})}\otimes
\mathrm{id}_V^{\otimes l})\\[3pt]
&=&(\beta_{kl}\otimes \mathrm{id}_V)\Big(M_{(i_1,j_1,\ldots,
i_{k},j_{k})}\otimes \beta_{1l}(M_{i_{k+1}j_{k+1}}\otimes
\mathrm{id}_V^{\otimes l})\Big)\\[3pt]
&=&(\beta_{kl}\otimes \mathrm{id}_V)\Big(M_{(i_1,j_1,\ldots,
i_{k},j_{k})}\otimes (\mathrm{id}_V^{\otimes l}\otimes
M_{i_{k+1}j_{k+1}}
)\beta_{i_{k+1}+j_{k+1},l}\Big)\\[3pt]
&=&\Big(\beta_{kl}(M_{(i_1,j_1,\ldots, i_{k},j_{k})}\otimes
\mathrm{id}_V^{\otimes l})\otimes
\mathrm{id}_V\Big)\\[3pt]
&&\circ(\mathrm{id}_V^{\otimes i_1+\cdots+i_k+l}\otimes
M_{i_{k+1}j_{k+1}} )(\mathrm{id}_V^{\otimes
i_1+\cdots+i_k}\otimes\beta_{i_{k+1}+j_{k+1},l})\\[3pt]
&=&\Big((\mathrm{id}_V^{\otimes l}\otimes M_{(i_1,j_1,\ldots,
i_k,j_k)})\beta_{i_1+j_1+\cdots+i_k+j_k,l} \otimes
\mathrm{id}_V\Big)\\[3pt]
&&\circ(\mathrm{id}_V^{\otimes i_1+\cdots+i_k+l}\otimes
M_{i_{k+1}j_{k+1}}
)(\mathrm{id}_V^{\otimes i_1+\cdots+i_k}\otimes\beta_{i_{k+1}+j_{k+1},l})\\[3pt]
&=&(\mathrm{id}_V^{\otimes l}\otimes M_{(i_1,j_1,\ldots,
i_{k+1},j_{k+1})})\\[3pt]
&&\circ(\beta_{i_1+j_1+\cdots+i_k+j_k,l} \otimes
\mathrm{id}_V)(\mathrm{id}_V^{\otimes i_1+\cdots+i_k}\otimes\beta_{i_{k+1}+j_{k+1},l})\\[3pt]
&=&(\mathrm{id}_V^{\otimes l}\otimes M_{(i_1,j_1,\ldots,
i_k,j_k)})\beta_{i_1+j_1+\cdots+i_k+j_k,l}.\\
\end{eqnarray*}The another equality is proved similarly.
\end{proof}

The following notation is adopted to simplify the identities. We
denote by $\bigtriangleup_{\beta\ (i_1,j_1, i_2,j_2)}$ the
composition of $\bigtriangleup_\beta:V^{\otimes
i_1+i_2}\underline{\otimes}V^{\otimes j_1+j_2}\rightarrow
\big(T(V)\underline{\otimes}T(V)\big)\underline{\otimes}\big(T(V)\underline{\otimes}T(V)\big)$
with the projection
$\big(T(V)\underline{\otimes}T(V)\big)\underline{\otimes}\big(T(V)\underline{\otimes}T(V)\big)\rightarrow
(V^{\otimes i_1}\underline{\otimes}V^{\otimes
j_1})\underline{\otimes}(V^{\otimes
i_2}\underline{\otimes}V^{\otimes j_2})$, and by
$$\bigtriangleup_{\beta\ (i_1,j_1,\ldots, i_k,j_k)}^{(k-1)}=(\bigtriangleup_{\beta\ (i_1,j_1,
i_2,j_2)}\otimes \mathrm{id}_V^{\otimes
i_3+j_3+\cdots+i_k+j_k})\circ\bigtriangleup_{\beta\
(i_1+i_2,j_1+j_2,i_3,j_3,\ldots, i_k,j_k)}^{(k-2)}$$ the map from
$V^{\otimes i_1+\cdots+i_k}\underline{\otimes}V^{\otimes
j_1+\cdots+j_k}$ to $V^{\otimes i_1}\underline{\otimes}V^{\otimes
j_1}\underline{\otimes}\cdots\underline{\otimes}V^{\otimes
i_k}\underline{\otimes}V^{\otimes j_k}$ inductively.

\begin{lemma} For any $k,l\geq 1$, we have\[\left\{
\begin{array}{lll}
\lefteqn{\beta_{i_1+j_1+\cdots+i_k+j_k,l}(\bigtriangleup_{\beta\
(i_1,j_1,\ldots, i_k,j_k)}^{(k-1)}\otimes \mathrm{id}_V^{\otimes
l})}\\[5pt]
&=&(\mathrm{id}_V^{\otimes l}\otimes \bigtriangleup_{\beta\
(i_1,j_1,\ldots,
i_k,j_k)}^{(k-1)})\beta_{i_1+j_1+\cdots+i_k+j_k,l} ,\\[5pt]
\lefteqn{\beta_{l,i_1+j_1+\cdots+i_k+j_k}(\mathrm{id}_V^{\otimes
l}\otimes \bigtriangleup_{\beta\ (i_1,j_1,\ldots,
i_k,j_k)}^{(k-1)})}\\[5pt]
&=&( \bigtriangleup_{\beta\ (i_1,j_1,\ldots,
i_k,j_k)}^{(k-1)}\otimes \mathrm{id}_V^{\otimes
l})\beta_{l,i_1+j_1+\cdots+i_k+j_k} .
\end{array} \right.
\]\end{lemma}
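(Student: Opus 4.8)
The plan is to prove the first identity by induction on $k$, the second one being obtained by an entirely symmetric computation. I would begin with a remark on degenerate indices: if $(i_a,j_a)=(0,0)$ for some $a$, then $\overline{\bigtriangleup_\beta}_{(i_1,j_1,\ldots,i_k,j_k)}^{(k-1)}=0$ — in the one-step factor indexed by $(\ldots,0,0,\ldots)$ the only term of $\bigtriangleup_\beta$ that would survive is exactly one of the two terms subtracted off in passing from $\bigtriangleup_\beta$ to $\overline{\bigtriangleup_\beta}$ — so both sides of the asserted identity vanish and there is nothing to prove. Hence I may assume $(i_a,j_a)\neq(0,0)$ for every $a$. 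In that case, since $\bigtriangleup_\beta=(\mathrm{id}_{T^c(V)}\otimes\beta\otimes\mathrm{id}_{T^c(V)})\circ(\delta\otimes\delta)$ and the only splitting of $V^{\otimes i_1+i_2}\underline{\otimes}V^{\otimes j_1+j_2}$ that contributes to the $(i_1,j_1,i_2,j_2)$-component is $\delta_{i_1,i_2}\otimes\delta_{j_1,j_2}$, one reads off the explicit formula
\[
\overline{\bigtriangleup_\beta}_{(i_1,j_1,i_2,j_2)}=\mathrm{id}_V^{\otimes i_1}\otimes\beta_{i_2,j_1}\otimes\mathrm{id}_V^{\otimes j_2},
\]
viewed as an endomorphism of $V^{\otimes i_1+i_2+j_1+j_2}$. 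The base case $k=1$ is trivial, since $\overline{\bigtriangleup_\beta}_{(i_1,j_1)}^{(0)}=\mathrm{id}$ and both sides of the identity equal $\beta_{i_1+j_1,l}$.

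The substantive case is $k=2$. Writing $a=i_1+j_1+i_2+j_2$, the claim is
\[
\beta_{a,l}\bigl(\overline{\bigtriangleup_\beta}_{(i_1,j_1,i_2,j_2)}\otimes\mathrm{id}_V^{\otimes l}\bigr)=\bigl(\mathrm{id}_V^{\otimes l}\otimes\overline{\bigtriangleup_\beta}_{(i_1,j_1,i_2,j_2)}\bigr)\beta_{a,l}.
\]
Since by the formula above $\overline{\bigtriangleup_\beta}_{(i_1,j_1,i_2,j_2)}$ acts nontrivially only on the middle $i_2+j_1$ tensor factors, I would iterate the relation $\beta_{p+q,l}=(\beta_{p,l}\otimes\mathrm{id}_V^{\otimes q})(\mathrm{id}_V^{\otimes p}\otimes\beta_{q,l})$ from Section~2 so as to confine one of the resulting factors of $\beta_{a,l}$ to precisely those $i_2+j_1$ strands; the remaining factors and the identity tensorands commute past $\overline{\bigtriangleup_\beta}_{(i_1,j_1,i_2,j_2)}$ because they operate on disjoint ranges of tensor slots. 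What is left is
\[
\beta_{i_2+j_1,l}\bigl(\beta_{i_2,j_1}\otimes\mathrm{id}_V^{\otimes l}\bigr)=\bigl(\mathrm{id}_V^{\otimes l}\otimes\beta_{i_2,j_1}\bigr)\beta_{i_2+j_1,l},
\]
which is the instance $w=\chi_{i_2,j_1}$, $w'=1_{\mathfrak{S}_l}$ of the identity $\chi_{ij}(w\times w')=(w'\times w)\chi_{ij}$ with reduced expressions used in the proof of Theorem~3.1.

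For $k\ge 3$, set $N=i_1+j_1+\cdots+i_k+j_k$, $a=i_1+j_1+i_2+j_2$, $b=N-a$, and put $D=\overline{\bigtriangleup_\beta}_{(i_1,j_1,i_2,j_2)}$ and $E=\overline{\bigtriangleup_\beta}_{(i_1+i_2,j_1+j_2,i_3,j_3,\ldots,i_k,j_k)}^{(k-2)}$, so that $\overline{\bigtriangleup_\beta}_{(i_1,j_1,\ldots,i_k,j_k)}^{(k-1)}=(D\otimes\mathrm{id}_V^{\otimes b})\circ E$ by definition. Splitting $\beta_{N,l}=(\beta_{a,l}\otimes\mathrm{id}_V^{\otimes b})(\mathrm{id}_V^{\otimes a}\otimes\beta_{b,l})$ and applying the $k=2$ case to slide $\beta_{a,l}$ past $D$ gives $\beta_{N,l}(D\otimes\mathrm{id}_V^{\otimes b+l})=(\mathrm{id}_V^{\otimes l}\otimes D\otimes\mathrm{id}_V^{\otimes b})\beta_{N,l}$; the induction hypothesis for the $(k-1)$-tuple $(i_1+i_2,j_1+j_2,i_3,j_3,\ldots,i_k,j_k)$, which has the same total $N$, gives $\beta_{N,l}(E\otimes\mathrm{id}_V^{\otimes l})=(\mathrm{id}_V^{\otimes l}\otimes E)\beta_{N,l}$; composing these two identities yields the assertion for $k$. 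The second identity of the lemma is proved by the same argument with left and right interchanged.

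The only real difficulty I anticipate is organizational rather than conceptual: one must keep scrupulous track of which ranges of tensor factors each $\beta_{p,q}$ and each $\mathrm{id}_V^{\otimes m}$ occupies while iterating the splitting of $\beta$, and one must check that each permutation factorization written down along the way is reduced, so that the corresponding operators $T^\sigma$ compose as expected. No ingredient beyond the braid relation for $\sigma$ and the explicit shape of $\bigtriangleup_\beta$ enters.
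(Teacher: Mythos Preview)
Your argument is correct, and the inductive step for $k\ge 3$ is essentially identical to the paper's. The difference lies in how the base case $k=2$ is handled. The paper does not write out $\overline{\bigtriangleup_\beta}_{(i_1,j_1,i_2,j_2)}$ explicitly; instead it invokes the preceding lemma (that in a YB coalgebra with a group-like element the \emph{reduced} coproduct is again compatible with the braiding) applied to $(T^c(V)^{\otimes 2},\bigtriangleup_\beta,T^\beta_{\chi_{22}})$, and then simply restricts the resulting identities to the graded piece $V^{\otimes i_1+i_2}\underline{\otimes}V^{\otimes j_1+j_2}\underline{\otimes}V^{\otimes r}\underline{\otimes}V^{\otimes s}$ with $r=l$, $s=0$. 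This handles all index patterns $(i_1,j_1,i_2,j_2)$ at once, with no need to isolate the degenerate cases $(i_a,j_a)=(0,0)$. Your approach, by contrast, splits off those degenerate components, identifies $\overline{\bigtriangleup_\beta}_{(i_1,j_1,i_2,j_2)}$ with $\mathrm{id}_V^{\otimes i_1}\otimes\beta_{i_2,j_1}\otimes\mathrm{id}_V^{\otimes j_2}$ on the remaining ones, and then verifies the required commutation by the reduced-word identity $\chi_{ij}(w\times w')=(w'\times w)\chi_{ij}$. This is more elementary and entirely self-contained (it does not rely on the YB coalgebra lemma), at the cost of the case analysis; the paper's route is more conceptual and uniform. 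One minor point: your justification for the vanishing when some $(i_a,j_a)=(0,0)$ tacitly assumes the input lies in $\overline{C}$, i.e.\ that not \emph{all} pairs vanish; in the residual case where every $(i_a,j_a)=(0,0)$ the total degree is zero and the identity is trivial anyway, so this is harmless.
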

\begin{proof}Since $(T^c(V)^{\otimes 2},\bigtriangleup_\beta,T^\beta_{\chi_{22}})$ is a braided coalgebra, we have \[\left\{
\begin{array}{lll}
\lefteqn{(\mathrm{id}_{T^c(V)^{\otimes 2}}\otimes
\bigtriangleup_\beta )T^\beta_{\chi_{22}}}\\[3pt]
&=&(T^\beta_{\chi_{22}}\otimes \mathrm{id}_{T^c(V)^{\otimes 2}})(\mathrm{id}_{T^c(V)^{\otimes 2}}\otimes T^\beta_{\chi_{22}})( \bigtriangleup_\beta\otimes\mathrm{id}_{T^c(V)^{\otimes 2}}) ,\\[5pt]
\lefteqn{(\bigtriangleup_\beta\otimes\mathrm{id}_{T^c(V)^{\otimes
2}})T^\beta_{\chi_{22}} }\\[3pt]
&=&(\mathrm{id}_{T^c(V)^{\otimes 2}}\otimes
T^\beta_{\chi_{22}})(T^\beta_{\chi_{22}}\otimes
\mathrm{id}_{T^c(V)^{\otimes 2}})(\mathrm{id}_{T^c(V)^{\otimes
2}}\otimes \bigtriangleup_\beta).
\end{array} \right.\]
On $V^{\otimes i}\underline{\otimes}V^{\otimes
j}\underline{\otimes}V^{\otimes k}\underline{\otimes}V^{\otimes
l}$, we have
$T^\beta_{\chi_{22}}=T^\sigma_{\chi_{i+j,k+l}}=\beta_{i+j,k+l}$.

So on $V^{\otimes i_1+i_2}\underline{\otimes}V^{\otimes
j_1+j_2}\underline{\otimes}V^{\otimes
r}\underline{\otimes}V^{\otimes s}$,
\begin{eqnarray*}
\lefteqn{(\mathrm{id}_V^{\otimes r+s}\otimes
\bigtriangleup_{\beta\ (i_1,j_1,
i_2,j_2)}\beta_{i_1+j_1+i_2+j_2,r+s})}\\[3pt]
&=&(\beta_{i_1+j_1,r+s}\otimes\mathrm{id}_V^{\otimes
i_2+j_2})(\mathrm{id}_V^{\otimes
i_1+j_1}\otimes\beta_{i_2+j_2,r+s})(\bigtriangleup_{\beta\
(i_1,j_1, i_2,j_2)}\otimes \mathrm{id}_V^{\otimes r+s}),
\end{eqnarray*}
and on $V^{\otimes i}\underline{\otimes}V^{\otimes
j}\underline{\otimes}V^{\otimes k}\underline{\otimes}V^{\otimes
l}$,
\begin{eqnarray*}
\lefteqn{(\bigtriangleup_{\beta\ (i_1,j_1, i_2,j_2)}\otimes
\mathrm{id}_V^{\otimes
r+s})\beta_{r+s,i_1+j_1+i_2+j_2}}\\[3pt]
&=&(\mathrm{id}_V^{\otimes
i_1+j_1}\otimes\beta_{r+s,i_2+j_2})(\beta_{r+s,i_1+j_1}\otimes\mathrm{id}_V^{\otimes
i_2+j_2})(\mathrm{id}_V^{\otimes r+s}\otimes
\bigtriangleup_{\beta\ (i_1,j_1, i_2,j_2)}).
\end{eqnarray*}
In order to prove our lemma, we use induction on $k$ and the above
formulas for $r=l$ and $s=0$.

The cases $k=1$ and $k=2$ are trivial.
\begin{eqnarray*}
\lefteqn{\beta_{i_1+j_1+\cdots+i_{k+1}+j_{k+1},l}(\bigtriangleup_{\beta\
(i_1,j_1,\ldots, i_k,j_k)}^{(k)}\otimes \mathrm{id}_V^{\otimes
l})}\\[3pt]
&=&(\beta_{i_1+i_2+j_1+j_2,l}\otimes \mathrm{id}_V^{\otimes
i_3+j_3+\cdots+j_{k+1}})(\mathrm{id}_V^{\otimes
i_1+j_1+i_2+j_2}\otimes \beta_{i_3+j_3+\cdots+j_{k+1},l})\\[3pt]
&&\circ(\bigtriangleup_{\beta\ (i_1,j_1, i_2,j_2)}\otimes
\mathrm{id}_V^{\otimes
i_3+j_3+\cdots+j_{k+1}+l})(\bigtriangleup_{\beta\
(i_1+i_2,j_1+j_2,i_3,j_3,\ldots,
i_{k+1},j_{k+1})}^{(k-1)}\otimes \mathrm{id}_V^{\otimes l})\\[3pt]
&=&(\beta_{i_1+i_2+j_1+j_2,l}\otimes \mathrm{id}_V^{\otimes
i_3+j_3+\cdots+j_{k+1}})(\bigtriangleup_{\beta\ (i_1,j_1,
i_2,j_2)}\otimes \mathrm{id}_V^{\otimes
i_3+j_3+\cdots+j_{k+1}+l})\\[3pt]
&&\circ(\mathrm{id}_V^{\otimes i_1+j_1+i_2+j_2}\otimes
\beta_{i_3+j_3+\cdots+j_{k+1},l})(\bigtriangleup_{\beta\
(i_1+i_2,j_1+j_2,i_3,j_3,\ldots,
i_{k+1},j_{k+1})}^{(k-1)}\otimes \mathrm{id}_V^{\otimes l})\\[3pt]
&=&(\beta_{i_1+j_1+i_2+j_2,l}(\bigtriangleup_{\beta\ (i_1,j_1,
i_2,j_2)}\otimes \mathrm{id}_V^{\otimes l})\otimes
\mathrm{id}_V^{\otimes
i_3+j_3+\cdots+j_{k+1}})\\[3pt]
&&\circ(\mathrm{id}_V^{\otimes i_1+j_1+i_2+j_2}\otimes
\beta_{i_3+j_3+\cdots+j_{k+1},l})(\bigtriangleup_{\beta\
(i_1+i_2,j_1+j_2,i_3,j_3,\ldots,
i_{k+1},j_{k+1})}^{(k-1)}\otimes \mathrm{id}_V^{\otimes l})\\[3pt]
&=&((\mathrm{id}_V^{\otimes l}\otimes \bigtriangleup_{\beta\
(i_1,j_1, i_2,j_2)})\beta_{i_1+j_1+i_2+j_2,l}\otimes
\mathrm{id}_V^{\otimes
i_3+j_3+\cdots+j_{k+1}})\\[3pt]
&&\circ(\mathrm{id}_V^{\otimes i_1+j_1+i_2+j_2}\otimes
\beta_{i_3+j_3+\cdots+j_{k+1},l})(\bigtriangleup_{\beta\
(i_1+i_2,j_1+j_2,i_3,j_3,\ldots,
i_{k+1},j_{k+1})}^{(k-1)}\otimes \mathrm{id}_V^{\otimes l})\\[3pt]
&=&(\mathrm{id}_V^{\otimes l}\otimes \bigtriangleup_{\beta\
(i_1,j_1, i_2,j_2)}\otimes \mathrm{id}_V^{\otimes
i_3+j_3+\cdots+j_{k+1}})\\[3pt]
&&\circ \beta_{i_1+j_1+\cdots+j_{k+1},l}(\bigtriangleup_{\beta\
(i_1+i_2,j_1+j_2,i_3,j_3,\ldots,
i_{k+1},j_{k+1})}^{(k-1)}\otimes \mathrm{id}_V^{\otimes l})\\[3pt]
&=&(\mathrm{id}_V^{\otimes l}\otimes \bigtriangleup_{\beta\
(i_1,j_1, i_2,j_2)}\otimes \mathrm{id}_V^{\otimes
i_3+j_3+\cdots+j_{k+1}})\\[3pt]
&&\circ ( \mathrm{id}_V^{\otimes l}\otimes \bigtriangleup_{\beta\
(i_1+i_2,j_1+j_2,i_3,j_3,\ldots,
i_{k+1},j_{k+1})}^{(k-1)})\beta_{i_1+j_1+\cdots+j_{k+1},l}\\[3pt]
&=&(\mathrm{id}_V^{\otimes l}\otimes \bigtriangleup_{\beta\
(i_1,j_1,\ldots,
i_{k+1},j_{k+1})}^{(k)})\beta_{i_1+j_1+\cdots+i_{k+1}+j_{k+1},l}.
\end{eqnarray*}
The another equality can be proved similarly.
\end{proof}

\begin{proposition}Let $(V,M,\sigma)$ be a quantum multi-brace algebra. Then we have \[\left\{
\begin{array}{lll}
\beta(\ast\otimes \mathrm{id}_{T^c(V)})&=&(
\mathrm{id}_{T^c(V)}\otimes \ast)\beta_1\beta_2 ,\\[3pt]
\beta(\mathrm{id}_{T^c(V)}\otimes \ast)&=&(\ast\otimes
\mathrm{id}_{T^c(V)} )\beta_2\beta_1,
\end{array} \right.
\]where $\ast=\varepsilon\otimes \varepsilon+\sum_{r\geq 1}M^{\otimes r}\circ \bigtriangleup_\beta^{(r-1)}$.\end{proposition}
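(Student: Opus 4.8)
The plan is to check both identities bihomogeneous piece by piece, on each summand $V^{\otimes i}\underline{\otimes}V^{\otimes j}\underline{\otimes}V^{\otimes k}$ of $T^c(V)\underline{\otimes}T^c(V)\underline{\otimes}T^c(V)$; the two identities are proved in the same way, using respectively the first and the second halves of the two preceding lemmas. When one of $i,j,k$ vanishes the identity reduces, by the normalisation $M_{10}=M_{01}=\mathrm{id}_V$ (which forces $\ast(1\underline{\otimes}v)=v$ and $\ast(u\underline{\otimes}1)=u$) together with the convention $\beta_{0p}=\beta_{p0}=\tau$, to a one-line verification, so I assume $i,j,k\geq 1$. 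The first step is to rewrite $\ast$ on $V^{\otimes i}\underline{\otimes}V^{\otimes j}$: since $i+j\geq 1$ the term $\varepsilon\otimes\varepsilon$ vanishes, and expanding $M^{\otimes r}\circ\overline{\bigtriangleup_\beta}^{(r-1)}$ along the grading, in the notation $\overline{\bigtriangleup_\beta}^{(r-1)}_{(i_1,j_1,\dots,i_r,j_r)}$ fixed just before the previous lemma, gives
\[
\ast\big|_{V^{\otimes i}\underline{\otimes}V^{\otimes j}}=\sum_{r\geq 1}\ \sum_{\substack{i_1+\cdots+i_r=i\\ j_1+\cdots+j_r=j}}M_{(i_1,j_1,\dots,i_r,j_r)}\circ\overline{\bigtriangleup_\beta}^{(r-1)}_{(i_1,j_1,\dots,i_r,j_r)},
\]
a finite sum since each pair has $i_s+j_s\geq 1$ and $\overline{\bigtriangleup_\beta}^{(r-1)}$ vanishes on $V^{\otimes i}\underline{\otimes}V^{\otimes j}$ once $r>i+j$. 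Hence $\ast\otimes\mathrm{id}_{T^c(V)}$ carries $V^{\otimes i}\underline{\otimes}V^{\otimes j}\underline{\otimes}V^{\otimes k}$ into $\bigoplus_{r\geq1}V^{\otimes r}\underline{\otimes}V^{\otimes k}$, on which $\beta$ acts as $\beta_{rk}$.

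Next I move $\beta$ to the right, one summand at a time. On the $(i_1,j_1,\dots,i_r,j_r)$-term, the lemma relating $\beta$ to the tensor operations $M_{(\cdots)}$ gives
\[
\beta_{rk}\,(M_{(i_1,j_1,\dots,i_r,j_r)}\otimes\mathrm{id}_V^{\otimes k})=(\mathrm{id}_V^{\otimes k}\otimes M_{(i_1,j_1,\dots,i_r,j_r)})\,\beta_{i+j,k}
\]
(using $i_1+j_1+\cdots+i_r+j_r=i+j$), and then the lemma relating $\beta$ to the iterated projected reduced coproducts gives
\[
\beta_{i+j,k}\,(\overline{\bigtriangleup_\beta}^{(r-1)}_{(i_1,j_1,\dots,i_r,j_r)}\otimes\mathrm{id}_V^{\otimes k})=(\mathrm{id}_V^{\otimes k}\otimes\overline{\bigtriangleup_\beta}^{(r-1)}_{(i_1,j_1,\dots,i_r,j_r)})\,\beta_{i+j,k}.
\]
Composing these and summing over $r$ and over all decompositions of $(i,j)$ reassembles the interior factors into $\ast$, so that on $V^{\otimes i}\underline{\otimes}V^{\otimes j}\underline{\otimes}V^{\otimes k}$ one obtains $\beta(\ast\otimes\mathrm{id}_{T^c(V)})=(\mathrm{id}_V^{\otimes k}\otimes\ast)\,\beta_{i+j,k}$, where $\beta_{i+j,k}$ is the braid sending $V^{\otimes i}\underline{\otimes}V^{\otimes j}\underline{\otimes}V^{\otimes k}$ to $V^{\otimes k}\underline{\otimes}V^{\otimes i}\underline{\otimes}V^{\otimes j}$ (moving the last block to the front, keeping the first two in order) and $\ast$ then acts on the trailing two blocks.

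It remains to identify the right-hand side. On $V^{\otimes i}\underline{\otimes}V^{\otimes j}\underline{\otimes}V^{\otimes k}$ one has $\beta_1\beta_2=(\beta_{ik}\otimes\mathrm{id}_V^{\otimes j})(\mathrm{id}_V^{\otimes i}\otimes\beta_{jk})$, which by the block-decomposition relation for $\beta$ recorded in Section 3 is exactly the braid $\beta_{i+j,k}$ above; and $\mathrm{id}_{T^c(V)}\otimes\ast$ then applies $\ast$ to the trailing $V^{\otimes i}\underline{\otimes}V^{\otimes j}$. Thus $(\mathrm{id}_{T^c(V)}\otimes\ast)\beta_1\beta_2=(\mathrm{id}_V^{\otimes k}\otimes\ast)\beta_{i+j,k}$, which is precisely what was obtained on the left, proving the first identity. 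The second identity is identical in structure: now $\mathrm{id}_{T^c(V)}\otimes\ast$ is applied first to the last two factors, $\beta$ acts as $\beta_{in}$ on $V^{\otimes i}\underline{\otimes}V^{\otimes n}$, one uses the second halves of the two commutation lemmas to move $\beta$ leftwards, and on the right one uses $\beta_2\beta_1=(\mathrm{id}_V^{\otimes j}\otimes\beta_{ik})(\beta_{ij}\otimes\mathrm{id}_V^{\otimes k})$, the braid moving $V^{\otimes i}$ past $V^{\otimes j}\underline{\otimes}V^{\otimes k}$; both sides then reduce to $(\ast\otimes\mathrm{id}_V^{\otimes i})\,\beta_{i,j+k}$.

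The argument is essentially bookkeeping: all the genuine content sits in the two commutation lemmas, which already encode the Yang--Baxter conditions (condition 2 of the definition) together with the compatibility of $\beta$ with $\bigtriangleup_\beta$ coming from the YB coalgebra $T^c(V)^{\otimes2}$. The steps needing the most care are the grading combinatorics --- checking that the sum of the $M_{(\cdots)}\circ\overline{\bigtriangleup_\beta}^{(r-1)}_{(\cdots)}$ genuinely reproduces $\ast$ on each bihomogeneous piece, where one invokes $M_{00}=0$ (and the earlier identity $M^{\otimes r}\overline{\bigtriangleup_\beta}^{(r-1)}=M^{\otimes r}\bigtriangleup_\beta^{(r-1)}$) to discard the degenerate components --- and matching $\beta_1\beta_2$ and $\beta_2\beta_1$ with the single global braids $\beta_{i+j,k}$ and $\beta_{i,j+k}$ through the block relations. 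In particular the associativity condition (condition 3) is not used here at all: only conditions 1 and 2 enter.
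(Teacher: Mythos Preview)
Your proof is correct and follows essentially the same route as the paper: decompose $\ast$ on each bihomogeneous piece as a sum of $M_{(i_1,j_1,\dots,i_r,j_r)}\circ\overline{\bigtriangleup_\beta}^{(r-1)}_{(i_1,j_1,\dots,i_r,j_r)}$, commute $\beta$ past each factor using the two preceding lemmas, and identify $\beta_1\beta_2$ (resp.\ $\beta_2\beta_1$) with $\beta_{i+j,k}$ (resp.\ $\beta_{i,j+k}$) via the block relations. The paper's own proof is a two-line appeal to those same lemmas, so your write-up is simply a more explicit version of the same argument (one small slip: the block relations for $\beta$ are recorded in Section~2, not Section~3).
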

\begin{proof}We only need to verify that for all $k,l\geq 1$,
\[\left\{
\begin{array}{lll}
\lefteqn{\beta_{kl}\big((M_{(i_1,j_1,\ldots, i_k,j_k)}\circ
\bigtriangleup_{\beta\ (i_1,j_1,\ldots, i_k,j_k)}^{(k-1)})\otimes
\mathrm{id}_V^{\otimes
l}\big)}\\[3pt]
&=&\big(\mathrm{id}_V^{\otimes l}\otimes (M_{(i_1,j_1,\ldots,
i_k,j_k)}\circ \bigtriangleup_{\beta\ (i_1,j_1,\ldots,
i_k,j_k)}^{(k-1)})\big)\beta_{i_1+j_1+\cdots+i_k+j_k,l} ,\\[3pt]
\lefteqn{\beta_{lk}\big(\mathrm{id}_V^{\otimes l}\otimes
(M_{(i_1,j_1,\ldots, i_k,j_k)}\circ \bigtriangleup_{\beta\
(i_1,j_1,\ldots,
i_k,j_k)}^{(k-1)})\big)}\\[3pt]
&=&\big(( M_{(i_1,j_1,\ldots, i_k,j_k)}\circ
\bigtriangleup_{\beta\ (i_1,j_1,\ldots,
i_k,j_k)}^{(k-1)})\otimes\mathrm{id}_V^{\otimes
l}\big)\beta_{l,i_1+j_1+\cdots+i_k+j_k}.
\end{array} \right.
\]They follow from the preceding lemmas immediately.\end{proof}

\begin{theorem}Let $(V,M,\sigma)$ be a quantum multi-brace algebra. Then $(T(V), \ast,\beta)$ is a braided algebra. \end{theorem}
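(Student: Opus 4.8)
The plan is to follow the pattern of the classical $\textbf{B}_\infty$-case (Loday--Ronco \cite{LoR}), establishing separately the three requirements of Definition 2.1: that $\beta$ is a braiding on $T(V)$, that $(T(V),\ast)$ is an associative unital algebra, and that the compatibility diagram of Definition 2.1, with $m$ replaced by $\ast$, commutes. The first is already known from Section 3, where $\beta$ was shown to be a braiding on $T(V)$. The top two squares of that diagram are precisely the two identities of Proposition 4.15, and the bottom two squares, involving the unit $\eta$, reduce to the facts that $\beta$ is the flip on $V^{\otimes 0}\underline{\otimes}V^{\otimes n}$ and on $V^{\otimes n}\underline{\otimes}V^{\otimes 0}$ (by the convention $\beta_{0n}=\beta_{n0}=\tau$) and that $1=1_{T(V)}\in V^{\otimes 0}$ is a two-sided unit for $\ast$. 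So everything reduces to unitality and associativity of $\ast$.

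Unitality is quick: the assignment $x\mapsto\ast(x\underline{\otimes}1)$ is the composite of the coalgebra morphism $x\mapsto x\underline{\otimes}1$ with $\ast$, hence a coalgebra endomorphism of $T^c(V)$; it fixes the group-like element $1$, and since $P_V\circ\ast=M$ its $P_V$-component is $\bigoplus_i M_{i0}=P_V$ (because $M_{10}=\mathrm{id}_V$ and $M_{i0}=0$ otherwise). By the uniqueness statement in Corollary 4.4 it must equal $\mathrm{id}_{T^c(V)}$, i.e. $\ast(x\underline{\otimes}1)=x$; the identity $\ast(1\underline{\otimes}x)=x$ follows symmetrically using the $M_{0i}$.

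The heart of the proof is associativity. Set $\Phi=\ast\circ(\ast\otimes\mathrm{id}_{T^c(V)})$ and $\Psi=\ast\circ(\mathrm{id}_{T^c(V)}\otimes\ast)$, both maps $T^c(V)^{\otimes 3}\rightarrow T^c(V)$; I would prove $\Phi=\Psi$ by the universal property of the connected coalgebra $T^c(V)$. By Proposition 4.5 the source $(T^c(V)^{\otimes 3},\bigtriangleup_{\beta,3})$ is connected. Using Proposition 4.15 one checks that $\ast$ is compatible with $\beta$ in exactly the way required for $\ast\otimes\mathrm{id}_{T^c(V)}$ and $\mathrm{id}_{T^c(V)}\otimes\ast$ to be coalgebra morphisms from $(T^c(V)^{\otimes 3},\bigtriangleup_{\beta,3})$ to $(T^c(V)^{\otimes 2},\bigtriangleup_\beta)$ --- in the braided setting the tensor product of a coalgebra morphism with an identity is a coalgebra morphism only because of this braiding-compatibility --- so $\Phi$ and $\Psi$ are coalgebra morphisms, and both send $1^{\underline{\otimes}3}$ to $1$. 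By Corollary 4.4 it then suffices to prove $P_V\circ\Phi=P_V\circ\Psi$. For this one uses $P_V\circ\ast=M$ together with the fact that the component of $\ast$ landing in $V^{\otimes r}$ is $M^{\otimes r}\circ\overline{\bigtriangleup_\beta}^{(r-1)}$ (from the explicit formula in Proposition 4.3), which, on $V^{\otimes i}\underline{\otimes}V^{\otimes j}\subseteq\overline{C}\cap F_{i+j}(T^c(V)^{\otimes 2})$, vanishes once $r>i+j$ by Proposition 4.2.3. Evaluating on $V^{\otimes i}\underline{\otimes}V^{\otimes j}\underline{\otimes}V^{\otimes k}$ then gives
\[
P_V\circ\Phi=\sum_{r=1}^{i+j}M_{rk}\circ\big((M^{\otimes r}\circ\overline{\bigtriangleup_\beta}^{(r-1)})\otimes\mathrm{id}_V^{\otimes k}\big)
\]
and
\[
P_V\circ\Psi=\sum_{l=1}^{j+k}M_{il}\circ\big(\mathrm{id}_V^{\otimes i}\otimes(M^{\otimes l}\circ\overline{\bigtriangleup_\beta}^{(l-1)})\big),
\]
whose equality is exactly the associativity condition in Definition 4.10. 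Hence $\Phi=\Psi$ and $\ast$ is associative.

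Putting these together, $(T(V),\ast)$ is an associative unital algebra and the compatibility diagram of Definition 2.1 holds by Proposition 4.15 together with the unit/counit checks, so $(T(V),\ast,\beta)$ is a YB algebra. The step I expect to be the main obstacle is the one in the third paragraph where Proposition 4.15 enters: one must argue carefully that $\ast\otimes\mathrm{id}_{T^c(V)}$ and $\mathrm{id}_{T^c(V)}\otimes\ast$ are genuinely coalgebra morphisms for the braided tensor-product coproducts $\bigtriangleup_{\beta,2}$ and $\bigtriangleup_{\beta,3}$ (this is the only point at which the braiding $\beta$ really intervenes in the associativity argument), and then keep the bookkeeping straight when collapsing $P_V\circ\Phi$ and $P_V\circ\Psi$ to the two sides of the associativity axiom, where the filtration bound of Proposition 4.2.3 is what truncates the two sums to their stated ranges.
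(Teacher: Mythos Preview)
Your proposal is correct and follows essentially the same strategy as the paper: reduce associativity to a comparison of two coalgebra maps from the connected coalgebra $(T^c(V)^{\otimes 3},\bigtriangleup_{\beta,3})$ to $T^c(V)$, and invoke Corollary 4.4 after matching their $P_V$-components via the associativity axiom. The only organizational difference is that the paper verifies directly, by an inline computation using Proposition 4.15, that $\delta\circ\ast(\ast\otimes\mathrm{id}_{T^c(V)})=(\ast(\ast\otimes\mathrm{id}_{T^c(V)}))^{\otimes 2}\circ\bigtriangleup_{\beta,3}$, whereas you factor this through the intermediate claim that $\ast\otimes\mathrm{id}_{T^c(V)}$ is itself a coalgebra morphism $(T^c(V)^{\otimes 3},\bigtriangleup_{\beta,3})\to(T^c(V)^{\otimes 2},\bigtriangleup_\beta)$; both unpack to the same use of Proposition 4.15.
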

\begin{proof}We only need to show that $\ast$ is associative. First we show that $\ast(\ast\otimes \mathrm{id}_{T^c(V)})$ and
$\ast(\mathrm{id}_{T^c(V)}\otimes \ast)$ are coalgebra maps from
$(T^c(V)^{\otimes 3},\bigtriangleup_{\beta,3})$ to $T^c(V)$.

We have
\begin{eqnarray*}
\lefteqn{\delta\circ\ast(\ast\otimes \mathrm{id}_{T^c(V)})}\\[3pt]
&=&(\ast\otimes \ast)\circ \bigtriangleup_\beta\circ(\ast\otimes \mathrm{id}_{T^c(V)})\\[3pt]
&=&(\ast\otimes \ast)\circ \beta_2\circ \delta^{\otimes 2}\circ(\ast\otimes \mathrm{id}_{T^c(V)})\\[3pt]
&=&(\ast\otimes \ast)\circ \beta_2\circ (\delta\ast\otimes \delta)\\[3pt]
&=&(\ast\otimes \ast)\circ \beta_2\circ ((\ast\otimes \ast)\circ \bigtriangleup_\beta\otimes \delta)\\[3pt]
&=&(\ast\otimes \ast)\circ \beta_2\circ (\ast\otimes \ast\otimes \mathrm{id}_{T^c(V)}\otimes \mathrm{id}_{T^c(V)})\circ \beta_2\circ\beta^{\otimes 3}\\[3pt]
&=&(\ast\otimes \ast)\circ (\ast\otimes \beta(\ast\otimes
\mathrm{id}_{T^c(V)})\otimes \mathrm{id}_{T^c(V)})\circ
\beta_2\circ\delta^{\otimes 3}\\[3pt]
&=&(\ast\otimes \ast)\circ (\ast\otimes (
\mathrm{id}_{T^c(V)}\otimes \ast)\beta_1\beta_2\otimes
\mathrm{id}_{T^c(V)})\circ \beta_2\circ\delta^{\otimes 3}\\[3pt]
&=&(\ast\otimes \ast)\circ (\ast\otimes
\mathrm{id}_{T^c(V)}\otimes \ast\otimes \mathrm{id}_{T^c(V)})\circ
\beta_3\beta_4\beta_2\circ\delta^{\otimes 3}\\[3pt]
&=&(\ast\otimes \ast)\circ (\ast\otimes
\mathrm{id}_{T^c(V)}\otimes \ast\otimes \mathrm{id}_{T^c(V)})\circ
T^\beta_{w_3^{-1}}\circ\delta^{\otimes 3}\\[3pt]
&=&(\ast(\ast\otimes \mathrm{id}_{T^c(V)})\otimes \ast(\ast\otimes
\mathrm{id}_{T^c(V)}))\bigtriangleup_{\beta,3}.
\end{eqnarray*}
The first and third equalities follow from the fact that $\ast:
T^c(V)\underline{\otimes}T^c(V)\rightarrow T^c(V)$ is a coalgebra
map.

Similarly, we can prove that $\ast(\mathrm{id}_{T^c(V)}\otimes
\ast)$ is also a coalgebra map.

Now we show that $P_V\circ \ast(\ast\otimes
\mathrm{id}_{T^c(V)})=P_V\circ \ast(\mathrm{id}_{T^c(V)}\otimes
\ast)$.

On $V^{\otimes i}\underline{\otimes}V^{\otimes
j}\underline{\otimes}V^{\otimes k}$, we have
\begin{eqnarray*}
\lefteqn{P_V\circ\big( \ast(\ast\otimes
\mathrm{id}_{T^c(V)})\big)}\\
&=&P_V\Big(\sum_{s=1}^{i+j+k}M^{\otimes
s}\circ\bigtriangleup_\beta^{(s-1)}\circ\big(\sum_{r=1}^{i+j}(M^{\otimes
r}\circ \bigtriangleup_\beta^{(r-1)})\otimes
\mathrm{id}_V^{\otimes k}\big)\Big)\\
&=&\sum_{r=1}^{i+j}M_{rk}\circ \big((M^{\otimes r}\circ
\bigtriangleup_\beta^{(r-1)})\otimes
\mathrm{id}_V^{\otimes k}\big)\\
&=&\sum_{l=1}^{j+k}M_{il}\circ \big(\mathrm{id}_V^{\otimes
i}\otimes
(M^{\otimes l}\circ \bigtriangleup_\beta^{(l-1)})\big)\\
&=&P_V\Big(\sum_{s=1}^{i+j+k}M^{\otimes
s}\circ\bigtriangleup_\beta^{(s-1)}\circ\big(\sum_{l=1}^{j+k}\mathrm{id}_V^{\otimes
i}\otimes
(M^{\otimes l}\circ \bigtriangleup_\beta^{(l-1)})\big)\Big)\\
&=&P_V\circ \ast(\mathrm{id}_{T^c(V)}\otimes \ast),
\end{eqnarray*}where the third equality follows from the
associativity condition.

Finally, it is clear that both of $P_V\circ \ast(\ast\otimes
\mathrm{id}_{T^c(V)})$ and $P_V\circ
\ast(\mathrm{id}_{T^c(V)}\otimes \ast)$ vanish on
$1\underline{\otimes}1\underline{\otimes}1$. Then by the Corollary
4.3, we have that $\ast(\ast\otimes
\mathrm{id}_{T^c(V)})=\ast(\mathrm{id}_{T^c(V)}\otimes \ast)$. The
compatibility conditions for the unit and braiding are trivial.
\end{proof}

\begin{remark}By using the dual construction stated in Remark 2.3.3, we can easily define coalgebra structures on the tensor space $T(V)$ which provide braided coalgebras.\end{remark}

\begin{example}[Reconstruction of quantum shuffle algebras] Let
$(V,\sigma)$ be a braided vector space. Then $(V,M,\sigma)$ is a
multi-brace algebra with $M_{10}=\mathrm{id}_V=M_{01}$ and
$M_{pq}=0$ for other cases. The resulting algebra $T(V)$ in the
above theorem is just the quantum shuffle algebra, i.e.,
$\ast=\textrm{\cyr sh}_\sigma$.
\end{example}
\begin{example}[Quantum quasi-shuffle algebras] Let
$(V,m,\sigma)$ be a braided algebra. Then $(V,M,\sigma)$ is a
multi-brace algebra with $M_{10}=\mathrm{id}_V=M_{01}$, $M_{11}=m$
and $M_{pq}=0$ for other cases. The resulting algebra $T(V)$ in
the above theorem is called the \emph{quantum quasi-shuffle
algebra}. We denote by $\Join_\sigma$ the quantum quasi-shuffle
product. This new product has the following inductive relation:
for any $u_1,\ldots, u_i,v_1,\ldots, v_j\in V$,
\begin{eqnarray*}
\lefteqn{(u_1\otimes\cdots\otimes u_i)\Join_\sigma (v_1\otimes\cdots\otimes v_j)}\\
&=&u_1\otimes \Big((u_2\otimes\cdots\otimes u_i)\Join_\sigma (v_1\otimes\cdots\otimes v_{j})\Big)\\
&&+(\mathrm{id}_V\otimes \Join_{\sigma (i-1,j)})(\beta_{i,1}\otimes \mathrm{id}_V^{\otimes j-1})(u_1\otimes\cdots\otimes u_i\otimes v_1\otimes\cdots\otimes v_j)\nonumber\\
&&+(\mu\otimes\Join_{\sigma (i-1,j-1)} )(\mathrm{id}_V\otimes
\beta_{i-1,1}\otimes \mathrm{id}_V^{\otimes
j-1})(u_1\otimes\cdots\otimes u_i\otimes v_1\otimes\cdots\otimes
v_j),
\end{eqnarray*}
where  $\Join_{\sigma (i,j)}$ the restriction of $\Join_\sigma$ on
$V^{\otimes i}\underline{\otimes}V^{\otimes j}$. It is the
generalization of quantum shuffle algebra and the quantization of
the classical quasi-shuffle algebra. It is not hard to see that
Hoffman's q-deformation of quasi-shuffle product (see \cite{Hof2})
is a special quantum quasi-shuffle product.

\end{example}
\begin{proposition}Let $V$ be a Yetter-Drinfel'd module over a Hopf algebra $H$. If $V$ is both a module-algebra and comodule-algebra with multiplication $m_V$, then the quantum quasi-shuffle algebra built on $V$ is a module-algebra
with the diagonal action and a comodule-algebra with the diagonal
coaction.\end{proposition}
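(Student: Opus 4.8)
The statement unpacks into two independent claims, and for each the only real content is the compatibility of $\Join_\sigma$ with the relevant structure map. (Note that by Theorem 2.11 the hypotheses already make $(V,\sigma_V)$ a YB algebra, so the quantum quasi-shuffle construction of Section~4 does apply here.) For the module-algebra part one must check (i) that the diagonal action of $H$ makes $T(V)$ an $H$-module, (ii) that the product $\Join_\sigma\colon T(V)\underline{\otimes}T(V)\to T(V)$ is $H$-linear when the source carries the tensor-product (diagonal) action, and (iii) that $h\cdot 1_{T(V)}=\varepsilon(h)1_{T(V)}$. Points (i) and (iii) are immediate: a tensor product of $H$-modules is an $H$-module, and the degree-zero component $V^{\otimes 0}=K$ carries the action through $\varepsilon$. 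Dually, the comodule-algebra part reduces to the $H$-colinearity of $\Join_\sigma$ for the diagonal coactions together with $\rho(1_{T(V)})=1_H\otimes 1_{T(V)}$, which is clear. So everything comes down to (ii) and its dual.

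To prove (ii) I would use the closed form of the quantum quasi-shuffle product given by Corollary 4.13: with $M_{10}=M_{01}=\mathrm{id}_V$, $M_{11}=m_V$ and $M_{pq}=0$ otherwise, one has $\Join_\sigma=\ast=\varepsilon\otimes\varepsilon+\sum_{n\geq 1}M^{\otimes n}\circ T^\beta_{w_n}\circ(\delta^{(n-1)})^{\otimes 2}$, the sum being finite in each bidegree. Hence, on $V^{\otimes i}\underline{\otimes}V^{\otimes j}$, the map $\Join_\sigma$ is a finite sum of composites of tensor products of only four elementary maps: the deconcatenation $\delta$ of $T^{c}(V)$, the braiding $\sigma_V$ (of which $\beta$ and every $T^\beta_{w_n}$ are iterates), the identity $\mathrm{id}_V$, and the multiplication $m_V$. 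Each of these is $H$-linear for the diagonal actions: this is trivial for $\mathrm{id}_V$; for $\delta$ it follows from coassociativity of the comultiplication of $H$ (deconcatenating $(h_{(1)}\cdot v_1)\otimes\cdots\otimes(h_{(n)}\cdot v_n)$ along a consecutive cut gives the tensor-product action applied to the deconcatenation of $v_1\otimes\cdots\otimes v_n$); for $\sigma_V$ it is the standard fact, immediate from part~3 of Definition 2.8, that $\sigma_V\colon V\otimes V\to V\otimes V$ is a morphism of $H$-modules for the diagonal action (this is part of what makes the category of Yetter-Drinfel'd modules braided monoidal); and for $m_V$ it is exactly part~2 of Definition 2.9, i.e.\ the hypothesis that $V$ is a module-algebra. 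Since $H$-linear maps are closed under composition and tensor product and these operations respect the splitting of the diagonal action along the tensor grading (again coassociativity), $\Join_\sigma$ is $H$-linear, which gives (ii).

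The dual claim is proved the same way: $\delta$ and $\mathrm{id}_V$ are trivially $H$-colinear, $\sigma_V$ is $H$-colinear because it is also a morphism of $H$-comodules for the diagonal coaction $\rho(v\otimes w)=\sum v_{(-1)}w_{(-1)}\otimes v_{(0)}\otimes w_{(0)}$, and $m_V$ is $H$-colinear because $V$ is a comodule-algebra, i.e.\ $\rho$ is an algebra map (part~2 of Definition 2.10). Composing and tensoring, $\Join_\sigma$ is $H$-colinear for the diagonal coactions; with $\rho(1_{T(V)})=1_H\otimes 1_{T(V)}$ this yields the comodule-algebra structure. The main (and essentially only) point requiring care is the bookkeeping of the Sweedler legs of $H$ when one unwinds how the diagonal action and coaction distribute over $\delta$ and over the grading, all of which is resolved by coassociativity. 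If one prefers an argument independent of Corollary 4.13, the same ingredients feed an induction on $i+j$ based on the recursion (6): apply $h\cdot(-)$, resp.\ $\rho$, to each of its three terms, push it through $\underline{\otimes}$, $\sigma_V$ and $m_V$ by the facts above, invoke the inductive hypothesis for $\Join_\sigma$ in lower degree, and recombine.
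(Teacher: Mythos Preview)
Your argument is correct. The paper's proof is different in emphasis: it proceeds by induction on $i+j$ via the recursion~(6), observing that on $V\underline{\otimes}V$ one has $\Join_\sigma=m_V+\text{\cyr sh}_\sigma$ and invoking the already-known fact (from \cite{Ro}) that the quantum shuffle algebra $T_\sigma(V)$ is a module-algebra and comodule-algebra for the diagonal structures, together with $m_V$ being a module and comodule map; the inductive step then reduces to the same ingredients applied to~(6). Your primary route instead uses the closed form for $\ast$ (what the paper numbers as Corollary~4.9, not~4.13) to exhibit $\Join_\sigma$ as a finite composite of tensor products of $\mathrm{id}_V$, $\delta$, $\sigma_V$ and $m_V$, each checked individually to be $H$-(co)linear. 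This has the advantage of not importing the quantum-shuffle result as a black box and of making transparent exactly which axioms are used (the Yetter--Drinfel'd condition for the (co)linearity of $\sigma_V$, and the module-/comodule-algebra hypotheses for $m_V$). The inductive variant you sketch at the end is essentially the paper's argument.
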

\begin{proof}We use induction to prove the statement. On $V\underline{\otimes}V$, $\Join_\sigma=m_V+\textrm{\cyr sh}_\sigma$. Since $T_\sigma(V)$ is both a module-algebra and a comodule-algebra with the diagonal action and coaction respectively, and $m_V$ is both a module map and comodule map, we get the result. By using the above inductive formula of quantum quasi-shuffles to reduce the degree, the rest of the proof follows from that $m_V$ is both a module map and comodule map.\end{proof}

\begin{remark}Under the assumptions in the above proposition, we can define a map $\Join_\sigma: T(V)\otimes T(V)\rightarrow T(V)$ by using the inductive formula. It is not difficult to prove by induction that this $\Join_\sigma$ defines an associative product on $T(V)$. By noticing that the natural braiding of the Yetter-Drinfel'd module $T(V)$ is just $\beta$, $T(V)$ satisfies all conditions of Proposition 2.4. Hence we can reprove directly that $(T(V),\Join_\sigma, \beta)$ is a braided algebra in this special case. \end{remark}

For more properties about the quantum quasi-shuffle algebra, one
can see \cite{JRZ}.

Let $(V,M,\sigma)$ be a quantum multi-brace algebra and $\ast$ be
the product constructed by $M$ and $\sigma$ as before. We denote
by $Q_\sigma(V)$ the subalgebra of $(T(V),\ast)$ generated by $V$.
If we define $\ast^n: V^{\underline{\otimes} n+1}\rightarrow T(V)$
by $v_1\underline{\otimes} \cdots\underline{\otimes}
v_{n+1}\mapsto v_1\ast \cdots\ast v_{n+1}$, and
$\ast^0=\mathrm{id}_V$ for convenience, then
$Q_\sigma(V)=K\oplus\oplus_{n\geq 0}\mathrm{Im}\ast^n$. This
algebra is a generalization of the quantum symmetric algebra over
$V$.

\begin{proposition}The pair $(Q_\sigma(V), \beta)$ is a braided algebra.\end{proposition}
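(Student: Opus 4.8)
The plan is to obtain the statement from Theorem 4.16 by showing that the braiding $\beta$ of $T(V)$ restricts to a braiding on the subalgebra $Q:=Q_\sigma(V)$. Once we know that $\beta(Q\,\underline{\otimes}\,Q)\subseteq Q\,\underline{\otimes}\,Q$ and that this restriction is invertible, the commutative diagram of Definition 2.1.1 for $(Q,\beta)$ is simply the restriction of the one for $(T(V),\ast,\beta)$: the product $\ast$ maps $Q\,\underline{\otimes}\,Q$ into $Q$ by definition of the subalgebra generated by $V$, the unit $1$ lies in $Q$, and every composite map in that diagram restricts to $Q$. So the whole problem reduces to the $\beta$-invariance of $Q\,\underline{\otimes}\,Q$.

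First I would prove the mixed invariance $\beta(Q\,\underline{\otimes}\,V)\subseteq V\,\underline{\otimes}\,Q$. Since $Q=K1\oplus\bigoplus_{n\geq 0}\mathrm{Im}\,\ast^n$, it suffices to treat an element $u=u_1\ast\cdots\ast u_{n+1}$ with all $u_i\in V$, and I would induct on $n$. The case $n=0$ is $\beta(V\,\underline{\otimes}\,V)=\sigma(V\otimes V)\subseteq V\otimes V$, and $1$ is trivial because $\beta$ acts by the flip on $K\,\underline{\otimes}\,(-)$. For the inductive step write $u=(u_1\ast\cdots\ast u_n)\ast u_{n+1}$ and apply, to $(u_1\ast\cdots\ast u_n)\,\underline{\otimes}\,u_{n+1}\,\underline{\otimes}\,v$ with $v\in V$, the identity $\beta(\ast\otimes\mathrm{id}_{T^c(V)})=(\mathrm{id}_{T^c(V)}\otimes\ast)\beta_1\beta_2$ of Proposition 4.15: $\beta_2$ keeps $u_{n+1}\,\underline{\otimes}\,v$ inside $V\otimes V$ (base case), then $\beta_1$ sends $(u_1\ast\cdots\ast u_n)\,\underline{\otimes}\,(\text{element of }V)$ into $V\,\underline{\otimes}\,Q$ by the induction hypothesis, and $\mathrm{id}\otimes\ast$ finally leaves us in $V\,\underline{\otimes}\,(Q\ast V)\subseteq V\,\underline{\otimes}\,Q$.

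Then I would deduce $\beta(Q\,\underline{\otimes}\,Q)\subseteq Q\,\underline{\otimes}\,Q$ by a second induction, on the length $m$ of a word $z=z_1\ast\cdots\ast z_{m+1}$ occupying the second slot. Writing $z=(z_1\ast\cdots\ast z_m)\ast z_{m+1}$ and applying the companion identity $\beta(\mathrm{id}_{T^c(V)}\otimes\ast)=(\ast\otimes\mathrm{id}_{T^c(V)})\beta_2\beta_1$ to $q\,\underline{\otimes}\,(z_1\ast\cdots\ast z_m)\,\underline{\otimes}\,z_{m+1}$: $\beta_1$ sends $q\,\underline{\otimes}\,(z_1\ast\cdots\ast z_m)$ into $Q\,\underline{\otimes}\,Q$ by the induction hypothesis, $\beta_2$ then sends the resulting $Q\,\underline{\otimes}\,z_{m+1}$ into $V\,\underline{\otimes}\,Q$ by the mixed invariance, and $\ast\otimes\mathrm{id}$ lands in $(Q\ast V)\,\underline{\otimes}\,Q\subseteq Q\,\underline{\otimes}\,Q$ (the base case $m=0$ being the mixed invariance). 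Running the very same two inductions with $\beta^{-1}$ in place of $\beta$ — which again makes $T(V)$ a YB algebra by Remark 2.3.2, restricts to $\sigma^{-1}$ on $V\otimes V$, and satisfies $\beta^{-1}(\mathrm{id}\otimes\ast)=(\ast\otimes\mathrm{id})\beta_2^{-1}\beta_1^{-1}$ and $\beta^{-1}(\ast\otimes\mathrm{id})=(\mathrm{id}\otimes\ast)\beta_1^{-1}\beta_2^{-1}$, obtained formally by inverting the identities of Proposition 4.15 — gives $\beta^{-1}(Q\,\underline{\otimes}\,Q)\subseteq Q\,\underline{\otimes}\,Q$ as well. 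Hence $\beta|_{Q\,\underline{\otimes}\,Q}$ is invertible, and being a restriction of $\beta$ it still solves the quantum Yang--Baxter equation on $Q^{\otimes 3}$, so it is a braiding on $Q$; together with the first paragraph this completes the proof.

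The step I expect to demand the most care is the invertibility rather than the invariance: a subspace closed under $\beta$ need not be closed under $\beta^{-1}$, so one genuinely has to rerun the argument for $\beta^{-1}$, which is why I would record the inverted compatibility identities explicitly before doing so.
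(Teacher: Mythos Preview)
Your argument is correct and follows essentially the same route as the paper: both reduce the statement to showing that $\beta$ restricts to $Q_\sigma(V)$, and both deduce this from the compatibility identities of Proposition~4.15 by induction on the length of $\ast$-words. The paper packages the induction into the single intertwining formula $\beta(\ast^k\otimes\ast^l)=(\ast^l\otimes\ast^k)\beta_{k+1,l+1}$ (induction on $k+l$), which is a slightly sharper statement than your two-step containment argument but uses exactly the same ingredients. Your explicit treatment of $\beta^{-1}$ is a welcome addition: the paper leaves the invertibility of the restriction implicit, whereas you spell out that the inverted identities $\beta^{-1}(\mathrm{id}\otimes\ast)=(\ast\otimes\mathrm{id})\beta_2^{-1}\beta_1^{-1}$ etc.\ allow the same induction to be rerun for $\beta^{-1}$.
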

\begin{proof}In order to prove the statement, we only need to verify that $\beta$ is a braiding on $Q_\sigma(V)$. In fact, we have that $\beta(\ast^k\otimes \ast^l)=(\ast^l\otimes \ast^k)\beta_{k+1,l+1}$. We use induction on $k+l$.

The case $k=l=0$ is trivial since $\sigma(\mathrm{id}_V\otimes
\mathrm{id}_V)=(\mathrm{id}_V\otimes \mathrm{id}_V)\sigma$.

When $k+l\geq 1$,
\begin{eqnarray*}\beta(\ast^k\otimes \ast^l)&=&\beta(\ast\otimes\mathrm{id}_{T(V)})(\mathrm{id}_V\otimes \ast^{k-1}\otimes
\ast^l)\\[3pt]
&=&(\mathrm{id}_{T(V)}\otimes\ast)\beta_1\beta_2(\mathrm{id}_V\otimes \ast^{k-1}\otimes \ast^l)\\[3pt]
&=&(\mathrm{id}_{T(V)}\otimes\ast)\beta_1\big(\mathrm{id}_V\otimes \beta(\ast^{k-1}\otimes \ast^l)\big)\\[3pt]
&=&(\mathrm{id}_{T(V)}\otimes\ast)\beta_1(\mathrm{id}_V\otimes \ast^l\otimes \ast^{k-1})(\mathrm{id}_V\otimes \beta_{k,l+1})\\[3pt]
&=&(\mathrm{id}_{T(V)}\otimes\ast)(\beta(\mathrm{id}_V\otimes \ast^l)\otimes \ast^{k-1})(\mathrm{id}_V\otimes \beta_{k,l+1})\\[3pt]
&=&(\ast^l\otimes \ast^k)(\beta_{1,l+1}\otimes\mathrm{id}_V^{\otimes k})(\mathrm{id}_V\otimes \beta_{k,l+1})\\[3pt]
&=&(\ast^l\otimes \ast^k)\beta_{k+1,l+1}.
\end{eqnarray*}
\end{proof}

For any quantum multi-brace algebra $(V,M,\sigma)$, if we endow
$T(V)$ with the usual grading, then the algebra $(T(V),\ast)$ is
not graded in general. But with this grading, we have:

\begin{proposition}The term of highest degree in the product $\ast$ is the quantum shuffle product. \end{proposition}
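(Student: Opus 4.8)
The plan is to read off the degree-$(i+j)$ component of $(x_1\otimes\cdots\otimes x_i)\ast(y_1\otimes\cdots\otimes y_j)$ from the closed formula of Corollary 4.9, namely $\ast=\varepsilon\otimes\varepsilon+\sum_{n\geq 1}M^{\otimes n}\circ T^\beta_{w_n}\circ(\delta^{(n-1)})^{\otimes 2}$. Since each $M^{\otimes n}=M_{p_1q_1}\otimes\cdots\otimes M_{p_nq_n}$ lands in $V^{\otimes n}$, the $n$-th summand contributes only to degree $n$; so ``the term of highest degree'' amounts to isolating the contribution of one value of $n$, and I must first show that value is $n=i+j$ and identify what it produces. (For $i=0$ or $j=0$ the statement is trivial since $1\ast v=v=1\,\text{{\cyr sh}}_\sigma v$, so I focus on $i,j\geq 1$.)

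First I would bound the degrees that actually occur. On $V^{\otimes i}\underline{\otimes}V^{\otimes j}$ the map $(\delta^{(n-1)})^{\otimes 2}$ is a sum over pairs of ordered decompositions of $x_1\cdots x_i$ and of $y_1\cdots y_j$ into $n$ (possibly empty) blocks; after applying $T^\beta_{w_n}$ the $r$-th blocks are paired into a slot carrying $p_r$ letters from the $x$'s and $q_r$ from the $y$'s, with $\sum_r p_r=i$ and $\sum_r q_r=j$. If $n>i+j$ some slot is empty, so $M_{00}=0$ kills that term; hence only $n\leq i+j$ survive, and only $n=i+j$ can reach degree $i+j$. For $n=i+j$ the constraint $\sum_r(p_r+q_r)=i+j$ over $i+j$ slots forces either all $p_r+q_r=1$ — in which case $M^{\otimes n}$ acts as $\mathrm{id}_V$ on every slot (being $M_{10}$ or $M_{01}$) — or some slot has $p_r+q_r\geq 2$, whence another is empty and the term dies. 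Thus the degree-$(i+j)$ part of $\ast$ on $V^{\otimes i}\underline{\otimes}V^{\otimes j}$ equals the sum, over the decompositions with exactly one letter per slot, of the corresponding terms of $T^\beta_{w_{i+j}}\circ(\delta^{(i+j-1)})^{\otimes 2}$.

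Next I would match these ``one letter per slot'' decompositions with $(i,j)$-shuffles. Such a decomposition is recorded by the set $A=\{r_1<\cdots<r_i\}$ of slots receiving $x_1,\dots,x_i$ (in order), the complementary slots $A^c=\{s_1<\cdots<s_j\}$ receiving $y_1,\dots,y_j$ (in order); this is precisely the data of the shuffle $w\in\mathfrak{S}_{i,j}$ with $w(a)=r_a$ and $w(i+b)=s_b$. On this term $(\delta^{(i+j-1)})^{\otimes 2}$ inserts the letters into the prescribed blocks, all other blocks being empty, and $T^\beta_{w_{i+j}}$ — whose underlying operator is built from $\sigma$ and acts trivially on empty blocks — transports $x_a$ and $y_b$ to their slots, realising exactly the braiding $T^\sigma_w$. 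Summing over $A$, i.e. over $w\in\mathfrak{S}_{i,j}$, gives $\sum_{w\in\mathfrak{S}_{i,j}}T^\sigma_w=\text{{\cyr sh}}_\sigma$, by the defining formula $(5)$ of the quantum shuffle product.

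The one point that requires care — and the main obstacle — is the last identification: that $T^\beta_{w_{i+j}}$, restricted to the subspace in which only the blocks indexed by $A$ and by $A^c$ are nonempty, equals $T^\sigma_w$. This is a statement about reduced expressions in the braid group: composing the interleaving permutation $w_{i+j}$ with the ``block-spreading'' inclusion attached to the pattern $A$ reduces, on the nonempty strands, to the shuffle $w$, and the relevant expression is reduced, so the lifts to $B_{i+j}$ agree. This is the same type of combinatorial verification already carried out for $\chi_{ij}$, the $w_n$, and shuffles in the proofs of Theorem 3.1 and Proposition 4.8, and I would present it in that language rather than redo it from scratch.
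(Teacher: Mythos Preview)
Your argument is correct and takes a genuinely different route from the paper's own proof. The paper argues by induction on $i+j$: it uses the identity $M^{\otimes i+j}\overline{\bigtriangleup_\beta}^{(i+j-1)}=\big((M^{\otimes i+j-1}\overline{\bigtriangleup_\beta}^{(i+j-2)})\otimes M\big)\overline{\bigtriangleup_\beta}$, observes that on $V^{\otimes i}\underline\otimes V^{\otimes j}$ the only surviving pieces of the outer $\overline{\bigtriangleup_\beta}$ are those whose last slot is $(0,1)$ (contributing $v_j$) or $(1,0)$ (contributing $u_i$ braided past $v_1,\ldots,v_j$), applies the inductive hypothesis to each, and recognises the standard recursion $\mathfrak{S}_{i,j}=(\mathfrak{S}_{i,j-1}\times 1)\sqcup(\mathfrak{S}_{i-1,j}\times 1)\cdot s_{i+j-1}\cdots s_i$ for the shuffle product. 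You instead work directly from the closed form of Corollary~4.9, read off that only $n=i+j$ contributes to top degree, and set up a bijection between the surviving ``one letter per slot'' terms of $T^\beta_{w_{i+j}}\circ(\delta^{(i+j-1)})^{\otimes 2}$ and $(i,j)$-shuffles. Your approach makes the global combinatorics visible all at once, at the price of the strand-deletion argument you flagged: that in a reduced word for $w_{i+j}$ each pair of strands crosses at most once, so the induced positive braid word on the $i+j$ nonempty strands has length exactly $\ell(w)$ and hence equals $T^\sigma_w$. The paper's induction avoids that braid-group step entirely but hides the bijection inside the recursion; your version trades one for the other.
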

\begin{proof}We need to verify that for any $i,j\geq 1$, $$M^{\otimes i+j}\circ\bigtriangleup_\beta^{(i+j-1)}(u_1\otimes\cdots \otimes u_i\underline{\otimes}v_1\otimes\cdots \otimes v_j)=\sum_{w\in \mathfrak{S}_{ij}}T^\sigma_w(u_1\otimes\cdots \otimes u_i\underline{\otimes}v_1\otimes\cdots \otimes v_j).$$ We use induction on $i+j$.
When $i=j=1$, $M^{\otimes 2}\circ\bigtriangleup_\beta(u\otimes
v)=u\otimes v+\sigma(u\otimes v)=u\textrm{\cyr sh}_\sigma v$. By
inductive hypothesis, we have
\begin{eqnarray*}
\lefteqn{M^{\otimes i+j}\circ\bigtriangleup_\beta^{(i+j-1)}(u_1\otimes\cdots \otimes u_i\underline{\otimes}v_1\otimes\cdots \otimes v_j)}\\[3pt]
&=&\Big((M^{\otimes i+j-1}\circ\bigtriangleup_\beta^{(i+j-2)})\otimes M\Big)\bigtriangleup_\beta(u_1\otimes\cdots \otimes u_i\underline{\otimes}v_1\otimes\cdots \otimes v_j)\\[3pt]
&=&\Big((M^{\otimes i+j-1}\circ\bigtriangleup_\beta^{(i+j-2)})\otimes M\Big)(u_1\otimes\cdots \otimes u_i\underline{\otimes}v_1\otimes\cdots \otimes v_{j-1}\underline{\otimes}1\underline{\otimes}v_j\\[3pt]
&&+u_1\otimes\cdots \otimes u_{i-1}\underline{\otimes}\beta_{1j}(u_i\underline{\otimes}v_1\otimes\cdots \otimes v_j)\underline{\otimes}1)\\[3pt]
&=&\big(\sum_{w\in \mathfrak{S}_{i,j-1}}T^\sigma_w\otimes \mathrm{id}_V+\sum_{w^\prime\in \mathfrak{S}_{i-1,j}}(T^\sigma_w\otimes \mathrm{id}_V)\sigma_{i+j-1}\cdots\sigma_i\big)(u_1\otimes \cdots\otimes v_j)\\[3pt]
&=&(u_1\otimes\cdots \otimes u_i)\textrm{\cyr
sh}_\sigma(v_1\otimes\cdots \otimes v_j),
\end{eqnarray*}where the second equality follows from the fact that $M^{\otimes k}\bigtriangleup_\beta^{(k-1)}(x)=0$ for any $x$ with degree smaller than $k$, and the fourth equality follows from the fact that for any $w\in \mathfrak{S}_{i,j}$ we have either $w(i+j)=i+j$ or $w(i)=i+j$.\end{proof}

From the classical theory (see, e.g., \cite{LoR}), we also know
that $(T^c(V),\ast)$ has an antipode $S$ given by $S(1)=1$ and
$S(x)=\sum_{n\geq 0}(-1)^{n+1}\ast^{\otimes n}\circ
\overline{\delta}^{(n)}(x)$ for any $x\in
\mathrm{Ker}\varepsilon$.

\section{Constructions of quantum multi-brace algebras}
Since the conditions in the definition of quantum multi-brace
algebras are a little bit complicated, it seems that it is not
easy to obtain the map $M$. We now introduce a new notion
motivated by \cite{LoR} and use it to provide quantum multi-brace
algebras.
\begin{definition}A \emph{unital 2-braided algebra} is a braided vector space $(V,\sigma)$ equipped with two associative algebra structures $\ast$ and $\cdot$, which share the same unit, such that both $(V,\ast,\sigma)$ and $(V,\cdot,\sigma)$ are braided algebras. We denote a 2-braided algebra by $(V, \ast,\cdot,\sigma)$.\end{definition}

\begin{example}1. Let $(A, m,\alpha)$ be a braided algebra. Then $(A,
m,m, \alpha)$ is a trivial unital 2-braided algebra.

2. Let $(V,\sigma)$ be a braided vector space. Then $(T(V), m,
\textrm{\cyr sh}_\sigma, \beta)$ is a unital 2-braided algebra,
where $m$ is the concatenation product.

\end{example}

Let $(V, \ast,\cdot,\sigma)$ be a unital 2-braided algebra. We
denote by $\cdot^k$ the map from $V^{\otimes k+1}$ to $V$ given by
$v_1\otimes \cdots \otimes v_{k+1}\mapsto v_1 \cdot \cdots \cdot
v_{k+1}$. We define $M_{pq}:V^{\otimes p}\otimes V^{\otimes
q}\rightarrow V$ for $ p, q\geq 0$ inductively as follows:
\[\left\{
\begin{array}{lllll}
M_{00}&=&0,&& \\
M_{10}&=&\mathrm{id}_V&=&M_{01},\\
M_{n0}&=&0&=&M_{0n},\ \mathrm{for}\ n\geq 2,
\end{array} \right.
\]
and

\begin{eqnarray*}\lefteqn{M_{pq}(u_1\otimes \cdots \otimes u_p\underline{\otimes }v_1 \otimes \cdots \otimes v_q)}\\[3pt]
&=&(u_1\cdot \cdots \cdot u_p)\ast(v_1\cdot \cdots \cdot
v_q)\\[3pt]
&&-\sum_{k=2}^{p+q}\sum_{I_k, J_k}\cdot^{k-1}M_{(i_i,j_1,\ldots,
i_k,j_k)}\circ \bigtriangleup_{\beta\ (i_1,j_1,\ldots,
i_{k},j_{k})}^{(k-1)}(u_1\otimes \cdots \otimes
u_p\underline{\otimes }v_1 \otimes \cdots \otimes
v_q),\end{eqnarray*} where $I_k=(i_1,\ldots, i_k)$ and
$J_k=(j_1,\ldots, j_k)$ run through all the partitions of length
$k$ of $p$ and $q$ respectively.

For instance, \begin{eqnarray*}
M_{11}(u\underline{\otimes} v)&=&u\ast v\\[3pt]
&&-\cdot(M_{01}\otimes M_{10})(1\otimes \sigma(u\otimes v)\otimes 1)\\[3pt]
&&-\cdot(M_{10}\otimes M_{01})(u\otimes \sigma(1\otimes 1)\otimes v)\\[3pt]
&=&u\ast v-\cdot\sigma(u\otimes v)-u\cdot v,
\end{eqnarray*}
\begin{eqnarray*}
M_{21}(u \otimes v\underline{\otimes}w)&=&(u\cdot v)\ast w\\[3pt]
&&-u\cdot M_{11}(v\underline{\otimes} w)-\cdot(M_{11}\otimes \mathrm{id}_V)(u\otimes \sigma(v\otimes w))\\[3pt]
&&-\cdot^{2}\big(u\otimes v\otimes w+\sigma_2(u\otimes
v\otimes w)+\sigma_1\sigma_2(u\otimes v\otimes w)\big)\\[3pt]
&=&(u\cdot v)\ast w-u\cdot(v\ast w)\\[3pt]
&&+\cdot^{2}\sigma_2(u\otimes v\otimes
w)-\cdot(\ast\otimes\mathrm{id}_V)\sigma_2(u\otimes v\otimes w),
\end{eqnarray*}
and
\begin{eqnarray*}
M_{12}(u \underline{\otimes}v\otimes w)&=&u\ast( v\cdot w)-(u\ast v)\cdot w\\[3pt]
&&+\cdot^{2}\sigma_1(u\otimes v\otimes
w)-\cdot(\mathrm{id}_V\otimes\ast)\sigma_1(u\otimes v\otimes w).
\end{eqnarray*}

\begin{theorem}Let $(V, \ast,\cdot,\sigma)$ be a unital 2-braided algebra and $M=(M_{pq})$ be the maps defined above. Then $(V, M, \sigma)$ is a quantum multi-brace algebra.\end{theorem}
\begin{proof}First we verify the Yang-Baxter condition. We use induction on $i+j+k$.

When $i=j=k=1$,
\begin{eqnarray*}
\beta_{11}(M_{11}\otimes \mathrm{id}_V)&=&\sigma(\ast\otimes\mathrm{id}_V-(\cdot\otimes\mathrm{id}_V)\sigma_1-\cdot\otimes\mathrm{id}_V
)\\[3pt]
&=&(\mathrm{id}_V\otimes\ast)\sigma_1\sigma_2-(\mathrm{id}_V\otimes\cdot)\sigma_1\sigma_2\sigma_1-(\mathrm{id}_V\otimes\cdot)\sigma_1\sigma_2\\[3pt]
&=&(\mathrm{id}_V\otimes\ast)\sigma_1\sigma_2-(\mathrm{id}_V\otimes\cdot)\sigma_2\sigma_1\sigma_2-(\mathrm{id}_V\otimes\cdot)\sigma_1\sigma_2\\[3pt]
&=&\big(\mathrm{id}_V\otimes(\ast-\cdot\sigma-\cdot)\big)\sigma_1\sigma_2\\[3pt]
&=&( \mathrm{id}_V\otimes M_{11})\beta_{21}.
\end{eqnarray*}For general case, we have
\begin{eqnarray*}
\lefteqn{\beta_{1k}(M_{pq}\otimes \mathrm{id}_V^{\otimes
k})}\\[3pt]
&=&\beta_{1k}(\ast\otimes \mathrm{id}_V^{\otimes k})(\cdot^{
p-1}\otimes \cdot^{ q-1}\otimes \mathrm{id}_V^{\otimes
k})\\[3pt]
&&-\sum \beta_{1k} (\cdot^{ r-1}\otimes \mathrm{id}^{\otimes
l})\Big((M_{(i_i,j_1,\ldots, i_r,j_r)}\circ\bigtriangleup_{\beta\
(i_1,j_1,\ldots,
i_{r},j_{r})})\otimes \mathrm{id}_V^{\otimes l}\Big)\\[3pt]
&=&( \mathrm{id}_V^{\otimes k}\otimes\ast)(\beta_{1k}\otimes
\mathrm{id}_V )(\mathrm{id}_V\otimes \beta_{1k})(\cdot^{
p-1}\otimes \cdot^{ q-1}\otimes \mathrm{id}_V^{\otimes
k})\\[3pt]
&&-\sum (\mathrm{id}^{\otimes l}\otimes \cdot^{
r-1})\beta_{rk}\Big((M_{(i_i,j_1,\ldots,
i_r,j_r)}\circ\bigtriangleup_{\beta\ (i_1,j_1,\ldots,
i_{r},j_{r})})\otimes \mathrm{id}^{\otimes l}\Big)\\[3pt]
&=&( \mathrm{id}_V^{\otimes k}\otimes\ast)( \mathrm{id}_V^{\otimes
k}\otimes\cdot^{
p-1}\otimes \cdot^{ q-1})\beta_{p+q,k}\\[3pt]
&&-\sum (\mathrm{id}^{\otimes l}\otimes \cdot^{
r-1})\Big((M_{(i_i,j_1,\ldots, i_r,j_r)}\circ
\bigtriangleup_{\beta\ (i_1,j_1,\ldots,
i_{r},j_{r})})\otimes \mathrm{id}_V^{\otimes l}\Big)\beta_{p+q,k}\\[3pt]
&=&( \mathrm{id}_V^{\otimes k}\otimes M_{pq})\beta_{p+q,k}.
\end{eqnarray*}

The condition $\beta_{i1}(\mathrm{id}_V^{\otimes i}\otimes
M_{jk})=(M_{jk}\otimes \mathrm{id}_V^{\otimes i} )\beta_{i,j+k}$
can be verified similarly.

Now we want to prove that $M=(M_{pq})$ also satisfies the
associativity condition. We use induction on $i+j+k$.

When $i=j=k=1$, the associativity condition is just
$M_{11}(M_{11}\otimes
\mathrm{id}_V)+M_{21}+M_{21}\sigma_1=M_{11}(\mathrm{id}_V \otimes
M_{11} )+M_{12}+M_{12}\sigma_2$. Now we verify it:
\begin{eqnarray*}
\lefteqn{M_{11}(M_{11}\otimes \mathrm{id}_V)+M_{21}+M_{21}\sigma_1}\\[3pt]
&=&\ast^{ 2}-\cdot\sigma(\ast\otimes \mathrm{id}_V)-\cdot(\ast\otimes \mathrm{id}_V)\\[3pt]
&&-\ast(\cdot\otimes \mathrm{id}_V)\sigma_1+\cdot\sigma(\cdot\otimes \mathrm{id}_V)\sigma_1+\cdot^{ 2}\sigma_1\\[3pt]
&&-\ast(\cdot\otimes \mathrm{id}_V)+\cdot\sigma(\cdot\otimes \mathrm{id}_V)+\cdot^{ 2}\\[3pt]
&&+\ast(\cdot\otimes \mathrm{id}_V)-\cdot (\mathrm{id}_V\otimes \ast)+\cdot^{ 2}\sigma_2-\cdot(\ast\otimes \mathrm{id}_V)\sigma_2\\[3pt]
&&+\ast(\cdot\otimes \mathrm{id}_V)\sigma_1-\cdot
(\mathrm{id}_V\otimes \ast)\sigma_1+\cdot^{
2}\sigma_2\sigma_1-\cdot(\ast\otimes
\mathrm{id}_V)\sigma_2\sigma_1\\[3pt]
&=&\ast^{ 2}-\cdot(\mathrm{id}_V\otimes \ast)\sigma_1\sigma_2-\cdot(\ast\otimes \mathrm{id}_V)\\[3pt]
&&+\cdot^{2}\sigma_1\sigma_2\sigma_1+\cdot^{ 2}\sigma_1+\cdot^{ 2}\sigma_1\sigma_2+\cdot^{ 2}\\[3pt]
&&-\cdot (\mathrm{id}_V\otimes \ast)+\cdot^{ 2}\sigma_2-\cdot(\ast\otimes \mathrm{id}_V)\sigma_2\\[3pt]
&&-\cdot (\mathrm{id}_V\otimes \ast)\sigma_1+\cdot^{
2}\sigma_2\sigma_1-\cdot\sigma(
\mathrm{id}_V\otimes\ast)\\[3pt]
&=&\ast^{ 2}-\cdot\sigma(\mathrm{id}_V\otimes \ast)-\cdot (\mathrm{id}_V\otimes \ast)\\[3pt]
&&-\ast(\mathrm{id}_V\otimes \cdot)\sigma_2+\cdot\sigma(\mathrm{id}_V\otimes \cdot)\sigma_2+\cdot ^{ 2}\sigma_2\\[3pt]
&&-\ast(\mathrm{id}_V\otimes \cdot)+\cdot\sigma(\mathrm{id}_V\otimes \cdot)+\cdot (\mathrm{id}_V\otimes \cdot)\\[3pt]
&&+\ast(\mathrm{id}_V\otimes\cdot)-\cdot(\ast\otimes\mathrm{id}_V)+\cdot^{
2}\sigma_1-\cdot(\mathrm{id}_V\otimes\ast)\sigma_1\\[3pt]
&&+\ast(\mathrm{id}_V\otimes\cdot)\sigma_2-\cdot(\ast\otimes\mathrm{id}_V)\sigma_2+\cdot^{
2}\sigma_1\sigma_2-\cdot(\mathrm{id}_V\otimes\ast)\sigma_1\sigma_2\\[3pt]
&=&M_{11}(\mathrm{id}_V \otimes M_{11} )+M_{12}+M_{12}\sigma_2.
\end{eqnarray*}
For $i+j+k\geq 2$, we have
\begin{eqnarray*}
\lefteqn{\sum_{r=1}^{i+j}M_{rk}\circ \big((M^{\otimes r}\circ
\bigtriangleup_\beta^{(r-1)})\otimes
\mathrm{id}_V^{\otimes k}\big)}\\[3pt]
&=&\sum_{r\geq 1}\big(\ast(\cdot^{r-1}\otimes
\cdot^{r-1})-\sum_{l\geq 2}\cdot^{l-1}M^{\otimes l}\circ
\bigtriangleup_\beta^{(l-1)})\big)\circ \big((M^{\otimes r}\circ
\bigtriangleup_\beta^{(r-1)})\otimes
\mathrm{id}_V^{\otimes k}\big)\\[3pt]
&=&\ast\big((\sum_{r\geq 1}\cdot^{r-1}M^{\otimes r}\circ
\bigtriangleup_\beta^{(r-1)})\otimes \cdot^{k-1}\big)\\[3pt]
&&-\sum_{r\geq 1}\sum_{l\geq 2}\cdot^{l-1}M^{\otimes l}\circ
\bigtriangleup_\beta^{(l-1)}\circ \big((M^{\otimes r}\circ
\bigtriangleup_\beta^{(r-1)})\otimes
\mathrm{id}_V^{\otimes k}\big)\\[3pt]
&=&\ast\big(\ast(\cdot^{i-1}\otimes\cdot^{j-1})\otimes \cdot^{k-1}\big)\\[3pt]
&&-\sum_{r\geq 1}\sum_{l\geq 2}\cdot\big((\cdot^{l-2}M^{\otimes
l-1}\circ \bigtriangleup_\beta^{(l-2)})\otimes M\big)
\bigtriangleup_\beta\circ \big((M^{\otimes r}\circ
\bigtriangleup_\beta^{(r-1)})\otimes
\mathrm{id}_V^{\otimes k}\big)\\[3pt]
&=&\ast(\ast\otimes \mathrm{id}_V)(\cdot^{i-1}\otimes\cdot^{j-1}\otimes \cdot^{k-1})\\[3pt]
&&-\sum_{r\geq
1}\sum_{\underline{2}}\cdot\big(\ast(\cdot^{p_1-1}\otimes\cdot^{q_1-1})\otimes
M_{p_2,q_2}\big) \\[3pt]
&&\quad\quad\quad\quad\quad \circ \bigtriangleup_{\beta\
(p_1,q_1,p_2,q_2)}\circ\big((M^{\otimes r}\circ
\bigtriangleup_\beta^{(r-1)})\otimes
\mathrm{id}_V^{\otimes k}\big)\\[3pt]
&=&\ast(\ast\otimes \mathrm{id}_V)(\cdot^{i-1}\otimes\cdot^{j-1}\otimes \cdot^{k-1})\\[3pt]
&&-\sum_{r\geq 1}\sum_{\underline{2}}\cdot\big(\ast(\cdot^{p_1-1}\otimes\cdot^{q_1-1})\otimes M_{p_2,q_2}\big)\circ(\mathrm{id}_V^{\otimes p_1}\otimes \beta_{p_2,q_1}\otimes\mathrm{id}_V^{\otimes q_2})\\[3pt]
&&\quad\quad\quad\circ(\sum M_{(r_1,s_1,\ldots,
r_{p_1},s_{p_1})}\bigtriangleup_{\beta\ (r_1,s_1,\ldots,
r_{p_1},s_{q_1})}^{(p_1-1)}\\[3pt]
&&\quad\quad\quad\quad\quad\otimes M_{(r_{p_1+1},s_{p_1+1},\ldots,
r_{p_1+p_2},s_{p_1+p_2})}\bigtriangleup_{\beta\
(r_{p_1+1},s_{p_1+1},\ldots,
r_{p_1+p_2},s_{p_1+p_2})}^{(p_2-1)}\\[3pt]
&&\quad\quad\quad\quad\quad\otimes\mathrm{id}_V^{\otimes q_1}\otimes\mathrm{id}_V^{\otimes q_2})\\[3pt]
&&\quad\quad\quad\circ(\bigtriangleup_{\beta\
(r_1+\cdots+r_{p_1},s_1+\cdots+s_{p_1},
r_{p_1+1}+\cdots+r_{p_1+p_2},s_{p_1+1}+\cdots+s_{p_1+p_2})}\otimes \mathrm{id}_V^{\otimes k})\\[3pt]
&=&\ast(\ast\otimes \mathrm{id}_V)(\cdot^{i-1}\otimes\cdot^{j-1}\otimes \cdot^{k-1})\\
&&-\sum_{r\geq 1}\sum_{\underline{2}}\cdot\big(\ast(\cdot^{p_1-1}\otimes\cdot^{q_1-1})\otimes M_{p_2,q_2}\big) \\[3pt]
&&\quad\quad\circ(\sum M_{(r_1,s_1,\ldots,
r_{p_1},s_{p_1})}\bigtriangleup_{\beta \ (r_1,s_1,\ldots,
r_{p_1},s_{q_1})}^{(p_1-1)}\otimes\mathrm{id}_V^{\otimes q_1}\\[3pt]
&&\quad\quad\quad\otimes M_{(r_{p_1+1},s_{p_1+1},\ldots,
r_{p_1+p_2},s_{p_1+p_2})}\bigtriangleup_{\beta\
(r_{p_1+1},s_{p_1+1},\ldots,
r_{p_1+p_2},s_{p_1+p_2})}^{(p_2-1)}\otimes\mathrm{id}_V^{\otimes q_2})\\[3pt]
&&\quad\quad\circ(\mathrm{id}_V^{\otimes r_1+\cdots+s_{p_1}}\otimes \beta_{r_{p_1+1}+\cdots+s_{p_1+p_2},q_1}\otimes\mathrm{id}_V^{\otimes q_2})\\[3pt]
&&\quad\quad\circ(\bigtriangleup_{\beta\
(r_1+\cdots+r_{p_1},s_1+\cdots+s_{p_1},
r_{p_1+1}+\cdots+r_{p_1+p_2},s_{p_1+1}+\cdots+s_{p_1+p_2})}\otimes \mathrm{id}_V^{\otimes k})\\[3pt]
&=&\ast(\ast\otimes \mathrm{id}_V)(\cdot^{i-1}\otimes\cdot^{j-1}\otimes \cdot^{k-1})\\[3pt]
&&-\sum_{r\geq 1}\sum_{\underline{2}}\cdot\big(\ast(\cdot^{p_1-1}\otimes\cdot^{q_1-1})\otimes M_{p_2,q_2}\big) \\[3pt]
&&\quad\quad\circ(\sum M_{(r_1,s_1,\ldots,
r_{p_1},s_{p_1})}\bigtriangleup_{\beta\ (r_1,s_1,\ldots,
r_{p_1},s_{q_1})}^{(p_1-1)}\otimes\mathrm{id}_V^{\otimes q_1}\\[3pt]
&&\quad\quad\quad\quad\otimes M_{(r_{p_1+1},s_{p_1+1},\ldots,
r_{p_1+p_2},s_{p_1+p_2})}\bigtriangleup_{\beta \
(r_{p_1+1},s_{p_1+1},\ldots,
r_{p_1+p_2},s_{p_1+p_2})}^{(p_2-1)}\otimes\mathrm{id}_V^{\otimes q_2})\\[3pt]
&&\quad\quad\circ \bigtriangleup_{\beta,3,
(r_1+\cdots+r_{p_1},s_1+\cdots+s_{p_1},q_1,
r_{p_1+1}+\cdots+r_{p_1+p_2},s_{p_1+1}+\cdots+s_{p_1+p_2},q_2)}\\[3pt]
&=&\ast(\ast\otimes \mathrm{id}_V)(\cdot^{i-1}\otimes\cdot^{j-1}\otimes \cdot^{k-1})\\[3pt]
&&-\cdot\sum_{p+q+r< i+j+k}\big(\ast(\ast\otimes \mathrm{id}_V)(\cdot^{i-p-1}\otimes\cdot^{j-q-1}\otimes \cdot^{k-r-1})\\[3pt]
&&\quad\quad\quad\quad\otimes \sum_{s\geq 1}M_{sr}\circ
\big((M^{\otimes s}\circ \bigtriangleup_\beta^{(s-1)})\otimes
\mathrm{id}_V^{\otimes s}\big)\circ \bigtriangleup_{\beta,3, (i-p,j-q,k-r, p,q,r)}\\[3pt]
&=&\ast(\mathrm{id}_V\otimes \ast)(\cdot^{i-1}\otimes\cdot^{j-1}\otimes \cdot^{k-1})\\[3pt]
&&-\cdot\sum_{p+q+r< i+j+k}\big(\ast(\mathrm{id}_V\otimes \ast)(\cdot^{i-p-1}\otimes\cdot^{j-q-1}\otimes \cdot^{k-r-1})\\[3pt]
&&\quad\quad\quad\quad\otimes \sum_{s\geq 1}M_{ps}\circ
\big(\mathrm{id}_V^{\otimes p}\otimes( M^{\otimes s}\circ
\bigtriangleup_\beta^{(s-1)})\big)\circ \bigtriangleup_{\beta,3, (i-p,j-q,k-r, p,q,r)}\\[3pt]
&=&\sum_{l=1}^{j+k}M_{il}\circ\big(\mathrm{id}_V^{\otimes
i}\otimes (M^{\otimes l}\circ \bigtriangleup_\beta^{(l-1)})\big),
\end{eqnarray*}
where the third equality follows from the inductive hypothesis and
the associativity of $\ast$. Here
$\bigtriangleup_{\beta,3}=T^\beta_{w_3^{-1}}\circ \delta^{\otimes
3}$, and $\bigtriangleup_{\beta,3, (i,j,k,l,m,n)}$ is denoted by
the composition of $\bigtriangleup_{\beta,3}: V^{\otimes
i+k}\underline{\otimes}V^{\otimes
j+m}\underline{\otimes}V^{\otimes l+n}\rightarrow
T(V)^{\underline{\otimes} 6}$ with the projection from
$T(V)^{\underline{\otimes} 6}$ to $V^{\otimes
i}\underline{\otimes}V^{\otimes j}\underline{\otimes}V^{\otimes
k}\underline{\otimes}V^{\otimes l}\underline{\otimes}V^{\otimes
m}\underline{\otimes}V^{\otimes n}$.
\end{proof}

Let $A_{2-\mathrm{braided}}$ be the category of unital 2-braided
algebras and $A_{\mathcal{QMB}}$ be the category of quantum
multi-brace algebras. By the above proposition, we get a
functor\begin{eqnarray*} (-)_{QMB}:
A_{2-\mathrm{braided}}&\rightarrow& A_{\mathcal{QMB}},
\end{eqnarray*}by $(V)_{QMB}=(V, M,\sigma)$, where $M$ is the quantum multi-brace algebra constructed from $(V,\ast,\cdot,\sigma)$, for any $(V,\ast,\cdot,\sigma)\in A_{2-\mathrm{braided}}$.

By the above proposition, we have immediately that:
\begin{corollary}Let $(V,M,\sigma)$ be a quantum multi-brace algebra and $(T(V),\ast,m,\beta)$ be the 2-braided algebra with product $\ast=\varepsilon\otimes \varepsilon+\sum_{n\geq 1}M^{\otimes n}\bigtriangleup_\beta^{(n-1)}$ and $m$ the concatenation. Then the inclusion $i:V\rightarrow T(V)$ is a quantum multi-brace algebra morphism, i.e., $i\circ M_{pq}=\overline{M}_{pq}\circ(i^{\otimes p}\otimes i^{\otimes q})$, for any $p,q\geq 0$. Here $\overline{M}_{pq}$ is the quantum multi-brace algebra structure on $T(V)$ defined above.\end{corollary}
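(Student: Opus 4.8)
The plan is to prove $i\circ M_{pq}=\overline{M}_{pq}\circ(i^{\otimes p}\otimes i^{\otimes q})$ by induction on $p+q$, unwinding the recursion of Proposition 5.2. For the set-up: by Theorem 4.15 $(T(V),\ast,\beta)$ is a YB algebra and $(T(V),m,\beta)$ is the YB algebra with the concatenation product $m$; the two products share the unit $1\in V^{\otimes 0}$, so $(T(V),\ast,m,\beta)$ is a unital 2-YB algebra and Proposition 5.2 produces the quantum $\textbf{B}_\infty$-algebra $(T(V),\overline{M},\beta)$, where $\overline{M}_{pq}$ is the $\ast$-product of the two $m$-concatenations minus $\sum_{k=2}^{p+q}\sum_{I_k,J_k}m^{k-1}\,\overline{M}_{(i_1,j_1,\ldots,i_k,j_k)}\circ\overline{\bigtriangleup_{\widehat\beta}}_{(i_1,j_1,\ldots,i_k,j_k)}^{(k-1)}$, with $\widehat\beta=T^{\beta}$ the induced braiding of $T^c(T(V))$. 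The cases $p=0$ or $q=0$ are immediate base cases, because in that range the only nonzero structure maps are $M_{10}=\mathrm{id}_V=M_{01}$ and $\overline{M}_{10}=\mathrm{id}_{T(V)}=\overline{M}_{01}$, and $i\circ\mathrm{id}_V=\mathrm{id}_{T(V)}\circ i$.

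For the inductive step, fix $p,q\geq 1$, assume the identity for all smaller $p'+q'$, and put $\mathbf v=v_1\otimes\cdots\otimes v_p\in V^{\otimes p}$, $\mathbf w=w_1\otimes\cdots\otimes w_q\in V^{\otimes q}$. Evaluating $\overline{M}_{pq}$ on the degree-one letters $i(v_1),\ldots,i(v_p),i(w_1),\ldots,i(w_q)$, the leading term is $(i(v_1)\,m\cdots m\,i(v_p))\ast(i(w_1)\,m\cdots m\,i(w_q))=\mathbf v\ast\mathbf w$, the product taken in $(T(V),\ast)$. Using $\ast=\varepsilon\otimes\varepsilon+\sum_{n\geq 1}M^{\otimes n}\circ\overline{\bigtriangleup_\beta}^{(n-1)}$ and splitting $\overline{\bigtriangleup_\beta}^{(n-1)}$ into its bigraded components (which, being reduced and by Proposition 4.2.3 together with Proposition 4.5, are indexed by compositions $(r_1,s_1,\ldots,r_n,s_n)$ of $(p,q)$ with every $r_a+s_a\geq 1$ and $1\leq n\leq p+q$), one gets $\mathbf v\ast\mathbf w=\sum_{n,(r_\bullet,s_\bullet)}M_{(r_1,s_1,\ldots,r_n,s_n)}\circ\overline{\bigtriangleup_\beta}_{(r_1,s_1,\ldots,r_n,s_n)}^{(n-1)}(\mathbf v\,\underline{\otimes}\,\mathbf w)$. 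The $n=1$ term is $M(\mathbf v\,\underline{\otimes}\,\mathbf w)=M_{pq}(\mathbf v\,\underline{\otimes}\,\mathbf w)\in V^{\otimes1}\subset T(V)$, i.e. it is $i\bigl(M_{pq}(\mathbf v\,\underline{\otimes}\,\mathbf w)\bigr)$. So it remains to match the $n\geq 2$ part of $\mathbf v\ast\mathbf w$ with the subtracted sum in the recursion for $\overline{M}_{pq}$.

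This is done termwise, over compositions with $n=k\geq 2$. The hinge is the coproduct compatibility
$$\overline{\bigtriangleup_{\widehat\beta}}_{(r_1,s_1,\ldots,r_k,s_k)}^{(k-1)}\circ(i^{\otimes p}\otimes i^{\otimes q})=\bigl(i^{\otimes r_1}\otimes i^{\otimes s_1}\otimes\cdots\otimes i^{\otimes r_k}\otimes i^{\otimes s_k}\bigr)\circ\overline{\bigtriangleup_\beta}_{(r_1,s_1,\ldots,r_k,s_k)}^{(k-1)}.$$
Granting it: since $k\geq 2$, every pair satisfies $r_a+s_a<p+q$, so the induction hypothesis gives $\overline{M}_{r_as_a}\circ(i^{\otimes r_a}\otimes i^{\otimes s_a})=i\circ M_{r_as_a}$, hence $\overline{M}_{(r_1,s_1,\ldots,r_k,s_k)}\circ(i^{\otimes r_1}\otimes\cdots\otimes i^{\otimes s_k})=i^{\otimes k}\circ M_{(r_1,s_1,\ldots,r_k,s_k)}$; and because $m^{k-1}\circ i^{\otimes k}$ is the canonical inclusion $V^{\otimes k}\hookrightarrow T(V)$ of the degree-$k$ component, the $k$-th term of the subtracted sum equals the degree-$k$ part of $\mathbf v\ast\mathbf w$. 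Summing over $k\geq 2$ and subtracting from $\mathbf v\ast\mathbf w$ leaves precisely the degree-one part $i\bigl(M_{pq}(\mathbf v\,\underline{\otimes}\,\mathbf w)\bigr)$, which is what we want. (The index sets on the two sides agree: the $(0,0)$-indexed pieces are absent from $\overline{\bigtriangleup_\beta}$, being reduced, and are killed on the other side by $\overline{M}_{00}=0=M_{00}$.)

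The step I expect to be the real work — the main obstacle — is the displayed coproduct identity: that the reduced, twisted, iterated coproduct of $T^c(T(V))$ restricts, on words all of whose letters lie in $V^{\otimes1}\subset T(V)$, to the corresponding coproduct of $T^c(V)$. I would deduce it from two observations and then iterate: (i) the deconcatenation of $T^c(T(V))$ sends a word of degree-one letters to the deconcatenation of the corresponding word of $T^c(V)$; and (ii) the induced braiding $\widehat\beta=T^{\beta}$ of $T(T(V))$, evaluated on a tensor of degree-one letters, applies $\beta$ only between degree-one factors, where $\beta=\beta_{11}=\sigma$, so each lift $T^{\widehat\beta}_w$ restricts to $T^{\sigma}_w$ on such words. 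Feeding (i) and (ii) into $\bigtriangleup_{\widehat\beta}=(\mathrm{id}\otimes\widehat\beta\otimes\mathrm{id})(\delta\otimes\delta)$, passing to the reduced coproduct and iterating, yields the identity; everything else is routine bookkeeping with the bigrading.
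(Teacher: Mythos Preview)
Your argument is correct and is exactly the verification that the paper leaves implicit: the paper only says ``By the above proposition, we have immediately that'' and gives no further details, so what you wrote is the natural unwinding of Proposition~5.2. Two small remarks: the YB-algebra result you cite as ``Theorem 4.15'' is Theorem~4.16 in the paper's numbering (4.15 is the preceding proposition on the compatibility of $\ast$ with $\beta$); and your ``main obstacle'' --- that the reduced twisted iterated coproduct of $T^c(T(V))$, evaluated on words of degree-one letters, agrees with that of $T^c(V)$ --- is indeed straightforward for precisely the reasons you give (deconcatenation of degree-one words is deconcatenation, and $\widehat\beta$ between degree-one letters is $\beta_{11}=\sigma$), so there is no hidden difficulty there.
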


\section*{Acknowledgements}
The first author is grateful to Prof. Yi Zhang for his many helps
during last six years. Both of the authors would like to thank the
referee for careful reading and useful comments. In particular,
the terminology "multi-brace algebra" is suggested by the referee.
The first author was partially supported by a CNRS fellowship from
L'Ambassade de France en Chine, NSF of China (Grant No. U0935003),
and the grant for new teachers from DGUT (Grant No. ZJ100501).
\bibliographystyle{amsplain}

\end{document}